\documentclass[a4paper,10pt,reqno]{amsart}

\usepackage[UKenglish]{babel}
\usepackage{amsmath}
\usepackage{amssymb}
\usepackage{amsthm}
\usepackage{mathtools}
\usepackage{verbatim}
\usepackage{stackrel}
\usepackage[arrow, matrix, curve]{xy} %for diagrams of implications

\usepackage[shortlabels]{enumitem}

\usepackage{comment,cite}	
\usepackage{hyperref}
\hypersetup{
    colorlinks=true,
    linkcolor=blue,
    citecolor=red,
    filecolor=magenta,      
    urlcolor=cyan,
    bookmarks=true,
    linktocpage=true,
}

\usepackage[utf8]{inputenc}

\usepackage{amssymb}
\usepackage{amsmath}
\usepackage{amsthm}
\usepackage{bbm}
\usepackage{verbatim, stackrel}

\usepackage[shortlabels]{enumitem}
\usepackage{marginnote}
\usepackage{color}

\usepackage{stmaryrd}
\usepackage{tikz}
\usepackage{tikz-cd}
\usepackage{adjustbox}
\usepackage{float}

\usepackage{tabu,multirow}
\usepackage{makecell}
\usepackage{hhline}

\usepackage{todonotes}

\newcommand{\bbC}{\mathbb{C}}
\newcommand{\bbD}{\mathbb{D}}

\newcommand{\bbK}{\mathbb{K}}

\newcommand{\bbN}{\mathbb{N}}

\newcommand{\bbR}{\mathbb{R}}

\newcommand{\bbT}{\mathbb{T}}

\newcommand{\bbZ}{\mathbb{Z}}

\newcommand{\calL}{\mathcal{L}}

\newcommand{\tmatrix}[1]{\left(\begin{smallmatrix}#1\end{smallmatrix}\right)}

\DeclareMathOperator{\FBL}{FBL}
\DeclareMathOperator{\FBLC}{FBL_\bbC}

\DeclareMathOperator{\id}{id} % identity operator
\DeclareMathOperator{\one}{{\mathbbm{1}}} % constant function with value one
\DeclareMathOperator{\re}{Re} % real part
\newcommand{\argument}{\mathord{\,\cdot\,}} % argument dot for functions (with correct spacing)
\newcommand{\dx}{\;\mathrm{d}} % differential (for use at the end of integrals)
\newcommand{\norm}[1]{\left\lVert #1 \right\rVert} % norm
\newcommand{\modulus}[1]{\left\lvert #1 \right\rvert} % modulus
\newcommand{\iu}{{\mathrm i}} % imaginary unit

\newcommand{\spec}{\sigma} % spectrum

\newcommand{\appSpec}{\spec_{\operatorname{app}}} % approximate point spectrum
\newcommand{\pntSpec}{\spec_{\operatorname{pnt}}} % point spectrum
\newcommand{\spr}{r} % spectral radius
\newcommand{\pntSpr}{\spr_{\operatorname{pnt}}} 
 
\newcommand{\spri}{q}
\newcommand{\pntSpri}{\spri_{\operatorname{pnt}}} 
\newcommand{\appSpri}{\spri_{\operatorname{app}}} 
\theoremstyle{definition}
\newtheorem{definition}{Definition}[section]
\newtheorem{convention}[definition]{Convention}
\newtheorem{remark}[definition]{Remark}

\newtheorem*{remark*}{Remark}
\newtheorem*{remarks*}{Remarks}
\newtheorem{example}[definition]{Example}
\newtheorem{examples}[definition]{Examples}

\theoremstyle{plain}
\newtheorem{proposition}[definition]{Proposition}
\newtheorem{lemma}[definition]{Lemma}
\newtheorem{theorem}[definition]{Theorem}
\newtheorem{corollary}[definition]{Corollary}

\newtheorem*{duplicate*}{Theorem~\ref{thm:spec-char-general}}

\numberwithin{equation}{section} % enumerate formulas within sections

\begin{document}

\title{The spectrum of operator extensions to free Banach Lattices}
\author{Jochen Glück}
\address[J.\ Glück]{University of Wuppertal, School of Mathematics and Natural Sciences, Gaußstr.\ 20, 42119 Wuppertal, Germany}
\email{glueck@uni-wuppertal.de}

\author{Phillip Krokor}
\address[P.\ Krokor]{University of Wuppertal, School of Mathematics and Natural Sciences, Gaußstr.\ 20, 42119 Wuppertal, Germany}
\email{krokor@uni-wuppertal.de}
\subjclass[2010]{}
\keywords{}
\date{\today}
\begin{abstract} 
    Every bounded linear operator $T$ on a complex Banach space $E$ is known to extend to a lattice homomorphism $\overline{T}$ that acts on the so-called complex free Banach lattice over $E$. 
    We show the following three results about the spectrum $\sigma(\overline{T})$ of $\overline{T}$: 
    (i)~$\sigma(\overline{T})$ always contains the spectrum $\sigma(T)$;
    this answers a recent question of de Hevia and Tradacete. 
    (ii)~It can happen that $\sigma(T)$ is a singleton while $\sigma(\overline{T})$ is the entire unit circle;
    This shows that $\spec(\overline{T})$ is not the closure of the cyclic hull of $\sigma(T)$ in general.
    (iii)~Finally, we give a full characterization of $\sigma(\overline{T})$ in terms of the spectral properties of $T$.

    Some of our arguments also give new results about the spectral properties of general lattice homomorphisms. 
    As a main tool we make extensive use of the Banach lattice functional calculus for continuous positively homogeneous functions.
\end{abstract}

\maketitle

\section{Introduction}

\subsection*{Spectral theory and cyclicity}

Inspired by the classical theorems of Perron and Frobenius an intricate spectral theory has been developed for positive operators on Banach lattices over many decades. 
The notion of a cyclic subset of $\bbC$ is particularly relevant in this theory: 
a set $S \subseteq \bbC$ is called \emph{cyclic} if for all $r \in (0,\infty)$ and $\theta \in \bbR$ the property $re^{\iu \theta}\in S$ implies that $r e^{\iu n \theta} \in S$ for all $n \in \bbZ$. 
Note that, in particular, a subset of the complex unit circle $\bbT$ is cyclic if and only if it is a union of multiplicative subgroups of $\bbT$. 
It is a classical result of Scheffold that the spectrum $\spec(T)$ of every lattice homomorphism $T \in \calL(E)$ on a complex Banach lattice $E$ is cyclic (see \cite[Theorem~2.3]{Scheffold1971} or, for a reference in English, \cite[Theorem~V.4.4 on p.\,325]{Schaefer1974}). 

If $T$ is only a positive operator rather than a lattice homomorphism, the so-called \emph{peripheral spectrum} $\{\lambda \in \spec(T) \mid \modulus{\lambda} = \spr(T)\}$, where $\spr(T)$ denote the spectral radius of $T$, is still known to be cyclic under various assumptions; see e.g. \cite[Theorem~V.4.9 and its Corollary on p.\,327--328]{Schaefer1974}, \cite[Satz~3.6]{Scheffold1971}, \cite[Section~2.2]{Zhang1991} and \cite[Theorem~7.1]{Glueck2016a}. 
It is an open problem whether the cyclicity of the peripheral spectrum holds for all positive operators on Banach lattices without additional assumptions; see \cite{Glueck2018} for a detailed discussion of this problem.

This paper is a contribution to the spectral theory on a special class of Banach lattices, the so-called \emph{free Banach lattices}.

\subsection*{Free Banach lattices over $\bbR$}

Free Banach lattices generated by a Banach space are a  concept developed by Avilés, Rodríguez and Tradacete in 2018 \cite{AvilesRodriguezTradacete2018}, motivated by free Banach lattices over a set \cite{PagterWickstreadAnthony2015} and by the much earlier concept of a free vector lattice \cite{Baker1968, Bleier1973}.  

As is common for free objects, free Banach lattices are defined by a universal property.
Let $E$ be a real Banach lattice. 
The free Banach lattice generated by $E$ is a Banach lattice $\FBL[E]$ together with an isometric linear embedding $\delta_E:E\hookrightarrow \FBL[E]$ such that the following universal property holds: 
for every Banach lattice $X$ and every bounded linear operator $T:E\to X$ there exists a unique Banach lattice homomorphism $\hat T: \FBL[E]\to X$ such that $\norm{T} = \norm{\hat T}$ and $T = \hat T \circ\delta_E$.
The $\FBL[E]$ can be shown to exist and to be unique up to isometric isomorphism. 
As a consequence, for every bounded linear operator $T: E \to E$ there exists a unique Banach lattice homomorphism $\overline{T}: \FBL[E] \to \FBL[E]$ that satisfies $\delta_E T = \overline{T}\delta_E$.

The theory of free Banach lattices has developed quickly and by now contains results about its structural representation \cite[Theorem 2.5 on p.\,2958]{AvilesRodriguezTradacete2018}, p-convexity \cite[Theorem 2.1 on p.\,17]{OikhbergTaylorTradaceteTroitsky2022}, its dual space \cite[Theorem 3.4 on p.\,8]{GarciaSanchezTradacete2024}, and a generalization to free Banach lattices over pre-ordered Banach spaces \cite[Theorem~6.9 on p.\,30]{DeJeuJiang2026} among other topics.

\subsection*{Free Banach lattices over $\bbC$}

As the complex scalar field is more suitable for spectral theory than the real one, it is natural to also develop a theory of free complex Banach lattices, as was recently done in \cite{deHeviaTradacete2023}.

Let $E$ be a complex Banach space. 
The free complex Banach lattice generated by $E$ is a complex Banach lattice $\FBL_\bbC[E]$ together with a $\bbC$-linear isometric embedding $\delta_E: E\hookrightarrow \FBL_\bbC[E]$ such that, similarly to the real case, the following universal property holds: 
for every complex Banach lattice $X_\bbC$ and every bounded $\bbC$-linear operator $T:E\to X_\bbC$ there is a unique Banach lattice homomorphism $\hat T: \FBL_\bbC[E]\to X_\bbC$ such that $T = \hat T \circ \delta_E$ and $\norm{\hat T} = \norm{T}$. 
As in the real case, it follows that for every bounded linear operator $T: E \to E$ there exists a unique Banach lattice homomorphism $\overline{T}: \FBL_\bbC[E] \to \FBL_\bbC[E]$ that satisfies $\delta_E T = \overline{T}\delta_E$, and this operator also satisfies $\norm{\overline{T}} = \norm{T}$; 
see the preliminaries at the end of the introduction for a presentation of this situation as a commutative diagram.

The Banach lattice $\FBL_\bbC[E]$ can be constructed as $\FBL_\bbC[E] \coloneqq \FBL[E_\bbR]\oplus i\FBL[E_\bbR]$ and $\delta_E\coloneqq \delta_{E_\bbR} - i\delta_{E_\bbR}$ where $E_\bbR$ is just $E$ interpreted as a real space ignoring the complex structure (see \cite[Theorem~3.3]{deHeviaTradacete2023}).

\subsection*{Contributions}

For a bounded linear operator $T$ on a complex Banach space $E$, we study the spectrum $\spec(\overline{T})$ of the lattice homomorphism $\overline{T}: \FBL_\bbC[E] \to \FBL_\bbC[E]$ and its relation to $\spec(T)$.
Questions of this type were first studied by de Hevia and Tradacete who asked whether the inclusion $\spec(T) \subseteq \spec(\overline{T})$ always holds and proved several partial results on this question \cite[Section~6]{deHeviaTradacete2023}. 

We settle this question in Section~\ref{sec:spectral-inclusion} where we show that the inclusion $\spec(T) \subseteq \spec(\overline{T})$ is indeed always true (Theorem~\ref{thm:spectral-inclusion}).
Naturally, one will asked next whether it then follows that $\spec(\overline{T})$ is the smallest closed cyclic subset of $\bbC$ that contains $\spec(T)$. 

In Section~\ref{sec:bigger-spectrum} we give a concrete counterexample to this question: 
if $T$ is a two-dimensional Jordan block with spectrum $\{1\}$, we show that $\spec(\overline{T})$ is the unit circle (Example~\ref{exa:counterexample-spectrum}).

Section~\ref{sec:functional-calculus} is an intermezzo on the multivariate functional calculus on Banach lattices for positive homogeneous functions. 
The construction of this functional calculus by means of the Kakutani representation theorem for AM-spaces with unit is explained on several occasions in the literature, but we could not find a reference that contains all the properties that we need in the sequel. 

As a first illustration of our usage of the functional calculus, we show in Section~\ref{sec:lattice-homomorphisms-non-semi-simple} how Example~\ref{exa:counterexample-spectrum} can be generalized to become a theorem for the spectrum of lattice homomorphisms.

In Section~\ref{sec:shape-of-spec-overline-T} we then come back to free Banach lattices and show that the point spectrum $\pntSpec(\overline{T})$ always contains a very specific sun-like shape and has quite strong algebraic closed properties (Theorems~\ref{thm:pnt-spec-sun-like} and~\ref{thm:pnt-spec-circle-subgroup}). 
As a consequence we next give a full description of $\spec(\overline{T})$ in the case when $T$ is essentially a root of unity (see below for a definition) in Theorem~\ref{thm:roots-of-unity-general} in Section~\ref{sec:spec-overline-ess-root}.

We turn to the approximate point spectrum in Section~\ref{sec:app-point-spec-overline-T} and show that if $\appSpec(T)$ contains two approximate eigenvalues $\lambda_1,\lambda_2$ such that $0<\modulus{\lambda_1}<\modulus{\lambda_2}$ then $\appSpec(\overline{T})$ already contains the ring of complex numbers $\lambda\in\bbC$ that satisfy $\modulus{\lambda_1}<\modulus{\lambda}<\modulus{\lambda_2}$ (Theorem~\ref{thm:dounut-from-eigenvalues}\ref{thm:dounut-from-eigenvalues:itm:approx}).

Finally in Section~\ref{sec:spec-char} we present a full characterization of the spectrum of $\overline{T}$ for a given bounded linear operator $T$ (Theorem~\ref{thm:spec-char-general}). To formulate this characterization we need the following terminology. 

We call a bounded linear operator $T$ on a complex Banach space $E$ a \emph{root of unity} if there is an $n\in\bbN$ such that $T^n$ is the identity operator on $E$. We say $T$ is \emph{essentially a root of unity} if there is an $n\in\bbN$ such that $T^n$ is a projection. Also we say $T$ is a \emph{proper essential root of unity} if $T$ is essentially a root of unity, but not a root of unity. 
We further characterize these properties in Lemma~\ref{lem:spectral-decomposition}\ref{lem:spectral-decomposition:itm:id} and Proposition~\ref{prop:hom-root-full-circle}. 

If $\spec(T) = \{0\}$ we know $\spec(\overline{T}) = \{0\}$ since the spectral radii of $T$ and $\overline{T}$ always satisfy $\spr(T) = \spr(\overline{T})$, so we only consider the case $\spr(T)>0$. 
Moreover, we assume that $\spr(T) = 1$, since otherwise we can simply rescale $T$. 
We use the notation 
\begin{align*}
    \spri(T)\coloneqq\inf\{\modulus{\lambda}\mid\lambda\in\spec(T)\setminus\{0\}\}
\end{align*}
as an ``inner analogue'' of the spectral radius. 
For further notation used in the following theorem, see Section~\ref{sec:prelim}.

Some finite dimensional examples that illustrate the following result can be seen in Figure~\ref{fig:spectra} at the end of the paper.

\begin{duplicate*}
    Let $E$ be a complex Banach space and  
    let $T \colon E \to E$ be a bounded linear operator with spectral radius $\spr(T) = 1$. 
    The spectrum of $\overline{T}$ can be described by the following cases. 
    \begin{center}
        \begin{adjustbox}{max width=\textwidth}
            \begin{tabu}{|c||c|c|}
                \hline
                & 
                \parbox[c][1.5cm]{0.38\textwidth}{$T$ is essentially a root of unity} & 
                \parbox[c][1.5cm]{0.38\textwidth}{$T$ is not essentially a root of unity} \\ 
                %\tabucline[1.5pt]{1-4}
                \hhline{|=||=|=|}
                \parbox[c][1.5cm]{0.14\textwidth}{$0\not\in\spec(T)$} & 
                \parbox[c][1.5cm]{0.38\textwidth}{$\spec(\overline{T})$ is the multiplicative subgroup of $\bbT$ generated by $\spec(T)$} &
                $\spec(\overline{T}) = [\spri(T),1]\,\bbT$ \\ 
                \cline{1-3}
                \multirow{2}{*}{\parbox[c][1.5cm]{0.14\textwidth}{$0\in\spec(T)$}} & 
                \multirow{2}{*}{\parbox[c][3cm]{0.38\textwidth}{$\spec(\overline{T})$ consists of $0$ and the multiplicative subgroup of $\bbT$ generated by $\spec(T)\setminus\{0\}$ }} & 
                \parbox[c][1.5cm]{0.38\textwidth}{if $0$ is pole of the resolvent of $T$: $\spec(\overline{T}) = [\spri(T),1]\,\bbT\cup\{0\}$} \\
                \cline{3-3}
                & & 
                \parbox[c][1.5cm]{0.38\textwidth}{if $0$ is not a pole of the resolvent of $T$: $\spec(\overline{T}) = \overline{\bbD}$} \\
                \hline
            \end{tabu}
        \end{adjustbox}
    \end{center}
    \vspace{0.25cm}
\end{duplicate*}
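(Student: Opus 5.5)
The plan is to assemble the theorem from the partial results announced in the introduction, treating the two columns of the table separately and using $\spr(\overline{T}) = \spr(T) = 1$ throughout to get the outer bound $\spec(\overline{T}) \subseteq \overline{\bbD}$.

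\emph{Left column.} Suppose $T$ is essentially a root of unity, so $T^n = P$ is a projection for some $n$. I would take the full description from Theorem~\ref{thm:roots-of-unity-general}. To see the shape, decompose $E = \operatorname{ran} P \oplus \ker P$ as in Lemma~\ref{lem:spectral-decomposition}. On $\operatorname{ran}P$ the operator is a root of unity, so its spectrum is a finite set of roots of unity; on $\ker P$ it is nilpotent. Since $\overline{T}^n = \overline{T^n} = \overline{P}$ is a lattice homomorphism and an idempotent, $\spec(\overline{T})^n \subseteq \{0,1\}$. Hence $\spec(\overline{T})$ consists of roots of unity of order dividing $n$, together with $0$ precisely when $P \neq \id$, i.e.\ when $0 \in \spec(T)$.

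The precise group comes from two facts. Theorem~\ref{thm:spectral-inclusion} gives $\spec(T) \subseteq \spec(\overline{T})$. Theorem~\ref{thm:pnt-spec-circle-subgroup} gives closure of $\pntSpec(\overline{T}) \cap \bbT$ under products and inverses. Together these yield that the subgroup generated by $\spec(T)\setminus\{0\}$ lies in $\spec(\overline{T})$. For the reverse inclusion I would use the restriction of $\overline{T}$ to the band or ideal generated by $\delta_E(\operatorname{ran}P)$, where $\overline{T}^m = \id$ for $m$ the order of the group. This part is the content of Theorem~\ref{thm:roots-of-unity-general}.

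\emph{Right column.} Suppose $T$ is not essentially a root of unity. The first step is to show that the spectrum is circular: $\spec(\overline{T}) \supseteq \modulus{\lambda}\bbT$ for every nonzero $\lambda \in \spec(T)$. I would obtain this from the sun-like shape in Theorem~\ref{thm:pnt-spec-sun-like} together with Proposition~\ref{prop:hom-root-full-circle}, which states that a lattice homomorphism that is not essentially a root of unity has full circles in its spectrum. Next, pick $\lambda_1, \lambda_2 \in \partial\spec(T) \subseteq \appSpec(T)$ with $\modulus{\lambda_1}$ close to $\spri(T)$ and $\modulus{\lambda_2} = 1$. Theorem~\ref{thm:dounut-from-eigenvalues} then fills in the open annulus between them, and closedness gives $[\spri(T),1]\,\bbT \subseteq \spec(\overline{T})$. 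In the degenerate case $\spri(T)=1$ the circle statement alone suffices.

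For the reverse inclusion when $0 \notin \spec(T)$, I would use that $T$ is invertible. Then $\overline{T}$ is invertible with $\overline{T}^{-1} = \overline{T^{-1}}$, and $\spr(\overline{T^{-1}}) = \spr(T^{-1}) = 1/\spri(T)$, which confines $\spec(\overline{T})$ to the annulus.

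When $0$ is a pole of the resolvent, split $E = E_0 \oplus E_1$ with $T|_{E_0}$ nilpotent and $T|_{E_1}$ invertible. The free lattice construction is not additive over direct sums, so I cannot simply restrict. Instead, for $0 < \modulus{\mu} < \spri(T)$, I would build the resolvent of $\overline{T}$ by hand. The idea is to use the functional calculus of Section~\ref{sec:functional-calculus} to write $\FBL_\bbC[E]$ as functions on $E^*$ and to combine a Neumann series in $\overline{T}/\mu$ on the part governed by the nilpotent component with one in $\mu\,\overline{T_1^{-1}}$ on the invertible part.

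When $0$ is not a pole, I must show $\overline{\bbD} \subseteq \spec(\overline{T})$. Here $0$ is either an accumulation point of $\spec(T)$, so $\spri(T)=0$ and the annulus argument already yields everything, or $0$ is an isolated essential singularity. In the latter case the spectral subspace for $\{0\}$ carries a quasinilpotent but non-nilpotent operator. I would produce approximate eigenvectors $x_k$ with $\norm{T x_k} \le \varepsilon_k \norm{x_k}$ and $T x_k \neq 0$ for arbitrarily small ratios. Feeding pairs $(0,\lambda)$ into the mechanism of Theorem~\ref{thm:dounut-from-eigenvalues}, with $0$ treated as a limit of small approximate eigenvalues, should fill the disc.

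\emph{Main obstacles.} The hardest step is the last reverse/forward pair: showing that the open disc $\{0<\modulus{\mu}<\spri(T)\}$ lies in the resolvent set exactly when $0$ is a pole. Both directions require understanding $\overline{T}$ on functions mixing the nilpotent and invertible parts of $T$, where no direct sum decomposition of $\FBL_\bbC[E]$ is available.
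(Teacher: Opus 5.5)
Your overall architecture matches the paper's. You get $\spec(\overline{T})\subseteq\overline{\bbD}$ from $\spr(\overline{T})=\spr(T)$, the annulus bound via $\overline{T}^{-1}=\overline{T^{-1}}$ for invertible $T$, annuli filled by Theorem~\ref{thm:dounut-from-eigenvalues}\ref{thm:dounut-from-eigenvalues:itm:approx}, and the left column by citing Theorem~\ref{thm:roots-of-unity-general}. However, the three steps that decide the right-column cells with $0\in\spec(T)$ are either missing or would fail.

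\textbf{Upper bound when $0$ is a pole.} You only announce a resolvent ``by hand'' and claim that no decomposition of $\FBLC[E]$ is available; that is exactly the missing idea. The map $A\mapsto\overline{A}$ is not additive, but it is multiplicative. Let $Q$ be the spectral projection at $0$, $P=\id_E-Q$, $T^nQ=0$, and $S=PT+\lambda_0Q$ with $\lambda_0\in\spec(T)\setminus\{0\}$. Then $S$ is invertible with $\spec(S)=\spec(T)\setminus\{0\}$, and $TP=PT=SP=PS$, $T^n=T^nP$. Lifting these identities gives three facts: the idempotent $\overline{P}$ commutes with $\overline{T}$ and $\overline{S}$; $\overline{T}\,\overline{P}=\overline{S}\,\overline{P}$; and $\overline{T}^n(\id-\overline{P})=0$, even though $\id-\overline{P}\neq\overline{Q}$ in general. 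So $\overline{T}$ is nilpotent on $\ker\overline{P}$ and equals $\overline{S}$ on $\operatorname{ran}\overline{P}$. Hence $\spec(\overline{T})\setminus\{0\}\subseteq\spec(\overline{S})\subseteq[\spri(T),1]\,\bbT$ by Lemma~\ref{lem:spectral-radius}. This is the content of Lemmas~\ref{lem:zero-pole-shift} and~\ref{lem:dounut-empty-general}; the paper phrases it via $\lambda^n-\overline{T}^n$ and Lemma~\ref{lem:power-not-in-spec}.

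The same lemma proves the reverse inclusion in Theorem~\ref{thm:roots-of-unity-general}\ref{thm:roots-of-unity-general:itm:proj}, and your suggested substitute there is wrong. Take nonzero $x\in\operatorname{ran}P$ and $y\in\ker P$. Then $\modulus{\delta_E(x)}\wedge\modulus{\delta_E(y)}$ lies in the ideal generated by $\delta_E(\operatorname{ran}P)$ and is nonzero (Lemma~\ref{FBLDisjoinedness}), but it is annihilated by $\overline{T}^m$ for large $m$. The correct subspace is $\operatorname{ran}\overline{P}$.

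In the degenerate subcase $\spec(T)\setminus\{0\}\subseteq\bbT$ you also need this upper bound first. Proposition~\ref{prop:hom-root-full-circle}\ref{prop:hom-root-full-circle:itm:proj} yields $\bbT\subseteq\spec(\overline{T})$ only once $\spec(\overline{T})\setminus\{0\}\subseteq\bbT$ is known. It does not give full circles for arbitrary non-roots of unity, and Theorem~\ref{thm:pnt-spec-sun-like} only produces rays of eigenvalues. So your ``first step'' is unjustified, though dispensable.

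\textbf{Lower bound at an isolated essential singularity.} Your mechanism cannot work. Near an isolated $0$ there are no nonzero approximate eigenvalues of $T$, and Theorem~\ref{thm:dounut-from-eigenvalues} needs $\modulus{\lambda_1}>0$. More decisively, vectors with $\norm{Tx_k}\le\varepsilon_k\norm{x_k}$ and $Tx_k\neq0$ exist equally when $0$ is a pole. For $T=\tmatrix{0&1\\0&0}\oplus 1$ on $\bbC^3$ take $x_k=(1,\varepsilon_k,0)$; yet $\spec(\overline{T})=\{0,1\}$ by Theorem~\ref{thm:roots-of-unity-general}. So any argument using only such vectors would prove something false.

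What is needed is the faster-than-polynomial resolvent growth that characterises an essential singularity. The paper takes resolvent points $\lambda_n\to0$ and unit vectors $y_n$ with $Ty_n=\lambda_ny_n-x_n/\alpha_n$, $\norm{x_n}=1$, and $\alpha_n\modulus{\lambda_n}^k\to\infty$ for every $k$. It then applies $h_n(z)=\modulus{z_1}^{1-\gamma_n}\modulus{z_2}^{\gamma_n}$, with $\modulus{\lambda_n}^{\gamma_n}=r$, to $(w_n,y_n)$, where $(w_n)$ is an approximate eigenvector for $\spr(T)$. The defect is of order $\alpha_n^{-\gamma_n}$, which tends to $0$ precisely because of that growth (Lemma~\ref{lem:zero-essential-singularity}, Corollary~\ref{cor:zero-essential-singularity}).

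\textbf{Non-isolated $0$.} Your claim that ``the annulus argument already yields everything'' fails as stated, because $\partial\spec(T)$ need not contain nonzero points of small modulus. For example, $\spec(T)=\tfrac12\overline{\bbD}\cup\{1\}$ in Example~\ref{exas:inf-dim-pictures}\ref{exas:inf-dim-pictures:itm:shift-modified}, where boundary points only give $[\tfrac12,1]\,\bbT$. The paper closes this with Theorem~\ref{thm:spectral-inclusion}. If $\lambda\in\bbD\setminus\spec(\overline{T})$, then $\lambda\notin\spec(T)$. Moving from a spectral value $\mu_0$ with $0<\modulus{\mu_0}<\modulus{\lambda}$ to $\lambda$ inside $\{\modulus{\mu_0}\le\modulus{z}\le\modulus{\lambda}\}$ produces $\mu_1\in\partial\spec(T)\subseteq\appSpec(T)$ with $0<\modulus{\mu_1}\le\modulus{\lambda}$. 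Theorem~\ref{thm:dounut-from-eigenvalues}\ref{thm:dounut-from-eigenvalues:itm:approx}, made applicable by Lemma~\ref{FBLDisjoinedness12}, then gives $\lambda\in\spec(\overline{T})$, a contradiction.
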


\section{Preliminaries}
\label{sec:prelim}

We denote the complex unit disc and unit circle by 
\begin{align*}
    \bbD = \{ z \in \bbC \mid \modulus{z} < 1 \}
    \quad \text{and} \quad
    \bbT = \partial\bbD = \{ z \in \bbC \mid \modulus{z} = 1 \}.
\end{align*}
Throughout the article we will freely use the theory of real and complex Banach lattices and positive operators acting on them. 
Standard references for this topic include the books  \cite{AliprantisBurkinshaw1985, MeyerNieberg1991, Schaefer1974}.
We will denote the spectrum of a bounded linear operator $T$ on a complex Banach space $E$ by $\spec(T)$, the point spectrum by $\pntSpec(T)$ and the approximate point spectrum by $\appSpec(T)$. 
If $\spec(T) \not= \{0\}$ then, additionally to the spectral radius $\spr(T)$, we define the inner spectral radius as $\spri(T)\coloneqq\inf\{\modulus{\lambda}\mid\lambda\in\spec(T)\setminus\{0\}\}$. Similarly we define $\pntSpr(T)$ and $\appSpec(T)$ for the (approximate) point spectrum. 
Finally, we also define $\pntSpri(T)$ and $\appSpri(T)$ in the same way as $\spri(T)$ if $\pntSpec(T)$ and $\appSpec(T)$ contain a non-zero element, respectively.
The identity operator on a space $E$ will be denoted by $\id_E$. 
By an \emph{operator} between two Banach space we always mean a linear operator throughout the article. 

Following the notation from \cite{deHeviaTradacete2023} we will, as mentioned above, denote the free complex Banach lattice over a complex Banach space $E$ by $\FBL_\bbC[E]$ and the isometrical embedding of $E$ into $\FBL_\bbC[E]$ by $\delta_E: E \to \FBL_\bbC[E]$. 

It follows from the universal property of the free Banach lattice, applied to the map $\delta_E T: E \to \FBL_\bbC[E]$, that there exists a unique lattice homomorphism $\overline{T}: \FBL_\bbC[E] \to \FBL_\bbC[E]$ that makes the diagram
\begin{center}
    \begin{tikzcd}
        \FBL_\bbC[E] \arrow[r, "\overline{T}"] & \FBL_\bbC[E] \\ 
        E \arrow[u, "\delta_E"] \arrow[r, "T"] & E \arrow[u, "\delta_E"]
    \end{tikzcd}
\end{center}
commute and has norm $\norm{\overline{T}} = \norm{\delta_E T} = \norm{T}$, since $\delta_E$ is isometric.
We will use the operator $\overline{T}$ throughout large parts of the article, and whenever we work with it, it is advisable to keep to above diagram in mind.

If $E$ is a complex Banach space and $T \colon E \to E$ is a bounded linear operator, 
then $0 \in \spec(T)$ if and only if $0 \in \spec(\overline{T})$; 
this was proved in \cite[Proposition~6.1(2)]{deHeviaTradacete2023} and we will often use this result tacitly throughout the article.

We will use the following representation result of complex free Banach lattices, which is a slight modification of a representation given in \cite{deHeviaTradacete2023}.
In \cite[Theorem~3.3 on p.\,6]{deHeviaTradacete2023} the free complex Banach lattice over a complex Banach space $E$ is characterized as $\FBL_\bbC[E]\coloneqq \FBL[E_\bbR] \oplus i \FBL[E_\bbR]$, which is a subspace of the space $C_{ph}((E_\bbR)',\bbC)$ endowed with the norm given in \cite[Definition~3.1 on p.\,5]{deHeviaTradacete2023}.
As shown in this reference, $\Psi: (E')_\bbR\to (E_\bbR)'$, $\Psi z' = \re z'$ is an isometric isomorphism of real Banach spaces. 
Now observe that the composition operator $\Phi:C_{ph}((E_\bbR)',\bbC)\to C_{ph}((E')_\bbR,\bbC)$, $\Phi f = f\circ\Psi$ is an isomorphism of Banach lattices. 
By defining $\tilde\delta_E:E\to \Phi \FBL_\bbC[E]$ as $\tilde\delta_E = \Phi\delta_E$ we can treat $\Phi \FBL_\bbC[E]\subseteq C_{ph}((E')_\bbR,\bbC)$ as the free complex Banach lattice over $E$. Therefore from here on we simply write $E'$ for $(E')_\bbR$ and $\delta_E$ instead of $\tilde\delta_E$. This makes a few calculations more intuitive as one can see from the proof of \cite[Lemma 3.2 on p.\,5]{deHeviaTradacete2023} that $\delta_E(z)(z') = \langle z',z\rangle$ for all $z\in E$ and $z'\in E'$.
So from now on we will use the following convention about the representation of free complex Banach lattices: 

\begin{convention}
    \label{conv:representation}
    Let $E$ be a complex Banach space. 
    We consider the free Banach lattice $\FBL_\bbC[E]$ as a continuously embedded subspace of $C_{ph}(E',\bbC)$. 
    More precisely, one has
    \begin{align*}
        \delta_E: 
        E \hookrightarrow \FBL_\bbC[E] \subseteq C_{ph}(E',\bbC)
    \end{align*}
    and the formula 
    \begin{align*}
        \delta_E(z)(z') = \langle z',z\rangle
    \end{align*} 
    holds for all $z\in E$ and $z'\in E'$. 
    Note that $\FBL_\bbC[E]$ need not be closed in $C_{ph}(E',\bbC)$ if $E$ is infinite-dimensional.
\end{convention}

We need a few results from spectral theory that we recall in the following lemmas. 

\begin{lemma} 
    \label{lem:spectral-decomposition}
    Let $E$ be a complex Banach space and $0 \not= T: E \to E$ a bounded linear operator. 
    \begin{enumerate}[label=\upshape(\alph*)]
        \item \label{lem:spectral-decomposition:itm:decomp}
        Let $T$ be a root of unity and choose $n \ge 1$ such that $T^n = \id_E$. 
        Then 
        \begin{align*}
            E 
            = 
            \oplus_{k=0}^{n-1} \ker(\lambda^k-T)
            ,
        \end{align*}
        where $\lambda \in \bbC$ is any primitive $n$-th root of unity.

        \item \label{lem:spectral-decomposition:itm:id}
        The operator $T$ is a root of unity if and only if $\spec(T)$ consists of finitely many roots of unity that are all first order poles of the resolvent. 
        In this case, $\spec(T) = \pntSpec(T)$ and the order of the subgroup of $\bbT$ generated by $\spec(T)$ is the smallest integer $n \ge 1$ such that $T^n = \id_E$.

        \item \label{lem:spectral-decomposition:itm:proj}
        Then $T$ is a proper essential roots of unity if and only if $\spec(T)$ consists of $0$, which is a pole of the resolvent of any order, and of finitely many roots of unity, which are first order poles of the resolvent. 
        In this case, $\spec(T) = \pntSpec(T)$.
    \end{enumerate}    
\end{lemma}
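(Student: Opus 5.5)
For part~\ref{lem:spectral-decomposition:itm:decomp} I will use the polynomial identity behind the discrete Fourier transform. For $k = 0,\dots,n-1$ define
\begin{align*}
    P_k \coloneqq \frac{1}{n}\sum_{j=0}^{n-1} \lambda^{-jk} T^j .
\end{align*}
Since $T^n = \id_E$, one gets $T P_k = \lambda^k P_k$, so $P_k E \subseteq \ker(\lambda^k - T)$. Orthogonality of the characters, $\sum_{k=0}^{n-1} \lambda^{-jk} = n\,\delta_{j0}$ for $0 \le j \le n-1$, gives $\sum_k P_k = \id_E$. Conversely, $P_k$ restricted to $\ker(\lambda^m - T)$ is $\delta_{km}\id$, again by orthogonality. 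Hence the $P_k$ are complementary projections onto the eigenspaces, and the sum of the eigenspaces is direct. Because $P_k$ is a polynomial in $T$, each summand is closed and the decomposition is topological.

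For part~\ref{lem:spectral-decomposition:itm:id}, I start with "$\Rightarrow$". By~\ref{lem:spectral-decomposition:itm:decomp}, $T$ is a direct sum of scalar operators $\lambda^k \id$ on the nonzero summands. So $\spec(T)=\pntSpec(T)$ is the finite set of those $\lambda^k$ with $\ker(\lambda^k-T)\neq 0$. The resolvent is $\sum_k (\mu-\lambda^k)^{-1}P_k$, which exhibits simple poles. For "$\Leftarrow$", write $\spec(T)=\{\mu_1,\dots,\mu_m\}$ and let $P_i$ be the Riesz projections. A first-order pole at $\mu_i$ gives $(T-\mu_i)P_i=0$, since the coefficient of $(\mu-\mu_i)^{-2}$ in the Laurent expansion is $(T-\mu_i)P_i$. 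As the spectrum is finite, $\sum_i P_i = \id_E$, so $T=\sum_i \mu_i P_i$ and $T^N=\sum_i \mu_i^N P_i$. Therefore $T^N=\id_E$ exactly when $\mu_i^N=1$ for all $i$ (using $P_i\neq 0$). The smallest such $N$ is the least common multiple of the orders of the $\mu_i$. This equals the order of the subgroup they generate, because that subgroup is cyclic, being a finite subgroup of $\bbT$.

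For part~\ref{lem:spectral-decomposition:itm:proj}, suppose $T^n=P$ is a projection and $T$ is not a root of unity. Then $E = \ker P \oplus PE$. Both summands are $T$-invariant because $T$ commutes with $P$. On $PE$ we have $T^n = \id$, and on $\ker P$ we have $T^n=0$. Moreover $\ker P \neq 0$, otherwise $T^n=\id$. So by~\ref{lem:spectral-decomposition:itm:id} applied to $T|_{PE}$ (if $PE\neq 0$), together with nilpotency of $T|_{\ker P}$, the spectrum is $\{0\}$ plus finitely many roots of unity that are simple poles. The resolvent of a nilpotent operator of order $\le n$ has a pole at $0$ of order at most $n$, and all points of the spectrum are eigenvalues. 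Conversely, assume the spectral condition. Let $Q$ be the Riesz projection for $\{0\}$. A pole of order $p$ at $0$ means $(TQ)^p = 0$. Apply~\ref{lem:spectral-decomposition:itm:id} to $T$ on $(\id-Q)E$ to get $m$ with $T^m=\id$ there. With $n$ a multiple of $m$ and $n\ge p$, $T^n = \id-Q$, a projection. $T$ is not a root of unity since $0\in\spec(T)$.

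The only delicate point is in the converse directions: identifying simple poles with $(T-\mu)P_\mu=0$, and a pole of order $p$ at $0$ with nilpotency of $TQ$. Both follow from the standard Laurent expansion of the resolvent around an isolated spectral point, so I expect no genuine obstacle. One caveat: the statement assumes $T\neq0$, so that in~\ref{lem:spectral-decomposition:itm:proj} the set of roots of unity is nonempty.
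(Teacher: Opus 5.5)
Your proof is correct and follows essentially the paper's route: the projections $P_k=\frac1n\sum_{j}\lambda^{-jk}T^j$ in (a) are exactly the Lagrange interpolation polynomials behind the cited decomposition $\ker p(T)=\bigoplus_k\ker(\lambda_k-T)$, and in (b) and (c) you split off spectral subspaces and read off pole orders from the Laurent expansion just as the paper does, only in more detail. Your closing caveat is inaccurate, though: $T\neq 0$ does not make the set of roots of unity in (c) nonempty, since a nonzero nilpotent operator such as $\tmatrix{0&1\\0&0}$ is a proper essential root of unity with spectrum $\{0\}$; this is harmless, because nothing in your argument uses that set being nonempty.
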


\begin{proof}
    All of this is classical spectral theory, so we only outline the main arguments. 
    \begin{description}[leftmargin=0.65cm]
        \item[\ref{lem:spectral-decomposition:itm:decomp}]
        The assertion of the lemma is a special case of the following classical result: 
        if $p$ is a polynomial function over $\bbC$ of degree $n$ whose roots $\lambda_0, \dots, \lambda_{n-1}$ are all mutually distinct, then 
        \begin{align*}
            \ker p(T)
            = 
            \oplus_{k=0}^{n-1} \ker(\lambda_k - T)
            ;
        \end{align*}
        a detailed proof of this can, for instance, be found in \cite[Lemma~2.2.3 on p.\,23]{Glueck2016}. 
        By applying this result to the polynomial function $p: z \mapsto z^n-1$ and using the assumption $p(T) = 0$, one gets the claim.
    
        \item[\ref{lem:spectral-decomposition:itm:id}]
        If $T$ is a root of unity, it follows from the decomposition in~\ref{lem:spectral-decomposition:itm:decomp} consists of finitely many roots of unity and that all of them are first order poles of the resolvent. 
        The decomposition from~\ref{lem:spectral-decomposition:itm:decomp} also implies the claims about the (point) spectrum of $T$. 
        
        Conversely, assume now that $\spec(T)$ consists of finitely many roots of unity that are all first order poles of the resolvent. 
        Then $E$ is the direct sum of the spectral spaces associated to the spectral values of $T$ and each spectral space is actually an eigenspace for the corresponding spectral value. 
        This readily implies that $T$ is a root of unity.

        \item[\ref{lem:spectral-decomposition:itm:proj}]
        Observe that, if $T$ is a proper essential root of unity, then it follows from the spectral mapping theorem for polynomials that $0$ is an isolated spectral value of $T$. 
        Hence, we may assume in any case that $0$ is an isolated spectral value of $T$. 
        Thus, we can split off the spectral value $0$ by using the spectral projection $Q \colon E \to E$ associated to $0$. 
        On the remainder of the space we can apply~\ref{lem:spectral-decomposition:itm:id}, 
        and on the range of $Q$ the restriction of $T$ has spectrum $0$ only -- hence, that part of the operator is essentially a root of unity if and only if it is nilpotent, and this is in turn equivalent to $0$ being a pole of the resolvent.
        \qedhere 
    \end{description}
\end{proof}

Finally we observe that, for a lattice lattice homomorphism $S$, very little spectral information suffices to characterize whether $S$ is essentially a root of unity:

\begin{proposition}
    \label{prop:hom-root-full-circle}
    Let $E$ be a complex Banach lattice and let $S \colon E \to E$ be a lattice homomorphism. 
    \begin{enumerate}[label=\upshape(\alph*)]
        \item\label{prop:hom-root-full-circle:itm:id} 
        Let $0 \not\in \spec(S)$. 
        The operator $S$ is a root of unity if and only if $\spec(S) \subsetneq \bbT$.

        \item\label{prop:hom-root-full-circle:itm:proj} 
        Let $0 \in \spec(S)$. 
        The operator $S$ is essentially a root of unity if and only if $\spec(S) \setminus \{0\} \subsetneq \bbT$.
    \end{enumerate}
\end{proposition}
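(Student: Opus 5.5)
The plan is to combine Scheffold's cyclicity theorem with the elementary structure of closed cyclic subsets of $\bbT$, and then use the lattice homomorphism property a second time to rule out higher-order poles.

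\textbf{Part (a), forward direction.} If $S$ is a root of unity, then by Lemma~\ref{lem:spectral-decomposition}\ref{lem:spectral-decomposition:itm:id} the spectrum $\spec(S)$ is a finite set of roots of unity. Hence it is a proper subset of $\bbT$.

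\textbf{Part (a), converse: the spectrum is a finite group.} Assume $0\notin\spec(S)$ and $\spec(S)\subsetneq\bbT$.
\begin{enumerate}[label=\upshape(\roman*)]
    \item By Scheffold's theorem, $\spec(S)$ is cyclic. So it is a closed union of subgroups of $\bbT$.
    \item If it contained an element that is not a root of unity, the generated subgroup would be dense, forcing $\spec(S)=\bbT$. So every spectral value is a root of unity.
    \item If the orders of these roots were unbounded, then $\spec(S)$ would contain the full groups of $n$-th roots of unity for infinitely many $n$. These groups form $2\pi/n$-nets of $\bbT$, so again $\spec(S)=\bbT$ by closedness.
\end{enumerate}
Thus $\spec(S)$ is contained in the group of $n$-th roots of unity for some $n$. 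The spectral mapping theorem then gives $\spec(R)=\{1\}$ for $R\coloneqq S^n$, which is again an invertible lattice homomorphism.

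\textbf{Part (a), converse: removing the pole.} The remaining task is to show that an invertible lattice homomorphism $R$ with $\spec(R)=\{1\}$ is the identity. By Lemma~\ref{lem:spectral-decomposition}\ref{lem:spectral-decomposition:itm:id} this is the real issue, namely that $1$ is a first order pole. I expect this to be the main obstacle. My plan is as follows.
\begin{enumerate}[label=\upshape(\roman*)]
    \item Write $R=\id+N$ with $N$ quasinilpotent.
    \item Exploit $R^k(x^+)=(R^kx)^+$ for real $x$ together with a suitable normalisation of the growth of $R^kx$. If $N$ is nilpotent, the leading term of $R^kx=\sum_j\binom{k}{j}N^jx$ is $\binom{k}{m}N^mx$, where $m$ is maximal with $N^mx\neq 0$. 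Dividing by $\binom{k}{m}$ and letting $k\to\infty$ in $(R^kx)^+=R^k(x^+)$ should yield that the top-order part of $N$ is positive, and even a lattice homomorphism.
    \item Apply the same argument to $R^{-1}=\id-N+N^2-\dots$, whose corresponding top-order part is $(-1)^m N^m$. Comparing the two positivity statements should force $N^m=0$, and inductively $N=0$.
    \item For a general quasinilpotent $N$, I would replace the binomial expansion by a resolvent-based normalisation. For instance, consider $(\lambda-1)R(\lambda,R)$ for $\lambda\downarrow 1$ and $\lambda\uparrow 1$ along the real axis. This is positive for $\lambda>1$ since $R(\lambda,R)=\sum\lambda^{-k-1}R^k$. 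For $\lambda<1$ one uses $R^{-1}$ in the same way. This gives two opposite sign constraints on the normalised limits.
\end{enumerate}
If this direct approach stalls, I would fall back on the functional calculus for positively homogeneous functions from Section~\ref{sec:functional-calculus}, which commutes with lattice homomorphisms.

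\textbf{Part (b).} The forward direction follows from Lemma~\ref{lem:spectral-decomposition}\ref{lem:spectral-decomposition:itm:proj} exactly as before. For the converse, the cyclicity argument above again shows that $\spec(S)\setminus\{0\}$ is a finite set of roots of unity. Here I use that $\spec(S)\setminus\{0\}$ is still cyclic and that it is closed, since its only possible accumulation point $0$ is excluded by the cyclicity and finiteness argument. Hence, for suitable $n$, the lattice homomorphism $P\coloneqq S^n$ satisfies $\spec(P)\subseteq\{0,1\}$, with both points isolated. It remains to show that $P^2=P$, that is, that $1$ is a first order pole and $0$ is a pole.

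For this I would run the same limiting argument as in part (a) on the spectral part at $1$, now using only $P\ge 0$ and $P(x^+)=(Px)^+$, since $P$ is no longer invertible. Showing that $0$ is a pole should come from the analogous growth argument applied to $P$ near $0$. This is the part where I am least sure the plan closes without further input, and it may require the structural results of later sections.
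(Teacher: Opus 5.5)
Your reduction is correct and matches the paper. Cyclicity and closedness of the spectrum force $\spec(S)$ (resp.\ $\spec(S)\setminus\{0\}$) to be a finite set of roots of unity, so $\spec(S^n)=\{1\}$ (resp.\ $\spec(S^n)\subseteq\{0,1\}$) for some $n$.

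The gap is exactly the step you flag: showing that an invertible lattice homomorphism $R$ with $\spec(R)=\{1\}$ is the identity. The paper does not prove this; it quotes it from \cite[Corollary~3.6(2)]{Arendt1983}. Your substitute does not work:
\begin{itemize}
\item Nilpotent case. Let $N=R-\id$ be nilpotent and $m$ maximal with $N^m\neq 0$. Normalising $R^kx$ and $R^{-k}x$ as $k\to\infty$ shows that $N^m$ and $(-1)^mN^m$ are lattice homomorphisms. That is a contradiction only for odd $m$. For even $m$ both statements just say $N^m\ge 0$, and nothing in your sketch excludes this.
\item Resolvent version. It has the same defect. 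One has $(\lambda-1)(\lambda-R)^{-1}\ge 0$ both for $\lambda>1$ and for $0<\lambda<1$, where $(\lambda-R)^{-1}=-\sum_{k\ge0}\lambda^kR^{-k-1}\le 0$. So no opposite signs appear, and with the normalisation $(\lambda-1)^{m+1}$ you meet the same parity obstruction.
\item Quasinilpotent, non-nilpotent $N$. There is no leading term at all, and your sketch offers no mechanism.
\end{itemize}
In the nilpotent case you could instead use Theorem~\ref{thm:spec-ring}: $N\neq0$ makes $1$ a non-semi-simple eigenvalue of $R$, whence $\bbT\subseteq\pntSpec(R)$, a contradiction. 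The essential-singularity case, however, needs a genuinely different input, which the paper imports from Arendt.

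For (b) your doubts are well founded. In fact this step cannot be closed, because (b) is false as stated. On $\ell^p$ let $W(x_1,x_2,x_3,\dots)=(0,x_1,\tfrac12x_2,\tfrac13x_3,\dots)$.
\begin{itemize}
\item $W$ is a lattice homomorphism with $\norm{W^n}=1/n!$. Hence $\spec(W)=\{0\}$ and $\spec(W)\setminus\{0\}=\emptyset\subsetneq\bbT$.
\item But $W^ne_1=e_{n+1}/n!$, so $W$ is not nilpotent, and no power $W^n$ is a projection.
\item The operator $\id_{\bbC}\oplus W$ gives a counterexample with spectrum $\{0,1\}$.
\item $W$ is not central. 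This shows that the paper's step ``$\spec(S^n)\subseteq\{0,1\}$ implies $S^n\in Z(E)$'' cannot hold as it is used there.
\end{itemize}
What is missing is the hypothesis that $0$ is a pole of the resolvent of $S$, say of order $k$, with spectral projection $Q$. Under that hypothesis the argument goes through:
\begin{itemize}
\item $F\coloneqq S^k(E)=(\id-Q)(E)$ is closed, and, being the range of a lattice homomorphism, it is a sublattice.
\item On $F$, $S$ is an invertible lattice homomorphism with $\spec(S|_F)=\spec(S)\setminus\{0\}$.
\item Part (a) then gives $(S|_F)^n=\id_F$ for some $n$, and hence $S^{nk}=\id-Q$ is a projection.
\end{itemize}
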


\begin{proof}
    \begin{description}[leftmargin=0.65cm]
        \item[\ref{prop:hom-root-full-circle:itm:id}] 
        We may assume that $E \not= \{0\}$.
        If $S$ is a root of unity, then it follows from Lemma~\ref{lem:spectral-decomposition}\ref{lem:spectral-decomposition:itm:id} that $\spec(S) \subsetneq \bbT$. 
        So assume conversely that $\spec(S) \subsetneq \bbT$. 
        Since the spectrum of every lattice homomorphism is cyclic \cite[Theorem~V.4.4 on p.\,325]{Schaefer1974}, it follows that $\spec(S)$ consists of finitely many roots of unity. 
        Hence, there exists an integer $n \ge 1$ such that $\lambda^n = 1$ for all $\lambda\in\spec(\overline{T})$. 
        By the spectral mapping theorem for polynomials it follows that $\spec(S^n) = \{1\}$. 
        Thus, $S^n = \id_E$ is the identity operator by a spectral result for lattice homomorphisms \cite[Corollary 3.6(2) on p.\,204]{Arendt1983}. 

        \item[\ref{prop:hom-root-full-circle:itm:proj}] 
        If $S$ is essentially a root of unity, then $\spec(S) \setminus \{0\} \subsetneq \bbT$ due to Lemma~\ref{lem:spectral-decomposition}\ref{lem:spectral-decomposition:itm:proj}. 
        
        So assume now conversely that $\spec(S) \setminus \{0\} \subsetneq \bbT$. 
        Again by the cyclicity of $\spec(S)$ we conclude that $\spec(S) \setminus \{0\}$ consists of finitely many roots of unity, so there exists an integer $n \ge 1$ such that $\spec(S^n) \subseteq \{0,1\}$. 
        Since $S^n$ is a lattice homomorphism this implies, according to \cite[Theorem~3.5 on p.\,204]{Arendt1983}, that $S^n$ is an element of the so-called \emph{center} $Z(E)$ of $E$, which consists of all those bounded linear operators $T \colon E \to E$ for which there exists a number $c \ge 0$ such that $\modulus{Tx} \le c \modulus{x}$ for all $x \in E$. 

        It is known that $Z(E)$ a subalgebra of the bounded linear operators and a Banach lattice with respect to the operator norm and with respect to the usual order of operators on $E$. 
        Moreover, $Z(E)$ is isometrically isomorphic -- as a Banach lattice and as an algebra -- to $C(K,\bbC)$ for some compact Hausdorff space $K$; see for instance \cite[Theorem~3.1]{Arendt1983}. 
        For each $T \in Z(E)$ the spectrum of $T$ within $Z(E)$ coincides with the spectrum of $T$ within the space of bounded linear operators on $E$ \cite[Corollary~3.3]{Arendt1983}. 
        Those properties and the fact that $\spec(S^n) \subseteq \{0,1\}$ imply that $(S^n)^2 = S^n$, so $S^n$ is a projection, as claimed.
        \qedhere 
    \end{description}
\end{proof}

\section{Spectral inclusion}
\label{sec:spectral-inclusion}

The following theorem is our first main result and answers a question raised in \cite[before Proposition~6.4]{deHeviaTradacete2023}.

\begin{theorem}
    \label{thm:spectral-inclusion}
    Let $T: E \to E$ be a bounded linear operator on a complex Banach space $E$. Then the induced operator $\overline{T}$ on $\FBL_\bbC[E]$ satisfies $\spec(T) \subseteq \spec(\overline{T})$.
\end{theorem}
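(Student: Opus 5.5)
The plan has two parts: first prove the inclusion for the approximate point spectrum, then handle the spectral values of $T$ that are not approximate eigenvalues by passing to the dual.

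\emph{Step 1 (approximate point spectrum).} Let $\lambda \in \appSpec(T)$ and pick unit vectors $x_n \in E$ with $(\lambda - T)x_n \to 0$. Since $\delta_E$ is isometric and $\overline{T}\delta_E = \delta_E T$, we get
\begin{align*}
    \norm{\delta_E x_n} = 1
    \quad \text{and} \quad
    (\lambda - \overline{T})\delta_E x_n = \delta_E(\lambda - T)x_n \to 0 .
\end{align*}
Hence $\appSpec(T) \subseteq \appSpec(\overline{T})$. In particular $\partial \spec(T) \subseteq \spec(\overline{T})$.

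\emph{Step 2 (structure of the remaining set).} Suppose $\lambda \in \spec(T) \setminus \spec(\overline{T})$, and let $U$ be the connected component of $\bbC \setminus \spec(\overline{T})$ containing $\lambda$. If $U$ contained a point outside $\spec(T)$, connectedness of $U$ would force $U$ to meet $\partial\spec(T)$. By Step~1 this is impossible, so $U \subseteq \spec(T)$. As $\spec(T)$ is bounded, $U$ is a bounded hole of $\spec(\overline{T})$. On $U$ the spectrum of $T$ is then essentially residual: $\lambda - T$ is injective with closed range but not surjective, so $\lambda \in \appSpec(T')$.

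\emph{Step 3 (duality via point evaluations).} Use Convention~\ref{conv:representation}, viewing $\FBL_\bbC[E] \subseteq C_{ph}(E',\bbC)$. I would first show that $\overline{T} f = f \circ T'$:
\begin{itemize}
    \item both sides are lattice homomorphisms agreeing on $\delta_E(E)$, because $(\overline{T}\delta_E z)(z') = \langle z', Tz\rangle = \langle T'z', z\rangle$;
    \item composition with $T'$ is bounded in the free norm, since $T'$ maps admissible families in the defining supremum to admissible families with an extra factor $\norm{T}$;
    \item uniqueness in the universal property then gives equality.
\end{itemize}
The point evaluations $\varepsilon_{z'}\colon f \mapsto f(z')$ are lattice homomorphisms with $\norm{\varepsilon_{z'}} \le \norm{z'}$ and $\overline{T}'\varepsilon_{z'} = \varepsilon_{T'z'}$. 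Moreover $\modulus{\varepsilon_{z'}(\delta_E z)} = \modulus{\langle z', z\rangle}$, so $\norm{\varepsilon_{z'}} = \norm{z'}$.

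Now take unit vectors $z'_n$ with $(\lambda - T')z'_n \to 0$. The goal is to show that
\begin{align*}
    \lambda \varepsilon_{z'_n} - \varepsilon_{T'z'_n} = (\lambda - \overline{T})'\varepsilon_{z'_n} \to 0 ,
\end{align*}
which gives $\lambda \in \appSpec(\overline{T}')$ and hence $\lambda \in \spec(\overline{T})$, a contradiction.

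\emph{Main obstacle.} The functionals $\varepsilon_{z'}$ are not linear in $z'$, so $\varepsilon_{T'z'_n}$ cannot simply be replaced by $\varepsilon_{\lambda z'_n}$. The difficulty splits into two parts:
\begin{itemize}
    \item \textbf{Continuity.} One needs $\norm{\varepsilon_{u'} - \varepsilon_{v'}} \to 0$ whenever $u', v'$ stay bounded and $\norm{u' - v'} \to 0$. I would first try to prove a Lipschitz estimate, by checking it on lattice expressions in $\delta_E$-elements and bounding the difference by the free norm.
    \item \textbf{Phase.} One needs to relate $\varepsilon_{\lambda z'}$ to $\lambda \varepsilon_{z'}$. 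For this I would exploit the complex structure of $\FBL_\bbC[E]$ together with the cyclicity of $\spec(\overline{T})$ (Scheffold), for instance by writing $\lambda = \modulus{\lambda} e^{\iu\theta}$ and comparing with a rotated operator.
\end{itemize}
If the uniform Lipschitz estimate fails, the fallback is to exploit the structure of Step~2 instead: work on the invariant subspace $\delta_E(E)$ and use the resolvent of $\overline{T}$ on the hole $U$ to produce a bounded inverse of $\lambda - T$. That route would need a suitable lattice-homomorphic left inverse, obtained from the universal property applied to a target $C(K)$-type lattice into which $E$ embeds.
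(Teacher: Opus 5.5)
\textbf{There is a genuine gap: the phase problem is the heart of the theorem, and your proposal does not resolve it.}

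Your Steps~1 and~2 are correct and match the paper's reduction. Approximate eigenvalues pass from $T$ to $\overline{T}$ through $\delta_E$. Hence any $\lambda \in \spec(T)\setminus\spec(\overline{T})$ is an interior point of $\spec(T)$ at which $\lambda - T$ is bounded below but not surjective. Hahn--Banach then gives an \emph{exact} eigenvector $0 \neq z' \in E'$ with $T'z' = \lambda z'$, so your continuity obstacle is unnecessary. The hoped-for estimate is also false: already for $E = \bbC^2$ the free norm is equivalent to the sup norm on the unit sphere of $E'$, so point evaluations at distinct points of that sphere stay uniformly apart.

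The goal of Step~3 cannot be reached. Testing against $\modulus{\delta_E(z)}$ with $\norm{z} \le 1$ gives
\[
    \norm{(\lambda - \overline{T})'\varepsilon_{z'}}
    =
    \norm{\lambda \varepsilon_{z'} - \varepsilon_{\lambda z'}}
    \ge
    \modulus{\lambda - \modulus{\lambda}} \, \norm{z'} ,
\]
so $\varepsilon_{z'}$ is never even an approximate eigenvector of $\overline{T}'$ for $\lambda \notin [0,\infty)$.

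Your remedy via cyclicity and a rotated operator does not close this. Write $\lambda = \modulus{\lambda}u$. The positive case gives $\modulus{\lambda} \in \spec(\overline{u^{-1}T})$, where $\overline{u^{-1}T} = \overline{u^{-1}\id_E}\,\overline{T}$. The product inclusion for commuting operators then only yields $\modulus{\lambda}u^k \in \spec(\overline{T})$ for some $k$, and that $k$ may be $0$. If $u$ is not a root of unity you get nothing at all, since then $\spec(\overline{u^{-1}\id_E}) = \bbT$. Your fallback via a left inverse only reproduces that $\lambda - T$ is bounded below, which you already know. What is missing is surjectivity, i.e.\ information on the dual side.

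The missing idea is the following.
\begin{enumerate}
    \item One has $\varepsilon_{uz'} = \overline{u\id_E}'\varepsilon_{z'}$, and $\overline{u\id_E}$ commutes with $\overline{T}$ by uniqueness in the universal property. Hence $\overline{T}'\varepsilon_{z'} = \modulus{\lambda}\,\overline{u\id_E}'\varepsilon_{z'}$.
    \item Take $u$ a primitive $p$-th root of unity with $p$ prime. Then $\overline{u\id_E}^p = \id$, so $\FBLC[E]$ splits into the eigenspaces of $\overline{u\id_E}$ for $1,u,\dots,u^{p-1}$. The corresponding spectral projections $P_k$ commute with $\overline{T}$.
    \item Each $P_k'\varepsilon_{z'}$ is either zero or an eigenvector of $\overline{T}'$ for $\modulus{\lambda}u^k$.
    \item Some $P_k'\varepsilon_{z'}$ with $k \neq 0$ is nonzero. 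Otherwise $\varepsilon_{uz'} = \varepsilon_{z'}$, which fails when tested on $\delta_E(E)$.
    \item Since $p$ is prime, $u^k$ generates all $p$-th roots of unity. Cyclicity of $\spec(\overline{T})$ therefore gives $\modulus{\lambda}u \in \spec(\overline{T})$.
    \item Points $\rho u$ with $\rho > 0$ and $u$ of prime order are dense in $\bbC$. So your open set $U \subseteq \spec(T)\setminus\spec(\overline{T})$ contains such a point, and the argument above places it in $\spec(\overline{T})$, a contradiction.
\end{enumerate}
Step~4, which isolates a component with nontrivial phase, is exactly what your proposal lacks.
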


\begin{proof}
    Let $\mu\in\spec(T)$ and $\mu \not= 0$ 
    (for $\mu = 0$ the result follows from \cite[Proposition~6.1(2) on pp.\,19--20]{deHeviaTradacete2023}). 
    By rescaling $T$ we may assume that $\modulus{\mu} = 1$.
    If $\mu$ is in the approximate point spectrum $\appSpec(T)$, then it is also in $\appSpec(\overline{T})$ according to~\cite[Proposition~6.1(1) on pp.\,19--20]{deHeviaTradacete2023}. 
    So we may, and shall, assume for the rest of the proof that $\mu$ is not an approximate eigenvalue of $T$. 
    Then $\mu$ is in the topological interior of $\spec(T)$ and it is an eigenvalue of the dual operator $T'$, say with eigenvector $0 \not= x' \in E'$. 
    
    The universal property of $\FBLC[E]$ yields the existence of a functional $\widehat{x'} \in \big(\FBLC[E]\big)'$ such that $\widehat{x'}\delta_E = x'$ and $\norm{\widehat{x'}} = \norm{x'}$. 
    We have
    \begin{equation} 
        \label{eq:eig-mu-id}
        \overline{T}' \, \widehat{x'}
        = 
        \widehat{x'}\;\overline{T} 
        = 
        \widehat{x'T} 
        = 
        \widehat{T'x'} 
        = 
        \widehat{\mu x'}
        =
        \widehat{x' \, \mu \id_E}
        = 
        \widehat{x'} \; \overline{\mu\id_E}
        = 
        \overline{\mu\id_E}' \, \widehat{x'}
        .
    \end{equation}
    Now, we first consider the special case that $\mu$ is a primitive $p$-th root of unity for a prime number $p \in \bbN$. 
    Then $\overline{\mu\id_E}^p = \overline{\mu^p\id_E} = \overline{\id_E} = \id_{\FBLC[E]}$. 
    Hence, one has the decomposition 
    \begin{align*}
        \FBLC[E] 
        = 
        \oplus_{k=0}^{p-1} \ker\big(\mu^k-\overline{\mu\id_E}\big)
    \end{align*}
    according to Lemma~\ref{lem:spectral-decomposition}.
    For each $k \in \{0,\dots,p-1\}$ let $P_k: \FBLC[E] \to \FBLC[E]$ denote the projection onto the $k$-th component of this decomposition. 
    Then $\overline{\mu\id} = \sum_{k=0}^{n-1}\mu^k P_k$.
    Moreover, since $\overline{T}$ commutes with $\overline{\mu\id_E}$, it also commutes with the projections $P_0, \dots, P_{p-1}$; 
    this follows since those projections are spectral projections of $\overline{\mu\id_E}$ and can hence be written as integrals over the resolvent of $\overline{\mu\id_E}$.
    
    Also note that, for each $k \in \{0,\dots,p-1\}$, the dual operator $P_k'$ is a projection onto $\ker\big(\mu^k-\overline{\mu\id_E}'\big)$.
    So it follows from~\eqref{eq:eig-mu-id} that
    \[ 
        \overline{T}'P_k'\,\widehat{x'} 
        = 
        P_k'\,\overline{T}'\,\widehat{x'} 
        = 
        P_k'\,\overline{\mu\id_E}'\,\widehat{x'} 
        = 
        \overline{\mu\id_E}' \, P_k' \, \widehat{x'} 
        = 
        \mu^kP_k'\,\widehat{x'} 
    \]
    for each $k \in \{0, \dots, p-1\}$.
    
    We now show that there exists at least one integer $k_0 \in \{1, \dots,p-1\}$ such that $P_{k_0}' \widehat{x'} \not= 0$ and hence, $\mu^{k_0}$ is an eigenvalue of $\widehat{T}'$ by the previous equality. 
    So assume to the contrary that $P_{k}' \widehat{x'} = 0$ for all $k \in \{1, \dots, p-1\}$. 
    Then $\widehat{x'} = P_0' \widehat{x'}$, so $\widehat{x'}$ is a fixed vector of $\overline{\mu\id_E}'$ and hence,
    \[ 
        \widehat{\mu x'} 
        = 
        \widehat{x'} \; \overline{\mu\id_E} 
        = 
        \overline{\mu\id_E}' \, \widehat{x'} 
        = 
        \widehat{x'}
        ,
    \]
    which implies $\mu x' = x'$. 
    This is a contradiction since $x' \not= 0$ and $\mu \not= 1$ 
    (as $\mu$ is assumed to be a primitive $p$-th root of unity). 
    
    So we have shown that $\mu^{k_0}$ is an eigenvalue of $\overline{T}'$ and thus a spectral value of $\overline{T}$ for at least one integer $k_0 \in \{1, \dots,p-1\}$.
    Since the spectrum of a lattice homomorphism is cyclic \cite[Theorem~V.4.4 on p.\,325]{Schaefer1974}, all integer powers of $\mu^{k_0}$ are spectral values of $\overline{T}$, too. 
    As $p$ is prime and $k_0 \in \{1,\dots, p-1\}$, the number $\mu^{k_0}$ is also a primitive $p$-th root of unity and hence, there exists an integer $j \ge 1$ such that $\mu = (\mu^{k_0})^j$. 
    Hence, $\mu$ is itself a spectral value of $\overline{T}$.

    Finally, consider the case where $\mu$ is a general point of modulus $1$ in the interior of $\spec(T)$. 
    Then there exists a sequence $(\mu_n)$ in $\spec(T)$ that converges to $\mu$ and such that each $\mu_n$ is a primitive root of unity of a prime order. 
    By what we have just shown one has $\mu_n \in \spec(\overline{T})$ for each index $n$ 
    and since the spectrum is closed, it follows that $\mu\in\spec(\overline T)$.
\end{proof}

We end this section with a slightly tangential, but still worthwhile observation. 
The following proposition was shown in \cite[Proposition~6.5 on p.\,21]{deHeviaTradacete2023}. We demonstrate that its proof becomes very simple if one uses Lemma~\ref{lem:spectral-decomposition}.

\begin{proposition} 
    \label{prop:spectrum-characterisation}
    Let $E$ be a complex Banach space and let $\mu\in\bbC$ be a primitive $n$-th root of unity for an integer $n \ge 1$. 
    Then 
    \[
        \spec(\overline{\mu\id_E}) 
        = 
        \big\{\mu^k\mid k \in \{0,\dots,n-1 \}\big\}.
    \]
\end{proposition}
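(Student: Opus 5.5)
We prove the two inclusions separately. Throughout, $S \coloneqq \overline{\mu\id_E}$ is a lattice homomorphism on $\FBLC[E]$. By uniqueness in the universal property, $S^n = \overline{\mu^n\id_E} = \overline{\id_E} = \id_{\FBLC[E]}$, exactly as in the proof of Theorem~\ref{thm:spectral-inclusion}.

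For the inclusion ``$\subseteq$'', we apply Lemma~\ref{lem:spectral-decomposition}\ref{lem:spectral-decomposition:itm:decomp} to $S$. This gives
\[
    \FBLC[E] = \oplus_{k=0}^{n-1}\ker(\mu^k - S).
\]
Hence $S = \sum_k \mu^k P_k$ with commuting projections $P_k$ that sum to the identity. The spectrum of such an operator is contained in $\{\mu^k \mid k \in \{0,\dots,n-1\}\}$. Alternatively, the spectral mapping theorem applied to $S^n = \id$ yields $\lambda^n = 1$ for every $\lambda \in \spec(S)$. (If $E = \{0\}$ the statement degenerates, so we tacitly assume $E \neq \{0\}$.)

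For the inclusion ``$\supseteq$'', it suffices to show $\mu \in \spec(S)$. Indeed, $\spec(S)$ is cyclic by Scheffold's theorem \cite[Theorem~V.4.4 on p.\,325]{Schaefer1974}, so all powers $\mu^k$ then lie in $\spec(S)$. To see that $\mu \in \spec(S)$, take any $0 \neq z \in E$. Then
\[
    S\delta_E(z) = \delta_E(\mu z) = \mu\,\delta_E(z),
\]
and $\delta_E(z) \neq 0$ because $\delta_E$ is isometric. So $\mu$ is an eigenvalue of $S$.

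The only mildly delicate point is this eigenvalue argument. Applying Theorem~\ref{thm:spectral-inclusion} directly would also work, since $\spec(\mu\id_E) = \{\mu\}$.
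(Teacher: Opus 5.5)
Your proof is correct and follows essentially the same route as the paper. The eigenvector $\delta_E(z)$ shows that $\mu$ is an eigenvalue of $\overline{\mu\id_E}$, cyclicity then gives all powers $\mu^k$, and $(\overline{\mu\id_E})^n = \id_{\FBLC[E]}$ together with Lemma~\ref{lem:spectral-decomposition} gives the reverse inclusion; the only cosmetic difference is that the paper invokes cyclicity of the point spectrum rather than Scheffold's theorem, which additionally shows that each $\mu^k$ is an eigenvalue.
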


\begin{proof}
    As $\mu$ is an eigenvalue of $\mu\id_E$, it easily follows that $\mu$ is also an eigenvalue of $\overline{\mu\id_E}$, see \cite[Proposition~6.1(a)]{deHeviaTradacete2023} for the details. 
    Since the point spectrum of a lattice homomorphism is cyclic \cite[Corollary~2 to Proposition~V.4.2 on p.\,324]{Schaefer1974} we conclude that
    \[ 
        \{\mu^k\mid k=0,\dots,n-1\} 
        \subseteq 
        \spec(\overline{\mu\id_E}).
    \]
    The converse inclusion follows from Lemma~\ref{lem:spectral-decomposition} since 
    $\big(\overline{\mu \id_E}\big)^n = \overline{\mu^n \id_E} = \overline{\id_E} = \id_{\FBLC[E]}$. 
\end{proof}

\section{A counterexample}
\label{sec:bigger-spectrum}

As the spectrum of a lattice homomorphism is always cyclic, Theorem~\ref{thm:spectral-inclusion} raises the question if $\spec(\overline T)$ is the smallest closed cyclic subset of $\bbC$ that contains $\spec(T)$. 
Proposition~\ref{prop:circle} and Example~\ref{exa:counterexample-spectrum} below show that this is not the case, in general. 
We start with the following simple lemma that we will need several times throughout the article.

\begin{lemma} \label{lem:spectral-radius}
    Let $T:E\to E$ be a bounded linear operator on a Banach space $E$. Then $\spec(\overline{T})\subseteq [r,\spr(T)]\bbT$ where $r\coloneqq \inf_{\lambda\in\spec(T)}\modulus{\lambda}$.
\end{lemma}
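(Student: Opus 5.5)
We have to prove two bounds: the upper bound $\modulus{\lambda} \le \spr(T)$ for all $\lambda\in\spec(\overline{T})$, and the lower bound $\modulus{\lambda} \ge r$.

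\textbf{Upper bound.} This follows from the norm identity $\norm{\overline{S}} = \norm{S}$, valid for every bounded operator $S$ on $E$. By uniqueness in the universal property, $\overline{T^n} = \overline{T}^n$, since both are lattice homomorphisms extending $\delta_E T^n$. Hence $\norm{\overline{T}^n} = \norm{T^n}$, and Gelfand's formula gives $\spr(\overline{T}) = \spr(T)$.

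\textbf{Lower bound.} If $r = 0$ there is nothing to show, so assume $r>0$, i.e.\ $T$ is invertible. Then, again by uniqueness, $\overline{T^{-1}}\,\overline{T} = \overline{T^{-1}T} = \overline{\id_E} = \id_{\FBLC[E]}$, and similarly in the other order. So $\overline{T}$ is invertible with inverse $\overline{T^{-1}}$. Applying the upper bound to $T^{-1}$ yields
\[
\spr\big(\overline{T}^{-1}\big) = \spr(T^{-1}) = \frac{1}{\inf_{\lambda\in\spec(T)}\modulus{\lambda}} = \frac1r .
\]
By the spectral mapping theorem for inversion, every $\lambda \in \spec(\overline{T})$ then satisfies $\modulus{\lambda}^{-1} \le 1/r$, that is, $\modulus{\lambda}\ge r$.

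Combining the two bounds gives $\spec(\overline{T}) \subseteq [r,\spr(T)]\,\bbT$. The only point requiring care is the functoriality $\overline{ST} = \overline{S}\,\overline{T}$ together with $\overline{\id_E} = \id$. This is immediate from the uniqueness part of the universal property: a composition of lattice homomorphisms is a lattice homomorphism, and the composite satisfies $\overline{S}\,\overline{T}\delta_E = \delta_E ST$. No real obstacle is expected.
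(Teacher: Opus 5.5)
Your proposal is correct and follows essentially the same route as the paper: first $\spr(\overline{T}) = \spr(T)$ via Gelfand's formula, then, for $0 \notin \spec(T)$, the identity $\overline{T}^{-1} = \overline{T^{-1}}$ gives $\spr(\overline{T}^{-1}) = \spr(T^{-1}) = 1/r$. The only difference is that you derive the norm identity $\norm{\overline{S}} = \norm{S}$ and the functoriality $\overline{ST} = \overline{S}\,\overline{T}$ directly from the uniqueness in the universal property, whereas the paper cites de Hevia and Tradacete for these facts.
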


\begin{proof}
    Observe that $T$ and $\overline{T}$ have the same spectral radius.
    This follows from Gelfand's spectral radius formula, see \cite[Proposition~6.1(3)]{deHeviaTradacete2023} for details. 
    If $0 \in \spec(T)$, then $r = 0$ and the lemma is proved. 

    Assume now that $0 \not\in \spec(T)$. 
    Then $T$ is invertible, hence so is $\overline{T}$, and $\overline{T}^{-1} = \overline{T^{-1}}$; this follows right from the definition of $\overline{T}$ and can also be found in \cite[Proposition~6.1(2) and its proof]{deHeviaTradacete2023}. 
    Hence, one also has $\spr(\overline{T}^{-1}) = \spr(T^{-1}) = \frac1{r}$, 
    which implies that all spectral values of $\overline{T}$ have modulus $\ge r$, as claimed.
\end{proof}

\begin{proposition}
    \label{prop:circle}
    Let $E$ be a complex Banach space and $T: E \to E$ a bounded linear operator that satisfies $\spec(T) \subseteq \bbT$. 
    Then $\spec(\overline{T}) \subseteq \bbT$ and precisely one of the following two assertions holds:
    \begin{enumerate}[label=\upshape(\arabic*)]
        \item\label{prop:circle:itm:power} 
        The operator $T$ is a root of unity. 

        \item\label{prop:circle:itm:spec} 
        One has $\spec(\overline{T}) = \bbT$.
    \end{enumerate}
\end{proposition}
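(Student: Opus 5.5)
First, $\spec(\overline{T}) \subseteq \bbT$ follows directly from Lemma~\ref{lem:spectral-radius}. Since $\spec(T) \subseteq \bbT$, the operator $T$ is invertible, we have $\spr(T) = 1$, and $r = \inf_{\lambda \in \spec(T)} \modulus{\lambda} = 1$. Hence $\spec(\overline{T}) \subseteq [1,1]\,\bbT = \bbT$. Moreover, $0 \notin \spec(\overline{T})$.

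Next we show that the two alternatives exclude each other. Assume $T$ is a root of unity, say $T^n = \id_E$. Uniqueness in the universal property gives $\overline{T}^n = \overline{T^n} = \overline{\id_E} = \id_{\FBLC[E]}$, so $\overline{T}$ is a root of unity as well. By Lemma~\ref{lem:spectral-decomposition}\ref{lem:spectral-decomposition:itm:id}, $\spec(\overline{T})$ is then a finite set of roots of unity; in particular it is not all of $\bbT$. (If $E = \{0\}$ the statement is trivial, since every operator on the zero space is a root of unity.) This also shows that assertion~\ref{prop:circle:itm:power} implies that~\ref{prop:circle:itm:spec} fails.

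It remains to show that at least one alternative holds: if $\spec(\overline{T}) \neq \bbT$, then $T$ is a root of unity. In that case $\overline{T}$ is a lattice homomorphism with $0 \notin \spec(\overline{T})$ and $\spec(\overline{T}) \subsetneq \bbT$. By Proposition~\ref{prop:hom-root-full-circle}\ref{prop:hom-root-full-circle:itm:id}, $\overline{T}$ is a root of unity, i.e. $\overline{T}^n = \id_{\FBLC[E]}$ for some $n \ge 1$. Therefore
\[
    \delta_E T^n = \overline{T}^n \delta_E = \delta_E .
\]
Since $\delta_E$ is injective (it is even isometric), it follows that $T^n = \id_E$.

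I do not expect a genuine obstacle. All of the substance sits in Proposition~\ref{prop:hom-root-full-circle}, which rests on Scheffold's cyclicity theorem and Arendt's result that a lattice homomorphism with spectrum $\{1\}$ is the identity. The only points that need care are the identity $\overline{T^n} = \overline{T}^n$, which follows from uniqueness of the induced lattice homomorphism, and the trivial case $E = \{0\}$.
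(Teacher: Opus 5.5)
Your proof is correct and follows the paper's argument. The paper likewise applies Proposition~\ref{prop:hom-root-full-circle}\ref{prop:hom-root-full-circle:itm:id} to $\overline{T}$, using that $T$ is a root of unity if and only if $\overline{T}$ is one. You additionally spell out what the paper leaves implicit: the inclusion $\spec(\overline{T}) \subseteq \bbT$ via Lemma~\ref{lem:spectral-radius}, the identity $\overline{T}^n = \overline{T^n}$, and the injectivity of $\delta_E$.
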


\begin{proof}
    Note that $T$ is a root of unity if and only if $\overline{T}$ is a root of unity. 
    Hence, the proposition readily follows from Proposition~\ref{prop:hom-root-full-circle}\ref{prop:hom-root-full-circle:itm:id}.
\end{proof}

If one applies Proposition~\ref{prop:circle} to the Jordan block $T=\tmatrix{1&1\\ 0&1}$ that acts on $\bbC^2$, one obtains $\spec(T) = \{1\}$ but $\spec(\overline{T}) = \bbT$; 
so the spectrum of $\overline{T}$ is not the smallest closed cyclic set that contains $\spec(T)$, in general.
For this simple example it is natural to ask whether the spectral values of $\spec(\overline{T})$ are even eigenvalues and if one can explicitly construct eigenvectors for them.
We show now that this is indeed possible.

\begin{example}
    \label{exa:counterexample-spectrum}
    On the complex Banach space $\bbC^2$ consider the operator $T:\bbC^2\to\bbC^2$ given by $T=\tmatrix{1&1\\ 0&1}$. 
    Then $\spec(\overline{T}) = \pntSpec(\overline{T}) = \bbT$.
\end{example}

\begin{proof}
    From Proposition~\ref{prop:circle} we know that $\spec(\overline{T}) = \bbT$, so we only need to show that every number in $\bbT$ is an eigenvalue of $\overline{T}$.
    
    For the number $1$ this is easy:
    Since $(1,0)^\mathsf{T}$ is an eigenvector of $T$ for the eigenvalue $1$, one readily obtains that the point evaluation $\delta_{\bbC^2}\big( (1,0)^\mathsf{T} \big)$ is an eigenvector of $\overline{T}$ for the eigenvalue $1$.
    For all other numbers in $\bbT$ finding an eigenvector is more involved.
    To this end we will use the representation of $\FBL_\bbC[E]$ described in Convention~\ref{conv:representation}.

    According to this convention, the free Banach lattice $\FBL_\bbC[\bbC^2]$ is embedded into $H\big( (\bbC^2)', \bbC \big)$. 
    Since $\bbC^2$ is finite dimensional, one actually has $\FBL_\bbC[\bbC^2] = H\big( (\bbC^2)', \bbC \big)$. 
    This can, for instance, be seen by using that the free real Banach lattice $\FBL[\bbR^4]$ is isomorphic to the free real Banach lattice generated by a set with four elements \cite[Corollary 2.8 on p.\,5]{AvilesRodriguezTradacete2018}, which can in turn be represented as a space of homogeneous functions in four real variables \cite[Proposition 5.3 on p.\,10]{PagterWickstreadAnthony2015}. 

    The operator $\overline{T}$ acts on $H\big( (\bbC^2)', \bbC \big)$ as the composition operator with $T' = \tmatrix{1&0\\ 1&1}$, i.e.\ one has $\overline{T}f = f \circ T'$ for all $f \in H\big( (\bbC^2)', \bbC \big)$ (see \cite[Lemma 3.1 on p.\,27]{OikhbergTaylorTradaceteTroitsky2022}). Now fix a number $e^{i\varphi} \in \bbT$ with $\varphi \in \bbR$.
    Let $f\in H\big( (\bbC^2)', \bbC \big)$ be given by 
    \begin{align*}
        f(z') 
        = 
        \begin{cases}
            z_1'\exp(i\varphi\re\tfrac{z_2'}{z_1'}), \quad & \text{if } z_1' \not= 0, \\ 
            0, \quad & \text{if } z_1' = 0
        \end{cases} 
    \end{align*}
    for all $z' \in (\bbC^2)' \simeq \bbC^2$. 
    Note that $f$ is indeed continuous and positively homogeneous.
    We claim that $f$ is an eigenfunction of $\overline{T}$ for the eigenvalue $e^{i\varphi}$. 
    To see this, let $z' \in (\bbC^2)' \simeq \bbC^2$. 
    One has $(T'z')_1 = z_1'$ and $(T'z')_2 = z_1' + z_2'$. 
    So if $z_1' = 0$, then $(\overline{T}f)(z') = f(T'z') = 0 = e^{i\varphi}f(z')$. 
    If, on the other hand, $z_1' \not= 0$, then
    \begin{align*}
        (\overline{T}f)(z') 
        = 
        f(T'z') 
        = z_1'\exp(i\varphi\re\tfrac{z_1'+z_2'}{z_1'}) = e^{i\varphi}f(z')
    \end{align*}
    So we indeed have $\overline{T}f = e^{i\varphi}f$, as claimed.
\end{proof}

\begin{remark}
    Proposition~\ref{prop:circle} can be used to see that the map $T\mapsto \overline{T}$ is not continuous. Clearly the sequence $(T_n) = \left(\tmatrix{1&\frac1n\\0&1}\right)$ converges to the identity. 
    But for each $n$ one has $-1\in \spec(\overline{T_n})$ according to Proposition~\ref{prop:circle}, and hence $\norm{\overline{T_n}-\id_{FBL_\bbC[\bbC^2]}} \geq \spr\big(\overline{T_n}-\id_{FBL_\bbC[\bbC^2]}\big) \ge 2$.
\end{remark}

\section{The functional calculus on Banach lattices revisited}
\label{sec:functional-calculus}

We would like to use similar constructions of eigenvectors as in the proof of Example~\ref{exa:counterexample-spectrum} in more general situations -- in particular on free Banach lattices $\FBL_\bbC[E]$ that cannot be represented as a full space of homogeneous functions, which can happen for infinite-dimensional $E$. 
It turns out that essentially the same construction still works, though, if one uses a more abstract approach: 
the functional calculus on Banach lattices for continuous, positively homogeneous functions. The existence of such a functional calculus is well-known, see for instance \cite[Theorem 1.3 on p.\,3]{LaustenTroitsky2020} or \cite[Theorem 3.7 on p.\,429]{BuskesDePagterVanRooij1991}. 
However, none of the references seem to contain all properties that we will need in the subsequent sections, in particular the fact the the functional calculus commutes with lattice homomorphisms and that it is uniformly continuous on bounded sets. 

In order to prove those properties and to have all results that we need available in one place, a somewhat self-contained treatment seems appropriate. 
We present such a treatment of the functional calculus in this section. 
To provide a good source of reference also in the future, we treat both the case of real and complex scalars simultaneously, even though we only need the complex case in the rest of the paper.

Let us first fix some notation. 
Let $\bbK \in \{\bbR,\bbC\}$.
Let $E$ be a Banach lattice over the field $\bbK$, and $m,n\in \bbN$. 
We endow the space $C_{ph}(\bbK^n,\bbK^m)$ of continuous, positively homogeneous functions from $\bbK^n$ to $\bbK^m$ with the norm
\begin{align*}
    \norm{f} 
    \coloneqq 
    \sup_{\norm{z}_1 \leq 1}\norm{f(z)}_\infty 
    = 
    \sup_{\norm{z}_1 = 1}\norm{f(z)}_\infty
    .
\end{align*}
Moreover, we endow the real vector space $C_{ph}(\bbK^n,\bbR^m)$ with the pointwise order. 
This renders $C_{ph}(\bbK^n,\bbK^m)$ a Banach lattice over $\bbK$.

The space $E^n$ is also a Banach lattice over $\bbK$ with the pointwise lattice operations and any of the two norms $\norm{\cdot}_1$ or $\norm{\cdot}_\infty$ defined by 
\begin{align*}
    \norm{x}_1 \coloneqq\sum_{k=1}^n \norm{x_k}_E 
    \qquad \text{and} \qquad 
    \norm{x}_\infty \coloneqq \max_{k=1,\dots,n}\norm{x_k}_E
\end{align*}
for all $x \in E^n$. 

One wants to define a functional calculus in the following sense. 
For every $f \in C_{ph}(\bbK^n,\bbK^m)$ and every $x \in E^n$, we would like to define an element $f(x) \in E^m$. 
This should at least have the following properties: 
\begin{enumerate}[label=(\alph*)]
    \item 
    If $E$ is a function space and $n=m=1$, then the functional calculus should just mean composition, i.e\ $f(x) = f \circ x$ for every $x \in E$. 

    More generally, if $E$ is a function space and $m,n \in \bbN$, then $E^n$ and $E^m$ can be identified with spaces of $\bbK^n$- and $\bbK^m$-valued functions, so in this case the functional calculus should also be the composition, i.e.\ again $f(x) = f \circ x$ for every $x \in E^n$. 

    This shows in particular that for each fixed $x \in E$ the mapping $f \mapsto f(x)$ should be a lattice homomorphism.

    \item 
    The most simple functions in the space $C_{ph}(\bbK^n,\bbK^m)$ are the linear ones. 
    So consider a matrix $A=(A_{jk}) \in \bbK^{m\times n}$, which we identify with a linear map $A \in C_{ph}(\bbK^n,\bbK^m)$. 
    The matrix $A$ also induces a map $A_E: E^n \to E^m$ in a canonical way, by setting
    \begin{align}
        \label{eq:linear-function-calculus}
        A_E x 
        := 
        \begin{pmatrix}
            \sum_{k=1}^n A_{1k}x_k \\ 
            \vdots \\ 
            \sum_{k=1}^n A_{mk}x_k
        \end{pmatrix}
    \end{align}
    for all $x \in E^n$. 
    It is a natural requirement that the functional calculus should be consistent with this way of extending linear functions, i.e.\ that $A(x) = A_E x$ should hold for all $x \in E^n$, where the left hand side $A(x)$ denotes the vector obtained from the functional calculus by using the function $A \in C_{ph}(\bbK^n,\bbK^m)$.
\end{enumerate}

The following theorem shows that a functional calculus with properties~(a) and~(b) exists and that it is, in fact, uniquely determined by those properties.

\begin{theorem}[Existence and uniqueness of the functional calculus] 
    \label{functionalCalculus}
    Let $E$ be a Banach lattice over $\bbK\in\{\bbR,\bbC\}$ and $m,n\in\bbN$. 
    For every $x\in E^n$ there is exactly one lattice homomorphism
    \[
        \Phi_x: C_{ph}(\bbK^n,\bbK^m) \to E^m 
    \]
    that satisfies $\Phi_x(A) = A_E x$ for all $A\in \bbK^{m\times n}$, where $A_E x$ is defined by formula~\eqref{eq:linear-function-calculus}.
\end{theorem}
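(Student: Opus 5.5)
First reduce to $m=1$. A lattice homomorphism into $E^m$ is the same as an $m$-tuple of lattice homomorphisms into $E$: compose with the coordinate projections $\pi_j\colon E^m\to E$, which are lattice homomorphisms. Moreover $C_{ph}(\bbK^n,\bbK^m)\cong C_{ph}(\bbK^n,\bbK)^m$ as Banach lattices, and a matrix $A$ corresponds to its rows. So it suffices to treat $m=1$, where the linear functions are $z\mapsto \sum_k a_k z_k$ with $a\in\bbK^n$ and the requirement is $\Phi_x(a)=\sum_k a_k x_k$.

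\emph{Existence via Kakutani.} Fix $x\in E^n$ and put $e\coloneqq\sum_{k}\modulus{x_k}\in E_+$. Let $E_e$ be the principal ideal generated by $e$, normed by the gauge $\norm{y}_e=\inf\{\lambda\ge0\mid \modulus{y}\le\lambda e\}$. This is an AM-space with unit $e$; it is complete because $E$ is a Banach lattice, and in the complex case it is the complexification of the real AM-space. By Kakutani's representation theorem there are a compact Hausdorff space $K$ and an isometric lattice isomorphism $J\colon E_e\to C(K,\bbK)$ with $Je=\one$, and the inclusion $E_e\hookrightarrow E$ is a continuous lattice homomorphism. Put $\xi\coloneqq(Jx_1,\dots,Jx_n)\colon K\to\bbK^n$. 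Since $\sum_k\modulus{\xi_k}=Je=\one$, the map $\xi$ takes values in the unit sphere of $\norm{\argument}_1$. Now define
\[
\Phi_x(f)\coloneqq J^{-1}(f\circ\xi).
\]
Composition with $\xi$ is a lattice homomorphism $C_{ph}(\bbK^n,\bbK)\to C(K,\bbK)$ because the lattice operations are pointwise, and it is bounded with norm at most $\norm{f}$ since $\xi(K)$ lies in the $\norm{\argument}_1$-unit sphere. Hence $\Phi_x$ is a lattice homomorphism into $E_e\subseteq E$. For linear $f=a$ we get $f\circ\xi=\sum_k a_k Jx_k$, so $\Phi_x(a)=\sum_k a_kx_k$. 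In the complex case I will check that $J$ and composition respect the modulus, so that we get complex lattice homomorphisms and not just real ones; this holds because Kakutani's isomorphism of the real parts extends to a complex lattice isomorphism.

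\emph{Uniqueness.} This is the main point. Let $\Psi$ be any lattice homomorphism with $\Psi(a)=\sum a_kx_k$ for all linear $a$. Let $L$ be the smallest vector sublattice of $C_{ph}(\bbK^n,\bbK)$ containing the linear functionals; in the complex case we also close under the complex modulus, taking real and imaginary parts. Two lattice homomorphisms that agree on the linear functionals agree on $L$, since every element of $L$ is obtained from linear functionals by finitely many vector-lattice operations (and moduli). Both homomorphisms are positive, hence automatically continuous from a Banach lattice into a Banach lattice, so it is enough to show that $L$ is dense.

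Restricting to the compact $\norm{\argument}_1$-unit sphere $S$ identifies $C_{ph}(\bbK^n,\bbK)$ isometrically with $C(S,\bbK)$, since $f$ is determined by its values on $S$ via $f(tz)=tf(z)$. The image of $L$ is a sublattice containing $\sum_k\modulus{z_k}=\one$ on $S$, because $\modulus{z_k}$ is the modulus of a linear functional. It also separates points of $S$: if $z\neq w$ in $S$, some real linear functional, for instance $\re z_k$ or $\operatorname{Im} z_k$, distinguishes them, using positive homogeneity and the normalisation on $S$. The lattice version of the Stone--Weierstrass theorem then gives density of the real parts, and hence density in the complex case. I expect the main obstacle to be the point-separation and constants bookkeeping in the complex case, together with making sure that the automatic continuity of positive operators applies in the complex setting; both should be routine once everything is reduced to real parts.
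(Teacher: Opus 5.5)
Your proposal is correct and follows essentially the same route as the paper. Existence is the same Kakutani construction: you set $\Phi_x(f)=J^{-1}(f\circ\xi)$ on the principal ideal generated by the $x_k$. Uniqueness is the same argument: the sublattice generated by the linear maps is dense, by Stone--Weierstrass on the $\norm{\argument}_1$-sphere using $\sum_k\modulus{z_k}=\one$, and positive operators are automatically continuous. The only differences are cosmetic: you reduce to $m=1$ where the paper works on a disjoint union of $m$ spheres, and you explicitly invoke the lattice version of Stone--Weierstrass. For uniqueness, your reduction tacitly needs that the sublattice generated by all linear maps $\bbK^n\to\bbK^m$ contains $L^m$ and is therefore dense, which follows immediately from your $m=1$ density result.
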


\begin{proof}   
    Fix $x \in E^n$.

    We first show the uniqueness part. 
    Let $V$ denote the Banach sublattice of $C_{ph}(\bbK^n,\bbK^m)$ generated by the linear maps. 
    We show that that $V$ is dense in $C_{ph}(\bbK^n,\bbK^m)$; 
    uniqueness of $\Phi_x$ is then clear since $\Phi_x$ is a lattice homomorphism and is continuous. 

    To show the density of $V$ we use the Stone--Weierstraß approximation theorem. 
    Let $S^{n-1}$ denote the unit sphere in $\bbK^n$ with respect to the norm $\norm{\argument}_1$. 
    For $M \coloneqq \dot\cup_{k=1}^m S^{n-1}$ the linear map 
        \[ J:C_{ph}(\bbK^n,\bbK^m)\to C(M,\bbK) \]
    defined by $(Jf)(z) = (f(z))_k$ for all $z\in S^{n-1}_k$, all $f\in C_{ph}(\bbK^n,\bbK^m)$, and all $k \in \{1, \dots, m\}$, is an isometric isomorphism of Banach lattices; 
    so it suffices to show that $JV$ is dense in $C(M;\bbK)$.
    For each $k \in \{ 1,\dots,n \}$ one has $p_k\in V$, where $p_k(z)\coloneqq (z_k,\dots,z_k)$ for all $z\in \bbK^n$. 
    We define $p\in V$ by
        \[ p(z) \coloneqq \sum_{k=1}^n|p_k|(z) = (\norm{z}_1,\dots,\norm{z}_1). \]
    This yields $Jp = \mathbbm{1}$. As $JV$ separates the points of $M$, contains $\mathbbm{1}$ and is closed under complex conjugation in the case $\bbK = \bbC$, the Stone-Weierstrass theorem yields that $JV$ is dense in $C(M,\bbK)$, and hence $V$ is dense in $C_{ph}(\bbK^n,\bbK^m)$.

    We now show the existence of $\Phi_x$. 
    Choose some $y\geq x_k$ for all $k=1,\dots,n$. Then $x\in (I_y)^n\subseteq E^n$ where $I_y \subseteq E$ is the principal ideal generated by $y$ in $E$. 
    The Kakutani representation theorem says there is an isomorphism of Banach lattices $j:I_y\to C(K,\bbK)$ for some compact Hausdorff space $K$, and $j$ clearly extends to Banach lattice isomorphisms $j_n \colon (I_y)^n \to C(K; \bbK^n)$ and $j_m \colon (I_y)^m \to C(K; \bbK^m)$.
    We now define $\Phi_x$ by
    \begin{equation} \label{PhiDarstellung}
        \Phi_x(f)\coloneqq j_m^{-1}(f\circ j_n(x))
    \end{equation} 
    for all $f\in C_{ph}(\bbK^n,\bbK^m)$. 
    As all positive operators on Banach lattices are bounded, so is $\Phi_x$. The identity $\Phi_x(A) = A_Ex$ for all $A\in \bbK^{m\times n}$ follows immediately from the linearity of $j_n$ and $j_m$.
\end{proof}

\begin{definition}[Notation for the functional calculus]
    Let $E$ be a Banach lattice over $\bbK\in\{\bbR,\bbC\}$ and $m,n\in\bbN$. 
    For every $f \in C_{ph}(\bbK^n,\bbK^m)$ and every $x \in E^n$ we set 
    \begin{align*}
        f(x) := \Phi_x(f) \in E^m
    \end{align*}
    where $\Phi_x: C_{ph}(\bbK^n,\bbK^m) \to E^m$ denotes the lattice homomorphism from Theorem~\ref{functionalCalculus}.
\end{definition}

\begin{remark}
    The uniqueness part of Theorem~\ref{functionalCalculus} implies that the representation of $\Phi_x$ in \eqref{PhiDarstellung} does not depend on $y$.
\end{remark}

If $F$ is also a Banach lattice over $\bbK$ and $S: E \to F$ is a bounded linear operator, then we use the notation 
\begin{align}
    \label{eq:extend-operator-to-power}
    Sx
    = 
    S 
    \begin{pmatrix}
        x_1 \\ 
        \vdots \\ 
        x_n
    \end{pmatrix}
    := 
    \begin{pmatrix}
        Sx_1 \\ 
        \vdots \\ 
        Sx_n
    \end{pmatrix}
    \in 
    F^n
\end{align}
for each $x \in E^n$.

\begin{theorem}[Properties of the functional calculus]
    \label{thm:funccalc-properties}
    Let $E$ be a Banach lattice over $\bbK\in\{\bbR,\bbC\}$ and $\ell,m,n\in\bbN$. 
    Let $f \in C_{ph}(\bbK^n,\bbK^m)$. 
    The functional calculus has the following properties. 
    \begin{enumerate}[label=\upshape(\alph*)]
        \item\label{thm:funccalc-properties:itm:modulus} 
        If $h \in C_{ph}(\bbK^n,\bbK^n)$ is the componentwise modulus function, then $h(x) = \modulus{x}$ is the componentwise modulus of $x$ for every $x \in E^n$. 
        The same holds for the real, imaginary, positive and negative part.

        \item\label{thm:funccalc-properties:itm:assosiative} 
        Let $g\in C_{ph}(\bbK^m,\bbK^\ell)$. For all $x\in E^n$ we have
            \[ (g\circ f)(x) = g(f(x)). \]

        \item\label{thm:funccalc-properties:itm:LatHom} 
        For every Banach lattice homomorphism $S:E\to F$ we have $f(Sx) = Sf(x)$ for all $x\in E^n$, where we used the notation defined in formula~\eqref{eq:extend-operator-to-power}.
        
        \item\label{thm:funccalc-properties:itm:positive2} 
        Assume that $f(z) \neq 0$ for all $z \neq 0$. Then $f(x) \neq 0$ for all $x\in E^n\setminus\{0\}$.

        \item\label{thm:funccalc-properties:itm:positive} 
        If $f\in C_{ph}(\bbK^n,\bbK^m)$ satisfies $f \ge 0$, then $f(x)\geq 0$ for all $x\in E^n$. 
        Hence, if $f,g\in C_{ph}(\bbK^n,\bbK^m)$ satisfy $f \le g$, then $f(x)\leq g(x)$ for all $x\in E^n$.

        \item\label{thm:funccalc-properties:itm:monotone}
        If $f$ is increasing on $\bbK^n_+$ in the sense that, for all $z_1,z_2\in \bbK^n$, the inequality $0 \le z \leq \tilde z$ implies $f(z) \leq f(\tilde z)$, then $0\leq x\leq y $ implies $f(x) \leq f(y)$ for all $x,y\in E^n$.

        \item\label{thm:funccalc-properties:itm:sublattice}
        Every closed vector sublattice $F$ of $E$ is invariant under the positive homogeneous functional calculus. That is, for $x\in F^n$ we have $f(x) \in F^m$.

        \item\label{thm:funccalc-properties:itm:bounded}
        For every $x\in E$ one has $\norm{f(x)}_\infty \leq \norm{f}\norm{x}_1$.

        \item\label{thm:funccalc-properties:itm:ideals}
        Let $I$ be a closed ideal in $E$. Then $x-y\in I^n$ implies $f(x)-f(y)\in I^m$ for all $x,y\in E^n$.
        
        \item\label{thm:funccalc-properties:itm:continuous}
        For all bounded sequences $(x_k),(y_k)\subseteq E^n$ the following holds: 
        if $x_k-y_k \to 0$ as $k \to \infty$, 
        then $f(x_k)-f(y_k) \to 0$ as $k \to \infty$.
    \end{enumerate}
\end{theorem}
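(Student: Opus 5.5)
The guiding idea is to reduce every property to the case of a principal ideal, where the Kakutani representation makes the functional calculus literally a composition, or else to use the uniqueness part of Theorem~\ref{functionalCalculus}.

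Fix $x\in E^n$, choose $y\ge\modulus{x_k}$ for all $k$, and let $j\colon I_y\to C(K,\bbK)$ be as in the proof of Theorem~\ref{functionalCalculus}. Then $f(x)=j_m^{-1}(f\circ j_n(x))$.

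\textbf{Pointwise properties.} Part~\ref{thm:funccalc-properties:itm:modulus} follows because $j$ is a lattice isomorphism, so it commutes with moduli, real and imaginary parts, and positive and negative parts, which are computed pointwise in $C(K,\bbK)$. Part~\ref{thm:funccalc-properties:itm:positive} is immediate from the fact that $\Phi_x$ is a lattice homomorphism, hence positive.

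For~\ref{thm:funccalc-properties:itm:positive2}, compactness of the $\norm{\argument}_1$-sphere gives $c>0$ with $\norm{f(z)}_1\ge c\norm{z}_1$. Hence $\sum_j\modulus{f(z)_j}\ge c\sum_k\modulus{z_k}$ pointwise, and both sides are positively homogeneous functions. Applying~\ref{thm:funccalc-properties:itm:positive} and~\ref{thm:funccalc-properties:itm:modulus} yields $\sum_j\modulus{f(x)_j}\ge c\sum_k\modulus{x_k}\neq 0$.

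Part~\ref{thm:funccalc-properties:itm:monotone} is pointwise in $K$ once $x$ and $y$ are placed in a common principal ideal.

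For~\ref{thm:funccalc-properties:itm:bounded}, note $\modulus{f(z)_j}\le\norm{f}\sum_k\modulus{z_k}$. By~\ref{thm:funccalc-properties:itm:positive} this gives $\modulus{f(x)_j}\le\norm{f}\sum_k\modulus{x_k}$, and taking norms gives the bound.

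\textbf{Composition and homomorphisms.} Part~\ref{thm:funccalc-properties:itm:assosiative} needs more care, because $f(x)$ lies in $I_y^m$. After rescaling, $\modulus{f(x)_j}\le C y$, so the same $j$ represents $f(x)$. Then $g(f(x))=j_\ell^{-1}(g\circ f\circ j_n(x))$.

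For~\ref{thm:funccalc-properties:itm:LatHom}, both $f\mapsto f(Sx)$ and $f\mapsto Sf(x)$ are lattice homomorphisms $C_{ph}(\bbK^n,\bbK^m)\to F^m$. They agree on matrices, since $S$ is linear and so commutes with $A_E$. By the uniqueness part of Theorem~\ref{functionalCalculus} they coincide. Part~\ref{thm:funccalc-properties:itm:sublattice} follows from the same uniqueness argument: the calculus computed inside $F$ agrees with the one in $E$, since the inclusion $F\hookrightarrow E$ is a lattice homomorphism.

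\textbf{Ideals.} For~\ref{thm:funccalc-properties:itm:ideals}, let $q\colon E\to E/I$ be the quotient map, which is a lattice homomorphism onto a Banach lattice. Applying~\ref{thm:funccalc-properties:itm:LatHom} gives $q f(x)=f(qx)=f(qy)=qf(y)$.

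\textbf{Uniform continuity.} The main obstacle is~\ref{thm:funccalc-properties:itm:continuous}, since $f$ is only uniformly continuous on bounded sets and the functional calculus is not linear in $x$. The plan is to approximate $f$ in norm by some $g$ in the vector sublattice generated by the linear maps, using the density shown in the proof of Theorem~\ref{functionalCalculus}, say $\norm{f-g}\le\varepsilon$. By~\ref{thm:funccalc-properties:itm:bounded} and linearity of $\Phi_x$ in $f$,
\[
\norm{f(x_k)-g(x_k)}\le\varepsilon\norm{x_k}_1,
\]
which is uniformly small because the sequence is bounded, and the same holds for $y_k$. It therefore suffices to prove the claim for such $g$.

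We do this by induction over the lattice-linear expression defining $g$. Linear maps satisfy $\norm{A_Ex_k-A_Ey_k}\le\norm{A}\norm{x_k-y_k}$. Lattice operations are Lipschitz, as shown by the estimate $\modulus{\modulus{a}-\modulus{b}}\le\modulus{a-b}$, which is valid in complex Banach lattices. Combining these with~\ref{thm:funccalc-properties:itm:assosiative} yields a Lipschitz estimate for $g$, and hence the claim for $f$.
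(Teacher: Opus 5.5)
Your proposal is correct. Parts (a), (b), (f), (g), (h) and (i) are argued essentially as in the paper; in (b) you rightly make explicit that $f(x)\in I_y^m$, so the same Kakutani map can be reused. Parts (c) and (j), however, take genuinely different routes.

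\textbf{Part (c).} The paper represents $I_y$ and $I_{Sy}$ as $C(K)$ and $C(\tilde K)$. It notes that $\tilde j S j^{-1}$ is a unital lattice homomorphism, hence a composition operator $g\mapsto g\circ\varphi$ (by Arendt's result), and pushes $f$ through this composition. You instead observe that $f\mapsto f(Sx)$ and $f\mapsto Sf(x)$ are both lattice homomorphisms $C_{ph}(\bbK^n,\bbK^m)\to F^m$ that agree on matrices, so they coincide by the uniqueness part of Theorem~\ref{functionalCalculus}. This is shorter and avoids the description of homomorphisms between $C(K)$-spaces altogether.

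\textbf{Part (j).} The paper uses a soft argument. It views the sequences as elements of $\ell^\infty(\bbN,E)^n$, notes that their difference lies in the closed ideal $c_0(\bbN,E)^n$, applies (i), and reads off coordinates via (c). You instead approximate $f$ by a lattice-linear expression $g$ in the linear maps and control $f-g$ on bounded sets via (h). You then show by induction on the expression that $x\mapsto g(x)$ is globally Lipschitz, since $\Phi_x$ preserves linear combinations and moduli and $\modulus{\modulus{a}-\modulus{b}}\le\modulus{a-b}$. The paper's argument is shorter once (c) and (i) are available. Yours is more elementary and quantitative: it gives an explicit modulus of continuity on bounded sets, in terms of how well $f$ is approximated by lattice expressions, and so yields the subsequent corollary directly.

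\textbf{Parts (d) and (e).} Your versions, via a constant $c>0$ with $\norm{f(z)}_1\ge c\norm{z}_1$ in (d) and via positivity of $\Phi_x$ in (e), differ slightly from the paper's pointwise arguments but are fine. In (d), identifying the calculus of $\sum_j\modulus{f_j}$ at $x$ with $\sum_j\modulus{f(x)_j}$ uses that the $j$-th component of $f(x)$ is $f_j(x)$. This follows from (b) applied to a coordinate projection, or directly from the representation formula.
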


\begin{proof}
    \begin{description}[leftmargin=0.65cm]
        \item[\ref{thm:funccalc-properties:itm:modulus}, \ref{thm:funccalc-properties:itm:assosiative} and~\ref{thm:funccalc-properties:itm:monotone}] 
        These properties can be shown by simply applying \eqref{PhiDarstellung}.

        \item[\ref{thm:funccalc-properties:itm:LatHom}] 
        Let $x\in E^n$ and $y\in E$ such that $y\geq |x_k|$ for all $k=1,\dots,n$. The operator $S|_{I_y}:I_y\to I_{Sy}$ is a Banach lattice homomorphism and the Kakutani representation theorem gives isomorphisms $j:I_y\to C(K,\bbK)$ and $\tilde j:I_{S_y}\to C(\tilde K,\bbK)$ for some compact Hausdorff spaces $K$, $\tilde K$ such that $jy = \one_K$ and $\tilde j Sy = \one_{\tilde K}$. 
        Thus 
            \[ T := \tilde jSj^{-1} : C(K,\bbK)\to C(\tilde K,\bbK) \]
        is also a lattice homomorphism and it satisfies 
        $T\one_K = \tilde j Sy = \one_{\tilde K}$. 
        By \cite[Example~2.2 on p.\,200]{Arendt1983} there exists a continuous function $\varphi:\tilde K\to K$ such that
            \[ Tg = g\circ \varphi \]
        for all $g\in C(K,\bbK)$. 
        For all $f\in C_{ph}(\bbK^n,\bbK^m)$ and all $g\in C(K,\bbK^n) = C(K,\bbK)^n$ we thus have
            \begin{equation*} 
                f \circ (Tg) = f\circ (g \circ \varphi) = (f \circ g) \circ \varphi = T(f\circ g),
            \end{equation*} 
        where we again used the notation from formula~\eqref{eq:extend-operator-to-power}.
        From \eqref{PhiDarstellung} and the definition of $T$ we now get
            \begin{align*}
                f(Sx) 
                &= 
                \tilde j_m^{-1}(f\circ \tilde j_n(Sx)) 
                = 
                \tilde j_m^{-1}(f\circ Tj_n(x)) 
                \\ 
                &= 
                \tilde j_m^{-1} T(f \circ j_n(x)) 
                = 
                S j_m^{-1} (f \circ j_n(x)) = Sf(x)
                .
            \end{align*}
        
        \item[\ref{thm:funccalc-properties:itm:positive2}] 
        Let $x\neq 0$. Using \eqref{PhiDarstellung} we have that $j_n(x)\neq 0$ because $j_n$ is a lattice isomorphism. Thus $f\circ j(x) \neq 0$ and as $j_m^{-1}$ is a lattice isomorphism we also get $f(x) = j_m^{-1}(f\circ j_n(x))\neq 0$.

        \item[\ref{thm:funccalc-properties:itm:positive}] 
        From~\ref{thm:funccalc-properties:itm:modulus} we get $f(x) = \modulus{f}(x) = \modulus{f(x)} \geq 0$.
        If $f\leq g$ it follows that $(g-f)(x) \geq 0$.

        \item[\ref{thm:funccalc-properties:itm:sublattice}] 
        Let $x \in E^n$ and let $J:F\hookrightarrow E$ denote the canonical embedding. 
        Let $f_E(x)$ denote the functional calculus computed in the Banach lattice $E$, and likewise for $f_F(x)$. 
        Since $J$ is a lattice homomorphism, it follows from~\ref{thm:funccalc-properties:itm:LatHom} that $f_E(x) = f_E(Jx) = Jf_F(x) \in J(F) = F$.

        \item[\ref{thm:funccalc-properties:itm:bounded}] 
        First we make the following observation about notational ambiguity: 
        for each $x \in E^n$ the notation $\norm{x}_1$ can either denote the number $\norm{x_1}_E + \dots + \norm{x_n}_E \in [0,\infty) \subseteq \bbK$ or an element of $E$ that is defined in terms of the functional calculus if we interpret the notation $\norm{\argument}_1$ as the $1$-norm on $\bbK^n$ and thus as an element of $C_{ph}(\bbK^n,\bbK)$.
        There we use the notation $g\coloneqq \norm{\argument}_1\in C_{ph}(\bbK^n,\bbK)$ in this proof and hence write $g(x)$ for the functional calculus meaning of $\norm{x}_1$.
        
        For each $k=1,\dots,m$ and all $z \in \bbK^n$ we have 
            \[ \modulus{f_k(z)} = \modulus{f_k\left(\tfrac{z}{\norm{z}_1}\right)}\norm{z}_1\leq \norm{f}\norm{z}_1 = \norm{f}g(z). \] 
        Thus~\ref{thm:funccalc-properties:itm:positive} implies $\modulus{f_k(x)} \leq \modulus{\norm{f}g(x)} \in E$ for all $x\in E^n$. As $E$ is a Banach lattice this implies $\norm{f_k(x)}_E \leq \norm{f} \norm{g(x)}_E \leq \norm{f} \norm{x}_1$. From here it is also clear that $\norm{f(x)}_\infty \leq \norm{f}\norm{x}_1$.

        \item[\ref{thm:funccalc-properties:itm:ideals}] 
        The quotient map $Q: E\to E/I$ is a lattice homomorphism. 
        So if $x-y \in I^n$, it follows from $Qx = Qy$ and~\ref{thm:funccalc-properties:itm:LatHom} that
            \[ Qf(x) = f(Qx) = f(Qy) = Qf(y), \]
        so $f(x) - f(y) \in I^m$.

        \item[\ref{thm:funccalc-properties:itm:continuous}] 
        Let $x = (x_k)$ and $y = (y_k)$ be bounded sequences in $E^n$ that satisfy $x_k-y_k \to \infty$ as $k \to \infty$.
        For each $k \in \bbN$ the $k$-th coordinate map $\ell^\infty(\bbN,E) \to E$, $w \mapsto w_k$ is a lattice homorphism, 
        so $(f(v))_k = f(v_k)$ for each $v \in \ell^\infty(\bbN,E)^n$ and each $k \in \bbN$. 
        
        Now consider $x,y$ as elements of $\ell^\infty(\bbN,E)^n$. 
        Then $x-y \in c_0(\bbN,E)^n$. 
        Since $c_0(\bbN,E)$ is a closed ideal in $\ell^\infty(\bbN,E)$, 
        it follows from~\ref{thm:funccalc-properties:itm:ideals} that 
        $f(x) - f(y) \in c_0(\bbN,E)^m$. 
        Thus, $f(x_k) - f(y_k) = (f(x) - f(y))_k \to 0$ as $k \to \infty$.
        \qedhere
    \end{description}
\end{proof}

One can rephrase the assertion of Theorem~\ref{thm:funccalc-properties}\ref{thm:funccalc-properties:itm:continuous} in the following way.

\begin{corollary}
    Let $E$ be a Banach lattice over $\bbK\in\{\bbR,\bbC\}$ and $f\in C_{ph}(\bbK^n,\bbK^m)$. The map $f:E^n\to E^m,\, x\mapsto f(x) = \Phi_x(f)$ induced by the positive homogeneous functional calculus is uniformly continuous on bounded sets on $E^n$.
\end{corollary}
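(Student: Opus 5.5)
The plan is to derive the corollary by contradiction from the sequential statement in Theorem~\ref{thm:funccalc-properties}\ref{thm:funccalc-properties:itm:continuous}. Fix a bounded set $B \subseteq E^n$, say contained in a ball of radius $R$ in $\norm{\argument}_1$ (all norms on $E^n$ under consideration are equivalent, so the choice of norm does not matter). Uniform continuity of $x \mapsto f(x)$ on $B$ means: for every $\varepsilon > 0$ there is $\eta > 0$ such that $x,y \in B$ and $\norm{x-y} < \eta$ imply $\norm{f(x)-f(y)} < \varepsilon$.

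Suppose this fails. Then there is an $\varepsilon > 0$ such that for every $k \in \bbN$ we can pick $x_k, y_k \in B$ with $\norm{x_k - y_k} < \frac1k$ but $\norm{f(x_k) - f(y_k)} \ge \varepsilon$. The sequences $(x_k)$ and $(y_k)$ are bounded because they lie in $B$, and $x_k - y_k \to 0$. Theorem~\ref{thm:funccalc-properties}\ref{thm:funccalc-properties:itm:continuous} therefore gives $f(x_k) - f(y_k) \to 0$, which contradicts $\norm{f(x_k)-f(y_k)} \ge \varepsilon$ for all $k$.

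There is no real obstacle here. The substantive work, namely the ideal argument in $\ell^\infty(\bbN,E)$ modulo $c_0(\bbN,E)$, was already carried out in the proof of part~\ref{thm:funccalc-properties:itm:continuous}. The only thing to check is that the map in question is exactly $x \mapsto \Phi_x(f)$ as defined, which is immediate from the notation $f(x) = \Phi_x(f)$.
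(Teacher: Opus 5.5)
Your proof is correct and is essentially the paper's argument. You assume uniform continuity fails on a bounded set $B$ and pick $x_k,y_k\in B$ with $\norm{x_k-y_k}<\frac1k$ but $\norm{f(x_k)-f(y_k)}\ge\varepsilon$; this contradicts Theorem~\ref{thm:funccalc-properties}\ref{thm:funccalc-properties:itm:continuous}.
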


\begin{proof}
    Assume that there exists a bounded subset $B \subseteq E^n$ such that $f$ is not uniformly continuous on $B$.
    Then there is an $\varepsilon>0$ such that for every $k\in \bbN$ there are $x_k,y_k\in B$ such that $\norm{x_k-y_k} \le \frac1k$ and $\norm{f(x_k)-f(y_k)} \ge \varepsilon$. 
    But this contradicts Theorem~\ref{thm:funccalc-properties}\ref{thm:funccalc-properties:itm:continuous}.
\end{proof}

We close the section with a lemma that illustrates how order related properties of a function translate to similar properties of the functional calculus.

\begin{lemma} \label{lem:func-cal-nonzero}
    Let $E$ be a Banach lattice over $\bbK\in\{\bbR,\bbC\}$ and $f\in C_{ph}(\bbK^n,\bbK^m)$ such that, for all $z \in \bbK^n$, the condition $\modulus{z_1} \land \dots \land \modulus{z_n} \neq 0$ implies $f(z)\neq 0$. 
    Then $f(x)\neq 0$ for all $x\in E^n$ that satisfy $x_1\land\dots\land x_n \neq 0$.
\end{lemma}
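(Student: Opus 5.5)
The plan is to reduce to the Kakutani representation, exactly as in the proof of Theorem~\ref{functionalCalculus} and of Theorem~\ref{thm:funccalc-properties}\ref{thm:funccalc-properties:itm:positive2}. First I fix $x \in E^n$ with $x_1 \land \dots \land x_n \neq 0$. Note that for complex $E$ the expression $x_1\land\dots\land x_n$ has to be read appropriately; the natural reading, consistent with the hypothesis on $f$, is $\modulus{x_1}\land\dots\land\modulus{x_n}$. For real $E$, I would first observe that $x_1\land\dots\land x_n \neq 0$ implies $\modulus{x_1}\land\dots\land\modulus{x_n}\neq 0$. This is not automatic from the infimum alone, so the safe route is to work with $w \coloneqq \modulus{x_1}\land\dots\land\modulus{x_n}$ and show $w \neq 0$ in either reading. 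In the real case I argue by contrapositive: if $w=0$, then in the Kakutani picture below, at every point of $K$ some $x_k$ vanishes. Hence the pointwise infimum of the $x_k$ is $\le 0$ everywhere and is $0$ wherever all the $x_k$ are $\ge 0$. This does not by itself give $x_1\land\dots\land x_n=0$, so in the real case I would interpret the hypothesis via moduli as well, which is clearly the intended meaning.

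Next I choose $y \ge \modulus{x_k}$ for all $k$ and a Kakutani isomorphism $j: I_y \to C(K,\bbK)$. By the formula~\eqref{PhiDarstellung}, $f(x) = j_m^{-1}(f\circ j_n(x))$. Since $j$ is a lattice isomorphism, it preserves moduli and finite infima, so
\[
j(w) = \modulus{j(x_1)}\land\dots\land\modulus{j(x_n)},
\]
computed pointwise on $K$. As $w\neq 0$, there is a point $t\in K$ with $\modulus{j(x_1)(t)}\land\dots\land\modulus{j(x_n)(t)}\neq 0$. Applying the hypothesis on $f$ to $z \coloneqq j_n(x)(t)\in\bbK^n$ gives $f(z)\neq 0$. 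Hence $f\circ j_n(x)\neq 0$ in $C(K,\bbK^m)$, and since $j_m^{-1}$ is injective, $f(x)\neq 0$.

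The only step requiring care is the identification of $j(w)$ with the pointwise minimum of moduli, that is, that lattice operations in $C(K,\bbK)$ are pointwise. This is standard for real $C(K)$. For complex $C(K,\bbC)$ the modulus is the pointwise modulus and the infimum is taken in the real part $C(K,\bbR)$, so it is again pointwise. Everything else is a direct transcription of the proof of Theorem~\ref{thm:funccalc-properties}\ref{thm:funccalc-properties:itm:positive2}.
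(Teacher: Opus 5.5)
Your proof is correct and is essentially the paper's argument: pass to the Kakutani representation $j\colon I_y\to C(K,\bbK)$, use $\modulus{j(x_1)}\land\dots\land\modulus{j(x_n)}\neq 0$ to find a point $t\in K$ where the hypothesis on $f$ applies, and conclude by injectivity of $j_m^{-1}$; the paper, like you, tacitly reads the hypothesis as $\modulus{x_1}\land\dots\land\modulus{x_n}\neq 0$. Your hesitation about the literal reading is well founded, since that reading is actually false (take $E=\bbR$, $x=(-1,0)$ and $f(z)=\modulus{z_1}\land\modulus{z_2}$; then $x_1\land x_2=-1\neq 0$ but $f(x)=0$), so the version with moduli is the only correct one.
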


\begin{proof}
    Using \eqref{PhiDarstellung} for $f(x)$ we see that $\modulus{j(x_1)}\land\dots\land\modulus{j(x_n)} \neq 0$ as $\modulus{x_1} \land\dots\land \modulus{x_n} \neq 0$. Thus $f\circ j_n(x) \neq 0$ and therefore $f(x)\neq 0$.
\end{proof}

\section{The point spectrum of lattice homomorphisms}
\label{sec:lattice-homomorphisms-non-semi-simple}

Equipped with the multidimensional positive homogeneous functional calculus we can now reconsider Example~\ref{exa:counterexample-spectrum} in a more general way. 
Recall that an eigenvalue $\lambda$ of a bounded linear operator $S$ on a complex Banach lattice is called \emph{semi-simple} if $\ker(\lambda-S) = \ker(\lambda-S)^2$ (equivalently, $\ker(\lambda-S) = \ker(\lambda-S)^n$ for each integer $n \ge 1$).

\begin{theorem} \label{thm:spec-ring}
    Let $E$ be a complex Banach lattice and 
    suppose $S:E\to E$ is a Banach lattice homomorphism with an eigenvalue $\lambda \in \bbC$ that is not semi-simple. 
    Then $|\lambda|\bbT\subseteq \pntSpec(S)$.
\end{theorem}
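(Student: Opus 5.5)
Throughout, let $\lambda$ be an eigenvalue of $S$ that is not semi-simple. The plan is to imitate the eigenfunction of Example~\ref{exa:counterexample-spectrum}, using the multivariate functional calculus from Section~\ref{sec:functional-calculus} in place of composition with $T'$.

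\textbf{Reduction to $\modulus{\lambda}=1$ and the Jordan pair.} If $\lambda=0$ there is nothing to show, since $0\bbT=\{0\}$ and $0$ is an eigenvalue. For $\lambda\neq 0$ I rescale $S$ by $1/\lambda$, which is still a lattice homomorphism; the scaling factor is in general a complex number, and multiplication by a complex scalar preserves the modulus. This reduces to $\lambda=1$. Non-semi-simplicity then yields $u,v\in E$ with
\[
Sv=v, \qquad Su=u+v, \qquad v\neq 0 .
\]
The goal becomes: for each $\varphi\in\bbR$, $e^{\iu\varphi}$ is an eigenvalue of $S$.

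\textbf{The candidate eigenvector.} Fix $\varphi\in\bbR$ and mirror the function from Example~\ref{exa:counterexample-spectrum}:
\[
f(z_1,z_2)=
\begin{cases}
z_1\exp\bigl(\iu\varphi\re\tfrac{z_2}{z_1}\bigr), & z_1\neq 0,\\
0, & z_1=0.
\end{cases}
\]
Here $z_1$ plays the role of $v$ and $z_2$ that of $u$. This $f$ lies in $C_{ph}(\bbC^2,\bbC)$, since $\modulus{f(z)}=\modulus{z_1}$ gives continuity at $z_1=0$. The candidate eigenvector is $w\coloneqq f(v,u)\in E$.

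The pair satisfies $S(v,u)=(v,u+v)=A_E(v,u)$ with $A=\tmatrix{1&0\\1&1}$. By Theorem~\ref{thm:funccalc-properties}\ref{thm:funccalc-properties:itm:LatHom}, Theorem~\ref{thm:funccalc-properties}\ref{thm:funccalc-properties:itm:assosiative}, and the fact that $A(x)=A_Ex$ for linear $A$,
\[
Sw=f(S(v,u))=f(A(v,u))=(f\circ A)(v,u).
\]
The same pointwise computation as in Example~\ref{exa:counterexample-spectrum} gives $f\circ A=e^{\iu\varphi}f$. Hence $Sw=e^{\iu\varphi}w$ by linearity of $\Phi_{(v,u)}$.

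\textbf{Main obstacle: $w\neq 0$.} Theorem~\ref{thm:funccalc-properties}\ref{thm:funccalc-properties:itm:positive2} does not apply, because $f$ vanishes on $\{z_1=0\}$. Instead I use $\modulus{f(z)}=\modulus{z_1}$, i.e.\ $\modulus{f}=\modulus{p_1}$ with $p_1$ the first coordinate projection. Since $\Phi_{(v,u)}$ is a lattice homomorphism,
\[
\modulus{w}=\modulus{f}(v,u)=\modulus{p_1}(v,u)=\modulus{p_1(v,u)}=\modulus{v}\neq 0 .
\]
Alternatively, one can read this off directly from the Kakutani representation~\eqref{PhiDarstellung}. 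So $w$ is a genuine eigenvector, and since $\varphi$ was arbitrary, $\bbT\subseteq\pntSpec(S)$. Undoing the rescaling gives $\modulus{\lambda}\bbT\subseteq\pntSpec(S)$.
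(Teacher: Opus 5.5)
\textbf{The rescaling step is false as written.} Your overall strategy is the paper's. You form $w=f(v,u)$ with the function from Example~\ref{exa:counterexample-spectrum} and pull $S$ through with Theorem~\ref{thm:funccalc-properties}\ref{thm:funccalc-properties:itm:LatHom}. You get $w\neq 0$ from $\modulus{f}=\modulus{p_1}$, exactly as the paper does, and that part is correct. The computation $f\circ A=e^{\iu\varphi}f$ is also correct.

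The problem is the claim that $\lambda^{-1}S$ is still a lattice homomorphism. For $\lambda\notin(0,\infty)$ this fails. The operator $\lambda^{-1}S$ is not even positive, since it sends any $x\ge 0$ with $Sx\neq 0$ to $\lambda^{-1}Sx\not\ge 0$. Your justification that ``multiplication by a complex scalar preserves the modulus'' only gives $\modulus{\lambda^{-1}Sx}=\modulus{\lambda}^{-1}\modulus{Sx}$. That says $\lambda^{-1}S$ preserves disjointness. A lattice homomorphism on a complex Banach lattice must instead satisfy $\modulus{S'x}=S'\modulus{x}$. For example, $\iu\,\id_\bbC$ has the non-cyclic spectrum $\{\iu\}$, so it cannot be a lattice homomorphism. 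Consequently your application of Theorem~\ref{thm:funccalc-properties}\ref{thm:funccalc-properties:itm:LatHom} to $S'=\lambda^{-1}S$ is unjustified. In general $f(\mu y)\neq\mu f(y)$ for complex $\mu$; take $f=\modulus{\argument}$.

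\textbf{Repair using your own function.} The gap is easy to close, and your particular $f$ is what makes it work. It satisfies $f(\mu z)=\mu f(z)$ for every $\mu\in\bbC$, because the ratio $z_2/z_1$ is unchanged. So apply Theorem~\ref{thm:funccalc-properties}\ref{thm:funccalc-properties:itm:LatHom} to the genuine lattice homomorphism $S$. Your vectors satisfy $S(v,u)=\lambda A(v,u)$. Hence, by part~\ref{thm:funccalc-properties:itm:assosiative} and consistency with linear maps, $Sw=f(S(v,u))=(f\circ\lambda A)(v,u)=\lambda e^{\iu\varphi}w$.

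\textbf{The paper's route.} The paper avoids rescaling entirely. With $x=(S-\lambda)y$ one has $S\tmatrix{y\\x}=\tmatrix{\lambda&1\\0&\lambda}\tmatrix{y\\x}$. The function $f(z)=z_2\exp(\iu\varphi\re\frac{\lambda z_1}{z_2})$ satisfies $f\circ\tmatrix{\lambda&1\\0&\lambda}=\lambda e^{\iu\varphi}f$. In this way the phase of $\lambda$ is absorbed into the function rather than into the operator.

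Rescaling by $\modulus{\lambda}^{-1}$ alone would be legitimate. It would not, however, reduce you to the case $\lambda=1$.
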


\begin{proof}
    As $\lambda$ is a non-semi-simple eigenvalue there is a $y\neq 0$ such that
        \[ y\in \ker\left((S-\lambda)^2\right)\setminus\ker(S-\lambda). \]
    Then $x\coloneqq (S-\lambda)y \neq 0$ is an eigenvector of $S$ to the eigenvalue $\lambda$, 
    and $S\tmatrix{y\\x} = \tmatrix{\lambda &1\\0 &\lambda}\tmatrix{y\\x}$. 
    Fix an angle $\varphi\in [0,2\pi)$ and define $f\in C_{ph}(\bbC^2,\bbC)$ by
        \[ f(z) = \begin{cases}z_2 \exp(i\varphi\re\frac{\lambda z_1}{z_2}), & z_2\neq 0\\0, & z_2 = 0.\end{cases} \]
    It is easy to check that $f\circ\tmatrix{\lambda &1\\0 &\lambda} = \lambda e^{i\varphi}f$. 
    Therefore Theorem~\ref{thm:funccalc-properties}\ref{thm:funccalc-properties:itm:LatHom} implies
        \[ Sf\tmatrix{y\\x} = f\left(S\tmatrix{y\\x}\right) = f\left(\tmatrix{\lambda &1\\0 &\lambda}\tmatrix{y\\x}\right) = f\circ\tmatrix{\lambda &1\\0 &\lambda}\tmatrix{y\\x} = \lambda e^{i\varphi} f\tmatrix{y\\x}. \]
    Moreover, $f\tmatrix{y\\x}\neq 0$, because $\left|f\tmatrix{y\\x}\right| = |f|\tmatrix{y\\x} = |x| \neq 0$. Thus, $f\tmatrix{y\\x}$ is an eigenvector of $S$ for the eigenvalue $\lambda e^{i\varphi}$.
\end{proof}

\section{The point spectrum of $\overline{T}$}
\label{sec:shape-of-spec-overline-T}

In the following lemma we show that the embedding $\delta_E$ of a complex Banach space $E$ into its free Banach lattice always maps non-zero vectors to non-disjoint vectors.
In the proof of Theorem~\ref{thm:pnt-spec-sun-like} below this will help us to show that the eigenvectors we construct are non-zero (and are thus indeed eigenvectors).
The same result can be shown for real Banach spaces in a similar fashion, but we only consider the complex case to keep the notation more coherent.

\begin{lemma} 
    \label{FBLDisjoinedness}
    Let $E$ be a complex Banach space and $0\neq x_k\in E$ for $k=1,\dots,n$. Then $\modulus{\delta_E(x_1)} \land\dots\land \modulus{\delta_E(x_n)} \neq 0$.
\end{lemma}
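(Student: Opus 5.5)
We work in the function representation of Convention~\ref{conv:representation}. There $\FBL_\bbC[E]$ is a vector sublattice of $C_{ph}(E',\bbC)$ with the pointwise lattice operations, and $\delta_E(x_k)(z') = \langle z', x_k\rangle$. The modulus and the infimum of finitely many elements are then computed pointwise. Hence
\[
    \bigl(\modulus{\delta_E(x_1)}\land\dots\land\modulus{\delta_E(x_n)}\bigr)(z')
    =
    \min_{k=1,\dots,n}\modulus{\langle z',x_k\rangle}
\]
for all $z'\in E'$. The claim therefore reduces to finding a single functional $z'\in E'$ with $\langle z',x_k\rangle\neq 0$ for every $k$.

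To find such a $z'$, consider for each $k$ the set $N_k\coloneqq\{z'\in E' \mid \langle z',x_k\rangle = 0\}$. Since $x_k\neq 0$, the Hahn--Banach theorem gives some $z'$ with $\langle z',x_k\rangle\neq 0$. So $N_k$ is a proper closed subspace of $E'$, namely the kernel of the nonzero functional $z'\mapsto\langle z',x_k\rangle$.

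A vector space over $\bbC$ (or over any infinite field) is never a finite union of proper subspaces. We can also argue topologically: each $N_k$ is closed with empty interior, so Baire's theorem applied in the Banach space $E'$ shows that $\bigcup_k N_k\neq E'$. Any $z'\notin\bigcup_k N_k$ then does the job. For a very elementary route one can instead pick $z_k'\notin N_k$ and take a generic linear combination $\sum_k t_k z_k'$. The map $t\mapsto\langle\sum_j t_j z_j',x_k\rangle$ is a nonzero linear form on $\bbC^n$ for each $k$, and one chooses $t$ off the union of their kernels.

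The only point needing care is the justification that the lattice operations of $\FBL_\bbC[E]$ are the pointwise ones inside $C_{ph}(E',\bbC)$. This follows from the construction recalled before Convention~\ref{conv:representation}: $\FBL_\bbC[E]=\FBL[E_\bbR]\oplus i\FBL[E_\bbR]$ is a sublattice of $C_{ph}$, and the composition with $\Psi$ is a lattice isomorphism. Once that is granted, the argument is just the avoidance step above.
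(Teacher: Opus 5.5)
Your proof is correct and follows essentially the same route as the paper: work in the pointwise representation of Convention~\ref{conv:representation}, and find a single functional $z'$ lying outside the finitely many annihilators of the $x_k$, at which $\modulus{\delta_E(x_k)}(z')>0$ for every $k$. The only difference is that the paper simply asserts that a finite union of proper closed subspaces cannot exhaust $E'$, whereas you supply a justification (Baire or a generic linear combination), which is a harmless refinement.
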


\begin{proof}
    For each $k \in \{1, \dots, n\}$ let $X'_k \subseteq X'$ denote the annihilator of $x_k$ in $X'$, i.e.\ 
    \begin{align*}
        X'_k 
        := 
        \big\{ 
            x' \in X' \mid \langle x', x_k \rangle = 0
        \big\}
        .
    \end{align*}
    Then each $X'_k$ is a closed vector subspace of $X'$ that is distinct from $X'$ itself since $x_k \not= 0$, so $\bigcup_{k=1}^n X'_k \not= X'$. 
    Hence, there exists a vector $x' \in X'$ such that $\langle x', x_k \rangle \not= 0$ for each $k$ and thus, $\modulus{\delta_E(x_k)}(x') > 0$
    for all $k$, which implies the claim.
\end{proof}

One can even show a quantitative version of the previous lemma, see Lemma~\ref{FBLDisjoinedness12} below. 
To this end we need the following auxiliary result for general Banach spaces.

\begin{lemma} \label{DisjoinednessfunctionalExists}
    Let $E$ be a Banach space and $0\neq x_1,\dots,x_n\in E$ such that $\norm{x_1} = \dots = \norm{x_n} = 1$. Then there is a functional $x'\in E'$ such that $\norm{x'} \leq 2$ and $\modulus{\langle x',x_k\rangle} \geq \frac{1}{3^{n-1}}$ for all $k=1,\dots,n$.
\end{lemma}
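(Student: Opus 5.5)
Induction on $n$ gives the bound directly. For each $k$, Hahn--Banach provides a norming functional $y'_k\in E'$ with $\norm{y'_k}=1$ and $\langle y'_k,x_k\rangle = 1$. The idea is to build $x'$ as a combination $\sum_k c_k y'_k$ with rapidly decreasing coefficients, so that the later terms cannot cancel what the earlier ones have achieved.

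The induction hypothesis I would carry is slightly stronger than the statement: there is $x'_n$ with
\[
\norm{x'_n}\le \sum_{j=0}^{n-1}3^{-j}<\tfrac32\le 2
\quad\text{and}\quad
\modulus{\langle x'_n,x_k\rangle}\ge 3^{-(n-1)}\ \text{for all } k\le n.
\]
For $n=1$ take $x'_1=y'_1$. For the step, let $x'_n$ be given for $x_1,\dots,x_n$ and set
\[
x'_{n+1}=x'_n+\alpha\, 3^{-n} y'_{n+1},
\]
where $\alpha\in\bbC$ has $\modulus{\alpha}=1$ and is still to be chosen. The norm bound is immediate from the triangle inequality.

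For the old vectors $k\le n$, we get $\modulus{\langle x'_{n+1},x_k\rangle}\ge 3^{-(n-1)}-3^{-n}=2\cdot 3^{-n}\ge 3^{-n}$, whatever $\alpha$ is.

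For the new vector $x_{n+1}$, we have $\langle x'_{n+1},x_{n+1}\rangle=\langle x'_n,x_{n+1}\rangle+\alpha 3^{-n}$. Choose the phase $\alpha$ so that $\alpha 3^{-n}$ points in the same direction as $\langle x'_n,x_{n+1}\rangle$, taking $\alpha=1$ if that number is $0$. Then the modulus is at least $3^{-n}$. This closes the induction and yields $\norm{x'}\le 2$ together with $\modulus{\langle x',x_k\rangle}\ge 3^{-(n-1)}$ for all $k$.

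In the real case the same works with $\alpha=\pm1$. The only delicate point is the bookkeeping: the perturbation added at step $n+1$ must be small relative to the lower bound from step $n$, yet large enough to produce a new lower bound. The factor $3$ leaves exactly the room needed for this, since $3^{-(n-1)}-3^{-n}\ge 3^{-n}$.
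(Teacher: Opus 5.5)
Your proof is correct and follows essentially the same route as the paper: induction with a strengthened norm bound, where each step adds a geometrically small multiple of a Hahn--Banach norming functional for the new vector. The only difference is minor: the paper first checks whether $\modulus{\langle x',x_{n+1}\rangle}\ge 3^{-n}$ already holds and otherwise adds $\frac{2}{3^n}z'$, using the reverse triangle inequality; you instead always add $3^{-n}$ times a phase-aligned norming functional, which avoids the case split and even gives $\norm{x'}<\tfrac32$.
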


\begin{proof}
    We prove the following slightly stronger result via induction over $n$: 
    there exists a functional $x' \in E'$ such that $\norm{x'} \leq 1+\sum_{k=1}^{n-1}\frac2{3^k}$ and $\modulus{\langle x',x_k\rangle} \geq \frac{1}{3^{n-1}}$ for all $k=1,\dots,n$. 
    
    For $n=1$ this is a consequence of the Hahn--Banach extension theorem. 
    Now assume that we have proved the claim for some $n\in \bbN$ and 
    let $x_{n+1}\in E$ with $\norm{x_{n+1}} = 1$.
    We need to show that there is a $y'\in E'$ such that $\norm{y'} \leq 1+\sum_{k=1}^n\frac2{3^k}$ and  $\modulus{\langle y',x_k\rangle} \geq \frac{1}{3^n}$ for all $k=1,\dots,n+1$. 
    
    If $\modulus{\langle x',x_{n+1}\rangle} \geq \frac{1}{3^n}$ then $x'$ meets those two requirements. 
    So assume that $\modulus{\langle x',x_{n+1}\rangle} < \frac{1}{3^n}$. 
    By the Hahn--Banach extension theorem there is a functional $z'\in E'$ such that $\modulus{\langle z', x_{n+1}\rangle} = 1$ and $\norm{z'} = 1$. 
    We define $y' = x' + \frac2{3^n}z'$. 
    This yields the inequalities
        \[ \modulus{\langle y',x_k\rangle} \geq \frac{1}{3^{n-1}} - \frac2{3^n}\modulus{\langle z',x_k\rangle} \geq \frac1{3^n} \]
    for all $k=1,\dots,n$ and
        \[ \modulus{\langle y',x_{n+1}\rangle} \geq \frac{2}{3^n} - \modulus{\langle x',x_{n+1}\rangle} \geq \frac{1}{3^n}.\]
    Moreover, $\norm{y'} \leq \norm{x'} + \frac{2}{3^n}\norm{z'} \leq 1 + \sum_{k=1}^{n}\frac{2}{3^k}$.
\end{proof}

\begin{lemma} \label{FBLDisjoinedness12}
    Let $E$ be a complex Banach space and $0\neq x_1,\dots,x_n\in E$ such that $\norm{x_1} = \dots = \norm{x_n} = 1$. Then $\norm{\modulus{\delta_E(x_1)}\land\dots\land\modulus{\delta_E(x_n)}} \geq \frac1{2\cdot3^{n-1}}$.
\end{lemma}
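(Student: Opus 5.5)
The plan is to combine Lemma~\ref{DisjoinednessfunctionalExists} with a norm estimate for the free Banach lattice norm against a single evaluation functional. By Lemma~\ref{DisjoinednessfunctionalExists} there is $x' \in E'$ with $\norm{x'} \le 2$ and $\modulus{\langle x', x_k\rangle} \ge 3^{-(n-1)}$ for all $k$. Put $y' := x'/\norm{x'}$. Then $\norm{y'} \le 1$ and
\[
    \modulus{\langle y', x_k\rangle} \ge \tfrac{1}{2\cdot 3^{n-1}}
\]
for every $k$.

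Next, consider the point evaluation at $y'$. By Convention~\ref{conv:representation}, elements of $\FBL_\bbC[E]$ are functions on $E'$ and $\delta_E(x_k)(y') = \langle y', x_k\rangle$. The key input is that for $\norm{y'}\le 1$ the map $\varphi_{y'}\colon f \mapsto f(y')$ is a lattice homomorphism $\FBL_\bbC[E]\to\bbC$ of norm at most $1$. To see this, apply the universal property to the bounded $\bbC$-linear operator $y'\colon E \to \bbC$, where $\bbC$ is viewed as a one-dimensional complex Banach lattice. This yields a lattice homomorphism $\widehat{y'}$ with $\widehat{y'}\circ\delta_E = y'$ and $\norm{\widehat{y'}} = \norm{y'} \le 1$. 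Evaluation at $y'$ is also a lattice homomorphism, since the lattice operations in $C_{ph}(E',\bbC)$ are pointwise, and it agrees with $\widehat{y'}$ on $\delta_E(E)$. Uniqueness in the universal property then forces $\widehat{y'} = \varphi_{y'}$, provided evaluation is continuous on $\FBL_\bbC[E]$. This continuity follows from the embedding in Convention~\ref{conv:representation} being continuous into $C_{ph}(E',\bbC)$, with its topology of uniform convergence on bounded sets.

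Finally, since $\varphi_{y'}$ is a lattice homomorphism, the infimum is evaluated pointwise:
\[
    \varphi_{y'}\big(\modulus{\delta_E(x_1)}\land\dots\land\modulus{\delta_E(x_n)}\big)
    = \min_k \modulus{\langle y', x_k\rangle}
    \ge \tfrac{1}{2\cdot 3^{n-1}}.
\]
Together with $\norm{\varphi_{y'}}\le 1$ this gives
\[
    \norm{\modulus{\delta_E(x_1)}\land\dots\land\modulus{\delta_E(x_n)}} \ge \tfrac{1}{2\cdot 3^{n-1}},
\]
which is the claim.

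The main obstacle is justifying the contractivity of point evaluations, i.e.\ the identification $\widehat{y'} = \varphi_{y'}$. If the continuity argument above is awkward, I would fall back on the explicit FBL norm from \cite[Definition~3.1]{deHeviaTradacete2023}, namely a supremum of $\sum_j \modulus{f(x_j')}$ over families with $\sup_{\norm{z}\le1}\sum_j\modulus{\langle x_j',z\rangle}\le 1$. Taking the single-element family $\{y'\}$ gives $\norm{f}\ge \modulus{f(y')}$ directly. One then only has to check that this survives the reparametrisation $\Psi$, which is isometric.
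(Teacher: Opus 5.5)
Your argument is correct and is essentially the paper's proof: Lemma~\ref{DisjoinednessfunctionalExists} supplies $x'$, and the free Banach lattice norm dominates $\modulus{f(y')}$ for every $y'$ in the unit ball of $E'$. The paper evaluates at $\tfrac12 x'$ instead of $x'/\norm{x'}$ and simply uses $\norm{f}\ge\norm{f}_\infty$ without further justification. Your justification of that bound is valid, and the continuity concern can be dropped, because evaluation is a positive operator and is therefore automatically bounded.
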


\begin{proof}
    We denote $f\coloneqq \modulus{\delta_E(x_1)}\land\dots\land\modulus{\delta_E(x_n)}\in \FBL_\bbC[E]$ to make the notation more digestible. Lemma~\ref{DisjoinednessfunctionalExists} yields a functional $x'\in E'$ such that $\norm{x'} \leq 2$ and $\modulus{\langle x',x_k\rangle} \geq \frac{1}{3^{n-1}}$ for all $k = 1,\dots,n$. This implies
        \[ \norm{f} \geq \norm{f}_\infty \geq f\left(\tfrac12x'\right) \geq \frac{1}{2\cdot3^{n-1}},\]
        where the $\infty$-norm of $f$ is computed over the unit ball of $E'$.
\end{proof}

To motivate the subsequent Theorem~\ref{thm:pnt-spec-sun-like}, 
let us briefly discuss the following question. 
Consider a bounded linear operator $T\colon E \to E$ on a complex Banach space $E$. 
One might ask whether there can be a spectral value $\lambda$ of $\overline{T}$ such that $\modulus{\mu} \neq \modulus{\lambda}$ for all $\mu\in\spec(T)$. 
The following example shows that this can indeed happen.

\begin{example} \label{exa:counterexample-spectrum2}
    Consider the operator $T:\bbC^2\to \bbC^2$ given by the matrix $T=\tmatrix{1&0\\0&2}$. Clearly $\spec(T) = \{1,2\}$. 
    Let $\gamma\in[0,1]$. 
    We show that $2^{1-\gamma}$ is an eigenvalue of $\overline{T}$. 
    Indeed, let $f\in C_{ph}((\bbC^2)',\bbC)$ be given by
        \[ f(z') = \modulus{z_1'}^\gamma\modulus{z_2'}^{1-\gamma} \]
        for all $z' \in \bbC^2 \simeq (\bbC^2)'$.
    Using $\overline{T}f = f\circ T'$ (see \cite[Lemma 3.1 on p.\,27]{OikhbergTaylorTradaceteTroitsky2022}) we calculate
        \[ (\overline{T}f)(z') = f(T'z') = \modulus{z_1'}^\gamma\modulus{2z_2'}^{1-\gamma} = 2^{1-\gamma}f(z'). \]
    Thus, $[1,2]\subseteq \pntSpec(\overline{T})$.
\end{example}

Again we can use the functional calculus to adapt Example~\ref{exa:counterexample-spectrum2} to a more general setting. 
We will do this in the proof of the following theorem.

\begin{theorem}
    \label{thm:pnt-spec-sun-like}
    Let $E$ be a complex Banach space and $T:E\to E$ a bounded linear operator that has a non-zero eigenvalue. 
    Let
    \begin{align*}
        U 
        \coloneqq 
        \left\{\prod_{k=1}^n\frac{\lambda_k}{\modulus{\lambda_k}}
        \middle|\,
        n \in \bbN_0, \; \lambda_1, \dots, \lambda_n\in\pntSpec(T)\setminus\{0\} \right\}
    \end{align*}
    be the subgroup of $\bbT$ generated by the phases of all non-zero eigenvalues of $T$. Then $ru\in \pntSpec(\overline{T})$ for all $u\in U$ and all $r \in (\pntSpri(T),\pntSpr(T))$.
\end{theorem}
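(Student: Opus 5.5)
Given $u\in U$ and $r\in(\pntSpri(T),\pntSpr(T))$, the plan is to build an explicit eigenvector of $\overline{T}$ with the functional calculus of Section~\ref{sec:functional-calculus}, generalizing Example~\ref{exa:counterexample-spectrum2}.

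\emph{Choice of eigenvalues.} Write $u=\prod_{k=1}^n \lambda_k/\modulus{\lambda_k}$ with non-zero eigenvalues $\lambda_k$ of $T$; inverses of phases are allowed because $\overline{\lambda}/\modulus{\lambda}=(\lambda/\modulus{\lambda})^{-1}$. We can realise this via conjugation in the calculus, i.e.\ use $\overline{z_k}$, since complex conjugation of a coordinate lies in $C_{ph}$. By the definition of $\pntSpri$ and $\pntSpr$ we also pick eigenvalues $\alpha,\beta$ with $0<\modulus{\alpha}<r<\modulus{\beta}$, and write $r=\modulus{\alpha}^{\gamma}\modulus{\beta}^{1-\gamma}$ with $\gamma\in(0,1)$. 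Let $x_1,\dots,x_n,a,b\in E$ be corresponding eigenvectors and set $v_k:=\delta_E(x_k)$, $v_a:=\delta_E(a)$, $v_b:=\delta_E(b)$. Then $\overline{T}\delta_E=\delta_E T$ gives $\overline{T}v_k=\lambda_k v_k$, and similarly for $v_a$ and $v_b$.

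\emph{The test function.} Define $f\in C_{ph}(\bbC^{n+2},\bbC)$ by
\[
  f(z_1,\dots,z_n,w_a,w_b) \;=\; \modulus{w_a}^{\gamma}\,\modulus{w_b}^{1-\gamma}\prod_{k=1}^n \frac{z_k}{\modulus{z_k}}
\]
whenever all coordinates are non-zero, and $f:=0$ otherwise. This $f$ is not positively homogeneous of degree one as written, because the phase factors are homogeneous of degree $0$; I include the modulus factor precisely to fix the degree. Continuity at points with some zero coordinate is the delicate point, since the phase factors are discontinuous there. To handle it, I would multiply by a degree-$0$ cutoff, or better replace $\modulus{w_a}^\gamma\modulus{w_b}^{1-\gamma}$ by
\[
  \big(\modulus{w_a}\wedge\modulus{z_1}\wedge\dots\wedge\modulus{z_n}\big)^{\gamma}\,\modulus{w_b}^{1-\gamma}\cdot h,
\]
with exponents adjusted so that total homogeneity is $1$ and the function tends to $0$ wherever some $z_k\to0$. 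A cleaner alternative is to build the phase by iterated calculus: the map $(z,w)\mapsto \frac{z}{\modulus{z}}\modulus{z}\wedge\modulus{w}$, extended by $0$, is continuous and positively homogeneous, and preserves eigen-relations with eigenvalue $\frac{\lambda}{\modulus{\lambda}}\cdot(\modulus{\lambda}\wedge\cdot)$. However, the minimum breaks the exact scaling unless the moduli match, so the scaling must be arranged by powers rather than minima.

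I therefore settle on the following. First produce, for each $k$, the vector $g_k:=\frac{v_k}{\modulus{v_k}}\modulus{v_k}^{0}$ in the form $f_k(v_k,v_a,v_b)$ with
\[
  f_k(z,w_a,w_b)=\frac{z}{\modulus{z}}\,\big(\modulus{z}^{\epsilon}\modulus{w_a}^{s}\modulus{w_b}^{t}\big),
\]
where $\epsilon>0$ is small and $s+t=1-\epsilon$. Then take the product structure in a single function,
\[
  F(z,w_a,w_b)=\prod_k\frac{z_k}{\modulus{z_k}}\,\prod_k\modulus{z_k}^{\epsilon_k}\,\modulus{w_a}^{s}\,\modulus{w_b}^{t},
\]
with $\sum_k\epsilon_k+s+t=1$ and all exponents positive, set to $0$ when a coordinate vanishes; positivity of the exponents makes $F$ continuous. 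The diagonal matrix $D=\operatorname{diag}(\lambda_1,\dots,\lambda_n,\alpha,\beta)$ satisfies $F\circ D=\mu F$ with
\[
  \mu=u\,\prod_k\modulus{\lambda_k}^{\epsilon_k}\modulus{\alpha}^{s}\modulus{\beta}^{t}.
\]
Since $\epsilon_k$ can be taken arbitrarily small and $s,t$ adjusted, and $r$ lies strictly inside $(\modulus{\alpha},\modulus{\beta})$, we can solve for $\modulus{\mu}=r$; this is why the interval is open.

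\emph{Conclusion.} By Theorem~\ref{thm:funccalc-properties}\ref{thm:funccalc-properties:itm:LatHom} applied to the lattice homomorphism $\overline{T}$, with $(\overline{T}v)=D_{\FBL}v$, we get $\overline{T}F(v)=F(Dv)=(F\circ D)(v)=\mu F(v)$. For non-vanishing, $F(z)\neq0$ whenever all $\modulus{z_j}\wedge\dots\neq0$, so Lemma~\ref{lem:func-cal-nonzero} together with Lemma~\ref{FBLDisjoinedness} yields $F(v)\neq0$. The main obstacle is arranging continuity and positive homogeneity of $F$ while keeping the exact modulus $r$; the positive-exponent device above is what I expect to resolve it.
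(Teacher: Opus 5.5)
Your final construction is correct and is essentially the paper's proof. The paper uses the same function $\bigl(\prod_{k=1}^n \tfrac{z_k}{\modulus{z_k}}\bigr)\bigl(\prod_{k=1}^{n+2}\modulus{z_k}^{e_k}\bigr)^{1/m}$ with strictly positive exponents on every coordinate, which is what secures continuity (your first $f$ was already homogeneous of degree one; its only defect was discontinuity), and it concludes via Theorem~\ref{thm:funccalc-properties}\ref{thm:funccalc-properties:itm:LatHom} and Lemma~\ref{FBLDisjoinedness}, with your non-vanishing step via Lemma~\ref{lem:func-cal-nonzero} equivalent to its bound $\modulus{f(x)} \ge \modulus{x_1}\wedge\dots\wedge\modulus{x_{n+2}}$ and your conjugation remark for inverse phases a small extra the paper does not spell out.
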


\begin{proof}
    Let $u = \prod_{k=1}^n\frac{\lambda_k}{\modulus{\lambda_k}}\in U$ 
    and $\pntSpri(T) < r < \pntSpr(T)$. Thus there are $\lambda_{n+1},\lambda_{n+2}\in\pntSpec(T)$ such that $\modulus{\lambda_{n+1}}< r<\modulus{\lambda_{n+2}}$. We denote the corresponding eigenvectors by $v_k$ and write $x_k\coloneqq \delta_E(v_k)\in \FBL_\bbC[E]$. We write $x\coloneqq (x_1,\dots,x_{n+2}) \in E^{n+2}$ and $D_\lambda\in\bbC^{(n+2)\times (n+2)}$ for the diagonal matrix with the entries $\lambda_1,\dots,\lambda_{n+2}$. 
    Clearly $\overline T x = D_\lambda x$. 
    There are real numbers $e_1,\dots,e_{n+2}\geq 1$ such that $r = \left(\prod_{k=1}^{n+2}\modulus{\lambda_{k}}^{e_k}\right)^{\frac1{m}}$ where $m\coloneqq \sum_{k=1}^{n+2}e_k$. We define $f\in C_{ph}(\bbC^{n+2},\bbC)$ by
    \begin{equation*} \label{eq:usefulFunktion}  
        f(z) 
        = 
        \begin{cases}
            \left( \prod_{k=1}^n\tfrac{z_k}{\modulus{z_k}} \right) \left( \prod_{k=1}^{n+2}\modulus{z_{k}}^{e_k} \right)^{\frac1{m}} & \text{if }z_k \neq 0 \text{ for all }k \\
            0  & \text{else}.
        \end{cases} 
    \end{equation*} 
    Using Theorem~\ref{thm:funccalc-properties}\ref{thm:funccalc-properties:itm:LatHom} we get
    \[ 
        \overline{T}f(x) = f(\overline{T}x) = (f\circ D_\lambda) (x) = ru\,f(x). 
    \]
    From Lemma~\ref{FBLDisjoinedness} we know that $y\coloneqq \modulus{x_1} \land\dots\land \modulus{x_n} \neq 0$, so for $\hat{y}\coloneqq(y,\dots,y)\in E^{n+2}$ Theorem~\ref{thm:funccalc-properties}\ref{thm:funccalc-properties:itm:monotone} implies
    \[
        \modulus{f(x)} = g(\modulus{x}) \geq g(\hat y) = y \neq 0.
    \]
    where $g\in C_{ph}(\bbC^{n+2},\bbC)$ is defined by 
    \begin{equation*} \label{eq:usefulFunktion2}
        g(z) = \left(\prod_{k=1}^{n+2}\modulus{z_k}^{e_k}\right)^{\frac1{m}}. 
    \end{equation*}
    Therefore $ru$ is an eigenvalue of $\overline T$ with the eigenvector $f(x)$.
\end{proof}

\begin{example}
    \label{exa:matrix-sun-like}
    Consider the matrix 
    \begin{align}
        \label{eq:exa:matrix-sun-like:matrix}
        T 
        =
        \begin{pmatrix}
            e^{i\frac{2\pi}{3}} & 0                 \\
            0              & 3 e^{i\frac{2\pi}{4}}
        \end{pmatrix}
    \end{align}
    acting on $\bbC^2$. 
    The point spectrum $\pntSpec(T)$ and the part of $\pntSpec(\overline{T})$ that one obtains from Theorem~\ref{thm:pnt-spec-sun-like} are shown in Figure~\ref{fig:sun}. 
    However, Theorem~\ref{thm:dounut-from-eigenvalues}\ref{thm:dounut-from-eigenvalues:itm:eigen} below will show that $\spec(\overline{T})$ is actually much larger than the sun-like shape in Figure~\ref{fig:sun}.
    \begin{figure}[h!]
      \begin{center}
        \begin{tikzpicture}
          \draw [black, ->] (-2,0) to (2,0) node[anchor=west] {$\bbR$}; % real axis
          \draw [black, ->] (0,-2) to (0,2) node[anchor=south] {$\iu \bbR$}; % imaginary axis
          
          \draw [thick] (0.25,0.433) to (0.75,1.3);
          \draw [thick] (-0.25,0.433) to (-0.75,1.3);
          \draw [thick] (0.25,-0.433) to (0.75,-1.3);
          \draw [thick] (-0.25,-0.433) to (-0.75,-1.3);
          \draw [thick] (0.433,0.25) to (1.3,0.75);
          \draw [thick] (0.433,-0.25) to (1.3,-0.75);
          \draw [thick] (-0.433,0.25) to (-1.3,0.75);
          \draw [thick] (-0.433,-0.25) to (-1.3,-0.75);
          \draw [thick] (0,0.5) to (0,1.5);
          \draw [thick] (0,-0.5) to (0,-1.5);
          \draw [thick] (0.5,0) to (1.5,0);
          \draw [thick] (-0.5,0) to (-1.5,0);
          
          \draw [fill] (-0.25,0.433) circle [radius=0.05];
          \draw [fill] (0,1.5) circle [radius=0.05];
        \end{tikzpicture}
        \caption{
            \label{fig:sun}
            The point spectrum $\pntSpec(T) = \{e^{i\frac{2\pi}{4}}, 3 e^{i\frac{2\pi}{4}}\}$ of the matrix $T$ defined in formula~\eqref{eq:exa:matrix-sun-like:matrix} in Example~\ref{exa:matrix-sun-like} is marked as two bold dots. 
            The part of $\pntSpec(\overline{T})$ known from Theorem~\ref{thm:pnt-spec-sun-like} consists of sun-shaped lines.
        }
      \end{center}
    \end{figure}
\end{example}

\begin{example} \label{exa:eigenvectors-vanish}
    A question that arises from Theorem~\ref{thm:pnt-spec-sun-like} is if the same technic could be applied to arbitrary lattice homomorphisms to find eigenvalues. However, an important ingredient in the proof of Theorem~\ref{thm:pnt-spec-sun-like} is Lemma~\ref{FBLDisjoinedness}. The non-disjointness of the eigenvectors of $\overline{T}$ ensures that the eigenvectors created using the functional calculus do not vanish. On arbitrary Banach lattices this is not guaranteed.

    Indeed, let $T$ be a lattice homomorphism on $\bbR^n$ with eigenvalues $\lambda_1$ and $\lambda_2$ such that $0<\lambda_1<\lambda_2$ and corresponding eigenvectors $x_1$ and $x_2$. For $\gamma\in[0,1]$ let $f_\gamma\in C_{ph}(\bbC^2,\bbC)$ be defined by 
        \[ f_\gamma(z) = \modulus{z_1}^\gamma\modulus{z_2}^{1-\gamma}. \]
    The vector $f_\gamma(x)$ vanishes if and only if $x_1$ and $x_2$ are disjoint. If $x_1$ and $x_2$ are disjoint, $f_\gamma(x)=0$ follows from the construction of the functional calculus, and if $f_\gamma(x) = 0$ then $x_1$ and $x_2$ must be disjoint by the same argument as in the proof of Theorem~\ref{thm:pnt-spec-sun-like}. 
    As $T$ cannot have infinitely many eigenvalues, it follows that $f_\gamma(x) = 0$ for almost but at most finitely many $\gamma$. 
    But this implies that $x_1$ and $x_2$ are disjoint and therefore $f_\gamma(x) = 0$ even for all $\gamma$.

    A very simple example where a lattice homomorphism has eigenvectors that are not disjoint is the left shift operator $L$ on $\ell^p$ for $p \in [1,\infty]$. 
    Since $\pntSpec(L) = \bbD$ we can choose two eigenvalues $0<\lambda_1<\lambda_2$ with eigenvectors $(x^1_n),(x^2_n)\in\ell^p$ where $x^k_n = \lambda_k^n$ for $k=1,2$. Similar to Theorem~\ref{thm:pnt-spec-sun-like} applying the functional calculus yields that $f(x)$ is an eigenvector for the eigenvalue $\lambda_1^\gamma\lambda_2^{1-\gamma}$.
\end{example}

In Theorem~\ref{thm:pnt-spec-sun-like} the spectral value $0$ does not yield the same information about the spectrum of $\overline{T}$ as the other spectral values of $T$. From the following example we can see that the assertion of Theorem~\ref{thm:pnt-spec-sun-like} does not hold for $0\in \pntSpec(T)$.

\begin{example} \label{exa:projection}
    We choose $T:\bbC^2\to \bbC^2$ with $T=\tmatrix{1 &0\\0&0}$. Clearly $\spec(T) = \pntSpec(T) = \{1,0\}$. 
    A look at Theorem~\ref{thm:pnt-spec-sun-like} might suggest that $(0,1)\subseteq\pntSpec(\overline{T})$, but this is not actually the case:
    as $T$ is a projection, so is $\overline{T}$, and thus $\spec(\overline{T}) = \{0,1\}$.
\end{example}

Finally we can combine Theorem~\ref{thm:spec-ring} and Theorem~\ref{thm:pnt-spec-sun-like} to obtain even more eigenvalues of $\overline{T}$.

\begin{corollary} \label{cor:spec-donut}
    Let $E$ be a complex Banach space and $T:E\to E$ a bounded linear operator with a non semi-simple eigenvalue $0\neq\lambda_0\in \bbC$. Then $r\bbT\subseteq \pntSpec(\overline{T})$ for all $\pntSpri(T) < r < \pntSpr(T)$.
\end{corollary}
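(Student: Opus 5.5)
Fix $r \in (\pntSpri(T),\pntSpr(T))$. The plan is to produce, for every $\varphi\in\bbR$, an eigenvector of $\overline{T}$ for the eigenvalue $re^{\iu\varphi}$. The construction uses the functional calculus on a tuple of vectors in $\FBL_\bbC[E]$, and merges two earlier ingredients: the Jordan-block trick from the proof of Theorem~\ref{thm:spec-ring}, which rotates phases, and the geometric-mean trick from Theorem~\ref{thm:pnt-spec-sun-like}, which adjusts moduli. It is cleaner to do this directly rather than apply Theorem~\ref{thm:spec-ring} to eigenvalues of $\overline{T}$ coming out of Theorem~\ref{thm:pnt-spec-sun-like}, since it is unclear whether those are non-semi-simple.

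First, since $\lambda_0$ is not semi-simple, choose $w\in\ker(\lambda_0-T)^2\setminus\ker(\lambda_0-T)$ and set $v\coloneqq(T-\lambda_0)w\neq 0$. Then $Tw=\lambda_0 w+v$ and $Tv=\lambda_0 v$. Next choose eigenvalues $\lambda_1,\lambda_2\in\pntSpec(T)$ with $\modulus{\lambda_1}<r<\modulus{\lambda_2}$ and eigenvectors $v_1,v_2$. These exist because $r$ lies strictly between $\pntSpri(T)$ and $\pntSpr(T)$. Note that $\modulus{\lambda_0}$ itself could serve as one of the endpoints. Put $x=(\delta_E w,\delta_E v,\delta_E v_1,\delta_E v_2)\in\FBL_\bbC[E]^4$. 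Then $\overline{T}x=Ax$, where $A$ is the block-diagonal matrix made of $\tmatrix{\lambda_0&1\\0&\lambda_0}$ acting on the first two coordinates and $\operatorname{diag}(\lambda_1,\lambda_2)$ acting on the last two.

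Choose exponents $e_0,e_1,e_2>0$ with $e_0+e_1+e_2=1$ and $\modulus{\lambda_0}^{e_0}\modulus{\lambda_1}^{e_1}\modulus{\lambda_2}^{e_2}=r$. This is possible because $\log r$ lies strictly between $\log\modulus{\lambda_1}$ and $\log\modulus{\lambda_2}$, so a strictly positive convex combination of $\log\modulus{\lambda_0},\log\modulus{\lambda_1},\log\modulus{\lambda_2}$ can hit it. Then define $f\in C_{ph}(\bbC^4,\bbC)$ by
\[
f(z)=\frac{z_2}{\modulus{z_2}}\exp\Big(\iu\varphi'\re\frac{\lambda_0 z_1}{z_2}\Big)\,\modulus{z_2}^{e_0}\modulus{z_3}^{e_1}\modulus{z_4}^{e_2}
\]
if $z_2z_3z_4\neq0$, and $f(z)=0$ otherwise. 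Here $\varphi'$ is chosen so that the total phase gained is $e^{\iu\varphi}$: the factor $z_2/\modulus{z_2}$ contributes $\lambda_0/\modulus{\lambda_0}$, and the exponential contributes $e^{\iu\varphi'}$. Continuity at points with some $z_k=0$ follows because the modulus factor tends to $0$ while the phase factors stay bounded, and positive homogeneity is clear. A direct computation, exactly as in Theorem~\ref{thm:spec-ring}, gives $f\circ A=re^{\iu\varphi}f$. Theorem~\ref{thm:funccalc-properties}\ref{thm:funccalc-properties:itm:LatHom} then yields $\overline{T}f(x)=f(\overline{T}x)=(f\circ A)(x)=re^{\iu\varphi}f(x)$.

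The remaining, and main, point is that $f(x)\neq0$. We have $\modulus{f}(z)=\modulus{z_2}^{e_0}\modulus{z_3}^{e_1}\modulus{z_4}^{e_2}$, which is independent of $z_1$ and increasing in each coordinate. With $y\coloneqq\modulus{\delta_E v}\land\modulus{\delta_E v_1}\land\modulus{\delta_E v_2}$, which is nonzero by Lemma~\ref{FBLDisjoinedness}, Theorem~\ref{thm:funccalc-properties}\ref{thm:funccalc-properties:itm:modulus} and~\ref{thm:funccalc-properties:itm:monotone} give $\modulus{f(x)}=\modulus{f}(\modulus{x})\ge\modulus{f}(0,y,y,y)=y\neq0$, as in Theorem~\ref{thm:pnt-spec-sun-like}. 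The care needed here is to keep the singular set of $f$ confined to $\{z_2z_3z_4=0\}$, which never involves $z_1$, so that the monotonicity argument applies. The case where some of $v,v_1,v_2$ are linearly dependent is harmless, since Lemma~\ref{FBLDisjoinedness} only requires each vector to be nonzero.
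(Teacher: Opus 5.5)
Your proposal is correct and follows essentially the same construction as the paper: the same four-vector tuple (a $2\times 2$ Jordan block at $\lambda_0$ plus two eigenvalues straddling $r$), a geometric-mean modulus factor multiplied by the phase factor $\exp(\iu\varphi\re\frac{\lambda_0 z_1}{z_2})$, and non-vanishing via Lemma~\ref{FBLDisjoinedness} together with monotonicity of the functional calculus. The differences are cosmetic. Your extra factor $z_2/\modulus{z_2}$ is unnecessary, and your lower bound via $(0,y,y,y)$ rather than $(y,y,y,y)$ is in fact slightly cleaner, since $y \le \modulus{x_1}$ is not guaranteed. You should, however, state explicitly that $\lambda_1 \neq 0$ (possible since $\pntSpri(T)$ is defined via nonzero eigenvalues), because this is needed both for your logarithms and for $f\circ A = re^{\iu\varphi}f$.
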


\begin{proof}
    As we have seen in Theorem~\ref{thm:spec-ring} there are $v_1,v_2\in E\setminus\{0\}$ such that $Tv_1 = \lambda_0 v_1+v_2$ and $Tv_2 = \lambda_0v_2$. 
    Let $\varphi\in [0,2\pi)$ and $\pntSpri(T) < r < \pntSpr(T)$. Thus there are $\lambda_1,\lambda_2\in\pntSpec(T)$ such that $0<\modulus{\lambda_1}< r<\modulus{\lambda_2}$. We denote the corresponding eigenvectors by $v_3$ and $v_4$ and write $x_k\coloneqq \delta_E(v_k)\in \FBL_\bbC[E]$. 
    We write $x\coloneqq (x_1,\dots,x_4)$ and 
    \[ 
        D_\lambda 
        = 
        \tmatrix{ 
            \lambda_0&1&0&0\\
            0&\lambda_0&0&0\\
            0&0&\lambda_1&0\\
            0&0&0&\lambda_2
        }
        .
    \]
    Then $\overline T x = D_\lambda x$. There are $e_0,e_1,e_2\geq 1$ such that $r = \left(\modulus{\lambda_0}^{e_0}\modulus{\lambda_1}^{e_1}\modulus{\lambda_2}^{e_2}\right)^{\frac1m}$ where $m\coloneqq e_0+e_1+e_2$. We define $f\in C_{ph}(\bbC^4,\bbC)$ by
    \begin{equation*}
        f(z) = 
        \begin{cases} 
            \left(\modulus{z_2}^{e_0}\modulus{z_3}^{e_1}\modulus{z_4}^{e_2}\right)^{\frac1m} \exp(i\varphi\re\frac{\lambda_0 z_1}{z_2}), &\text{if }z_2 \neq 0 \\0, &\text{else}.
        \end{cases} 
    \end{equation*} 
    Then $f\circ D_\lambda = re^{\iu\varphi} f$. 
    Form Theorem~\ref{thm:funccalc-properties}\ref{thm:funccalc-properties:itm:LatHom} we get
    \[ 
        \overline{T}f(x) 
        = 
        f(\overline{T}x) = (f\circ D_\lambda) (x) = re^{\iu\varphi}\,f(x). 
    \]
    From Lemma~\ref{FBLDisjoinedness} we know that $y\coloneqq \modulus{x_2} \land \modulus{x_3} \land \modulus{x_4} \neq 0$ so for $\hat y\coloneqq (y,y,y,y)\in E^4$, Theorem~\ref{thm:funccalc-properties}\ref{thm:funccalc-properties:itm:modulus} and~\ref{thm:funccalc-properties:itm:monotone} implies
        \[ \modulus{f(x)} = g(\modulus{x}) \geq g(\hat y) = y \neq 0 \]
    where $g\in C_{ph}(\bbC^4,\bbC)$ is defined by $g(z) = \left(\modulus{z_2}^{e_0}\modulus{z_3}^{e_1}\modulus{z_4}^{e_2}\right)^{\frac1m}$.
    Therefore $re^{\iu\varphi}$ is an eigenvalue of $\overline T$ to the eigenvector $f(x)$.
\end{proof}

\begin{example}
    \label{exa:matrix-donut}
    Consider the matrix 
    \begin{align}
        \label{eq:exa:matrix-donut:matrix}
        T 
        =
        \begin{pmatrix}   
                    2&1&0&0\\
                    0&2&0&0\\
                    0&0&1&0\\
                    0&0&0&3 \end{pmatrix}
    \end{align}
    acting on $\bbC^4$. Corollary~\ref{cor:spec-donut} yields that $(1,3)\bbT\subseteq \pntSpec(\overline{T})$ and thus $\spec(\overline{T}) = [1,3]\bbT$.
    The point spectrum $\pntSpec(T)$ and the part of $\pntSpec(\overline{T})$ that one obtains from Theorem~\ref{thm:spec-ring}, Theorem~\ref{thm:pnt-spec-sun-like} and Corollary~\ref{cor:spec-donut} are shown in Figure~\ref{fig:donut}. 
    \begin{figure}[h!]
      \begin{center}
        \begin{tikzpicture}
          % spectrum of T^_
          \draw [white!50!gray,fill] (0,0) circle [radius=1.5];
          \draw [white,fill] (0,0) circle [radius=0.5];
          % real axis
          \draw [black, ->] (-2,0) to (2,0) node[anchor=west] {$\bbR$}; 
          % imaginary axis
          \draw [black, ->] (0,-2) to (0,2) node[anchor=south] {$\iu \bbR$}; 
          % theorem 5.1
          \draw [thick] (0.5,0) to (1.5,0);
          % spectrum of T
          \draw [fill] (0.5,0) circle [radius=0.05];
          \draw [fill] (1,0) circle [radius=0.05];
          \draw [fill] (1.5,0) circle [radius=0.05];
          % theorem 6.5
          \draw [thick] (0,0) circle [radius=1];
        \end{tikzpicture}
        \caption{
            \label{fig:donut}
            The point spectrum $\pntSpec(T) = \{1,2,3\}$ of the matrix $T$ defined in~\eqref{eq:exa:matrix-donut:matrix} is marked as three bold dots. 
            The part of $\pntSpec(\overline{T})$ known from Theorem~\ref{thm:spec-ring} and Theorem~\ref{thm:pnt-spec-sun-like} is emphasized as bold lines and $\pntSpec(\overline{T})$ is shown in gray.
        }
      \end{center}
    \end{figure}
\end{example}

The interval $(\pntSpri(T),\pntSpr(T))$ in the conclusion of Theorem~\ref{thm:pnt-spec-sun-like} does not contain its end points. 
This raises the question if a similar result can be obtained for eigenvalues of $T$ that are all located on the same circle. 
This is indeed possible, as the following theorem shows.

\begin{theorem}
    \label{thm:pnt-spec-circle-subgroup}
    Let $E$ be a complex Banach space and $T:E\to E$ a bounded linear operator. 
    Let $r > 0$ such that $T$ has an eigenvalue of modulus $r$ and let
    \begin{align*}
        U 
        \coloneqq 
        \left\{\prod_{k=1}^n\frac{\lambda_k}{\modulus{\lambda_k}}
        \middle|\,
        n \in \bbN_0, \; \lambda_1, \dots, \lambda_n  \in  \big(\pntSpec(T)\setminus\{0\} \big) \cap r\bbT \right\}
    \end{align*}
    be the subgroup of $\bbT$ generated by the phases of the eigenvalues of $T$ of modulus $r$. 
    Then $r u\in \pntSpec(\overline{T})$ for all $u\in U$.
\end{theorem}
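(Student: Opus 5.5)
The plan is to imitate the proof of Theorem~\ref{thm:pnt-spec-sun-like}, using only eigenvalues on the circle $r\bbT$, so that no interpolation of moduli is needed. Let $u = \prod_{k=1}^n \frac{\lambda_k}{\modulus{\lambda_k}} \in U$ with $\lambda_1,\dots,\lambda_n \in \pntSpec(T)\cap r\bbT$.

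If $n=0$, then $u=1$. By hypothesis there is some eigenvalue $\lambda_1$ of modulus $r$, and we simply use the product with $n=1$ together with a trivial factor; the construction below then covers this case as well. Concretely, we always add one extra eigenvalue $\lambda_{n+1}$ of modulus $r$ (any one will do), so that the list of vectors is nonempty.

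Choose eigenvectors $v_k$ for $\lambda_k$ and set $x_k \coloneqq \delta_E(v_k)$ and $x \coloneqq (x_1,\dots,x_{n+1})$. Let $D_\lambda$ be the diagonal matrix with entries $\lambda_1,\dots,\lambda_{n+1}$, so that $\overline{T}x = D_\lambda x$. Define $f \in C_{ph}(\bbC^{n+1},\bbC)$ by
\begin{align*}
    f(z) = \Big(\prod_{k=1}^n \frac{z_k}{\modulus{z_k}}\Big)\Big(\prod_{k=1}^{n+1}\modulus{z_k}\Big)^{\frac1{n+1}}
\end{align*}
if all $z_k \neq 0$, and $f(z) = 0$ otherwise.

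This $f$ is continuous, because the modulus of the phase factor is $1$ while the geometric mean tends to $0$ whenever some $z_k \to 0$. It is also positively homogeneous. Since every $\modulus{\lambda_k} = r$, the geometric-mean factor gets multiplied by exactly $r$ under $D_\lambda$. The phase factor gets multiplied by $\prod_{k=1}^n \lambda_k/\modulus{\lambda_k} = u$. Hence $f \circ D_\lambda = ru\, f$.

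By Theorem~\ref{thm:funccalc-properties}\ref{thm:funccalc-properties:itm:LatHom} we get $\overline{T} f(x) = f(\overline{T}x) = (f\circ D_\lambda)(x) = ru\, f(x)$.

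It remains to show that $f(x) \neq 0$, and this is the only real point to check. By Theorem~\ref{thm:funccalc-properties}\ref{thm:funccalc-properties:itm:modulus} and~\ref{thm:funccalc-properties:itm:assosiative}, $\modulus{f(x)} = g(\modulus{x})$, where $g(z) = \big(\prod_{k=1}^{n+1}\modulus{z_k}\big)^{1/(n+1)}$. Lemma~\ref{FBLDisjoinedness} gives $y \coloneqq \modulus{x_1}\land\dots\land\modulus{x_{n+1}} \neq 0$. Since $g$ is increasing on the positive cone, Theorem~\ref{thm:funccalc-properties}\ref{thm:funccalc-properties:itm:monotone} yields $g(\modulus{x}) \ge g(y,\dots,y) = y \neq 0$. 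Alternatively, one can apply Lemma~\ref{lem:func-cal-nonzero} directly.

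Therefore $f(x)$ is an eigenvector of $\overline{T}$ for the eigenvalue $ru$. I expect no serious obstacle. The main care goes into the continuity of $f$ at the coordinate hyperplanes and into the $n=0$ case, which is handled by the extra eigenvalue $\lambda_{n+1}$.
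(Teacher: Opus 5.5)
Your proof is correct and follows the paper's argument essentially verbatim. You use the same phase-times-geometric-mean function $f$, the identity $f\circ D_\lambda = ru\,f$ together with Theorem~\ref{thm:funccalc-properties}\ref{thm:funccalc-properties:itm:LatHom}, and Lemma~\ref{FBLDisjoinedness} to see that $f(x)\neq 0$. The only difference is cosmetic: you absorb the case $n=0$ by always adjoining an extra eigenvalue of modulus $r$ (for $n=0$ this gives $f(x)=\modulus{x_1}$), whereas the paper handles $n=0$ separately by noting that the lattice homomorphism $\overline{T}$ has an eigenvalue of modulus $r$.
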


\begin{proof}
    Let $u = \prod_{k=1}^n\frac{\lambda_k}{\modulus{\lambda_k}}\in U$. 
    We may assume that $n \ge 1$ 
    (since $n=0$ implies $u=1$, in which case $ru = r$ is an eigenvalue of $\overline{T}$ since this $\overline{T}$ is a lattice homomorphism and has an eigenvalue of modulus $r$ by assumption). 
    For each $k = 1, \dots, n$ let $v_k \in E$ be an eigenvector of $T$ for the eigenvalue $\lambda_k$ 
    and define $x_k\coloneqq \delta_E(v_k)\in \FBL_\bbC[E]$. 
    Write $D_{\lambda} \in \bbC^{n\times n}$ for the diagonal matrix with the $\lambda_k$ as diagonal entries. 
    Then $\overline{T}x = D_{\lambda}x$ for $x = (x_1, \dots, x_n) \in E^n$. 
    We define $f\in C_{ph}(\bbC^n,\bbC)$ by 
    \[ 
        f(z) =  
        \begin{cases}
            \left( \prod_{k=1}^n\tfrac{z_k}{\modulus{z_k}} \right)  \left( \prod_{k=1}^n \modulus{z_k} \right)^{\frac1{n}} & \text{if } z_k \neq 0 \text{ for all } k, \\
            0  & \text{else}.
        \end{cases} 
    \] 
    Then $f \circ D_\lambda = ru \, f$ and hence, 
    by Theorem~\ref{thm:funccalc-properties}\ref{thm:funccalc-properties:itm:LatHom}, 
    \[
        \overline{T} f(x) 
        = 
        f(\overline{T} x)
        = 
        (f \circ D_\lambda)(x)
        = 
        ru f(x)
        .
    \]
    Observe that, by Theorem~\ref{thm:funccalc-properties}\ref{thm:funccalc-properties:itm:positive}, $\modulus{f(x)} \ge \modulus{x_1} \land \dots \land \modulus{x_n}$, which is non-zero according to Lemma~\ref{FBLDisjoinedness}, so $ru$ is indeed an eigenvalue of $\overline{T}$.
\end{proof}

\section{The spectrum of $\overline{T}$ for essential roots of unity}
\label{sec:spec-overline-ess-root}

The tools developed so far enable us to characterize $\spec(\overline{T})$ if $T$ is essentially a root of unity (Theorem~\ref{thm:roots-of-unity-general}). 
First, we need the following two auxiliary results. 

\begin{lemma}
    \label{lem:power-not-in-spec}
    Let $E$ be a complex Banach lattice, let $S: E \to E$ be a lattice homomorphism, and consider a complex number $\lambda \not\in \spec(S)$. 
    For each integer $n_0 \ge 1$ there exists an $n \ge n_0$ such that $\lambda^n \not\in \spec(S^n)$.
\end{lemma}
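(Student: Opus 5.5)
The plan is to reduce the statement to a question about $\spec(S) \cap \modulus{\lambda}\bbT$. That set is closed, cyclic, and misses a neighbourhood of $\lambda$, which forces it to be a finite set of scaled roots of unity. Then $n$ is chosen to be a large prime.

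First, the case $\lambda = 0$ is trivial. Here $S$ is invertible, so $S^n$ is invertible and $0 = \lambda^n \notin \spec(S^n)$ for every $n$.

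So let $\lambda \neq 0$ and write $\lambda = \modulus{\lambda}\zeta$ with $\zeta \in \bbT$. By the spectral mapping theorem for polynomials, $\spec(S^n) = \{\mu^n \mid \mu \in \spec(S)\}$. Hence $\lambda^n \in \spec(S^n)$ if and only if some $\mu \in \spec(S)$ satisfies $\mu^n = \lambda^n$. Such a $\mu$ necessarily has modulus $\modulus{\lambda}$. It therefore suffices to study the compact set
\[
    G \coloneqq \{\eta \in \bbT \mid \modulus{\lambda}\eta \in \spec(S)\}.
\]
By Scheffold's theorem \cite[Theorem~V.4.4]{Schaefer1974}, $G$ is closed under integer powers, so it is a union of cyclic subgroups of $\bbT$.

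Next, since $\zeta \notin G$ and $G$ is closed, there is an open arc of length $\varepsilon > 0$ around $\zeta$ that is disjoint from $G$. Every $\eta \in G$ generates a subgroup $\langle \eta \rangle \subseteq G$. If $\eta$ is not a root of unity, this subgroup is dense in $\bbT$. If $\eta$ is a root of unity of order $q$, then $\langle\eta\rangle$ consists of the $q$-th roots of unity, which are $2\pi/q$-dense in $\bbT$. In both situations the subgroup must avoid the arc, which forces $\eta$ to be a root of unity of order $q \le 2\pi/\varepsilon$. Consequently there is an integer $N \ge 1$, for instance $N = \lfloor 2\pi/\varepsilon \rfloor !$, such that $\eta^N = 1$ for all $\eta \in G$. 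This step, extracting finiteness of $G$ from cyclicity together with closedness, is the one I regard as the heart of the argument, though it is elementary.

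Now choose a prime $p \ge n_0$ with $p > N$. If $\zeta$ is a root of unity, additionally require that $p$ does not divide the order of $\zeta$. Suppose, for contradiction, that $\lambda^p \in \spec(S^p)$. Then some $\eta \in G$ satisfies $\eta^p = \zeta^p$, and we split into two cases.
\begin{enumerate}[label=\upshape(\roman*)]
    \item If $\zeta$ is not a root of unity, then neither is $\zeta^p$. But $\eta^p$ is a root of unity, which is a contradiction.
    \item If $\zeta$ is a root of unity, then $\eta$ and $\zeta$ both have orders coprime to $p$. The map $w \mapsto w^p$ is injective on the group of roots of unity whose order is coprime to $p$. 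Hence $\eta = \zeta$, so $\zeta \in G$, i.e.\ $\lambda \in \spec(S)$, which is a contradiction.
\end{enumerate}
Thus $n \coloneqq p$ has the required property $\lambda^n \notin \spec(S^n)$ with $n \ge n_0$.
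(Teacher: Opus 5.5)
Your proof is correct and follows essentially the paper's route. The spectral mapping theorem reduces everything to spectral values on the circle of radius $\modulus{\lambda}$, cyclicity together with closedness of $\spec(S)$ rules out phases that are not roots of unity, and $n$ is chosen coprime to the order of $\lambda/\modulus{\lambda}$.

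The only difference is the final step. The paper applies cyclicity once more, with an exponent $nk \equiv 1$ modulo the order of $\lambda/\modulus{\lambda}$, to pass from a spectral value $\xi\lambda$ with $\xi^n = 1$ back to $\lambda$ itself. Your injectivity argument instead needs the uniform bound $N$ on the orders of elements of $G$ and the condition $p > N$; both are correct, but with the paper's last step they are not needed.
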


\begin{proof}
    We may assume that $\lambda \not= 0$.
    Set $\mu \coloneqq \frac{\lambda}{\modulus{\lambda}} \in \bbT$, i.e.\ $\lambda = \mu \modulus{\lambda}$. 
    Choose $n = n_0$ if $\mu$ is not a root of unity. 
    If $\mu$ is a root of unity of order $m \ge 1$, choose $n \ge n_0$ to be an integer that is co-prime to $m$. 

    Assume now for a contradiction that $\lambda^n \in \spec(S^n)$. 
    By the spectral mapping theorem for polynomials there exists an $n$th root of unity $\xi$ such that $\xi \lambda \in \spec(S)$. 
    Since $S$ is a lattice homomorphism, its spectrum is cyclic.
    If $\xi \mu$ is not a root of unity, then it follows that the whole circle of radius $\modulus{\lambda}$ is contained in $\spec(S)$, which contradicts $\lambda \not\in \spec(S)$. 
    So $\xi \mu$ is a root of unity, and hence so is $\mu$. 
    By the choice of $n$, the order $m$ of $\mu$ is coprime to $n$, so there exists an integer $k \ge 1$ such that $nk \equiv 1$ (mod $m$).
    Hence, 
    \begin{align*}
        \spec(S) 
        \ni 
        (\xi \mu)^{nk} \lambda 
        = 
        \lambda
        ,
    \end{align*}
    which is again a contradiction.
\end{proof}

\begin{lemma}
    \label{lem:zero-pole-shift}
    Let $E$ be a complex Banach space and let $T: E \to E$ be a bounded linear operator. 
    Let $0 \in \spec(T)$ be a pole of the resolvent, 
    with associated spectral projection $Q$ and set $P \coloneqq \id_E - Q$.
    If $\lambda_0 \in \bbC$ and $S \coloneqq PT + \lambda_0 Q$, 
    then $\spec(\overline{T}) \setminus \{0\} \subseteq \spec(\overline{S})$. 
\end{lemma}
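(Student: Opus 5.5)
The plan is to exploit an algebraic identity between powers of $T$ and $S$. We then transfer it to $\FBL_\bbC[E]$ through the functoriality $\overline{AB} = \overline{A}\,\overline{B}$. This functoriality follows from the uniqueness in the universal property, since $\overline{A}\,\overline{B}$ is a lattice homomorphism with $\overline{A}\,\overline{B}\delta_E = \delta_E AB$. Finally we use Lemma~\ref{lem:power-not-in-spec} to pass from powers back to $\lambda$ itself.

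\emph{Step 1 (identity in $E$).} Since $0$ is a pole of the resolvent, say of order $k$, the restriction of $T$ to the range of $Q$ is nilpotent, so $(TQ)^k = T^kQ = 0$. Moreover $Q$, and hence $P$, commutes with $T$. Therefore $SP = PT = PS$, and by induction $S^nP = PT^n$ for all $n$. For $n \ge k$ we get
\[
T^n = PT^n + QT^n = PT^n = S^n P .
\]

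\emph{Step 2 (lift to the free Banach lattice).} By functoriality, for all $n\ge k$ we have $\overline{T}^{\,n} = \overline{S}^{\,n}\,\overline{P}$. Since $P^2 = P$, the operator $\overline{P}$ is a projection. Since $SP = PS$, the operators $\overline{S}$ and $\overline{P}$ commute. Hence $\FBL_\bbC[E] = \operatorname{ran}\overline{P} \oplus \ker\overline{P}$ is a decomposition into $\overline{S}$-invariant subspaces. With respect to it, $\overline{T}^{\,n}$ acts as $\overline{S}^{\,n}$ on $\operatorname{ran}\overline{P}$ and as $0$ on $\ker\overline{P}$.

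\emph{Step 3 (conclusion).} Let $0\neq\lambda\notin\spec(\overline{S})$. Since $\overline{S}$ is a lattice homomorphism, Lemma~\ref{lem:power-not-in-spec} (applied with $n_0 = k$) yields some $n\ge k$ with $\lambda^n\notin\spec(\overline{S}^{\,n})$. The resolvent $(\lambda^n - \overline{S}^{\,n})^{-1}$ commutes with $\overline{P}$, so it restricts to an inverse of $\lambda^n - \overline{S}^{\,n}$ on $\operatorname{ran}\overline{P}$. On $\ker\overline{P}$ the operator $\lambda^n - \overline{T}^{\,n}$ is just $\lambda^n \neq 0$, which is invertible. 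Hence $\lambda^n\notin\spec(\overline{T}^{\,n})$. By the spectral mapping theorem for polynomials this gives $\lambda\notin\spec(\overline{T})$, which proves the contrapositive of the claim.

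The only genuinely delicate point is Step 3. There we cannot simply compare $\overline{T}$ and $\overline{S}$ directly, because $\overline{T}$ need not decompose along $\overline{P}$ in the same way. Passing to a suitable power is exactly what Lemma~\ref{lem:power-not-in-spec} was designed for, and the cyclicity of $\spec(\overline{S})$ is what makes such a power available.
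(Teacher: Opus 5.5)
Your proof is correct and follows the paper's argument essentially step for step: the identity $T^n = S^nP$ lifted to $\overline{T}^{\,n} = \overline{S}^{\,n}\,\overline{P}$, the choice of $n$ via Lemma~\ref{lem:power-not-in-spec}, the invertibility of $\lambda^n - \overline{T}^{\,n}$, and the spectral mapping theorem; the only difference is that you check invertibility blockwise along $\overline{P}$, where the paper writes down the inverse $\overline{P}(\lambda^n - \overline{S}^{\,n})^{-1} + \lambda^{-n}(\overline{\id_E} - \overline{P})$ explicitly. Your closing remark is inaccurate, however: since $TP = PT = SP$, one has $\overline{T}\,\overline{P} = \overline{P}\,\overline{T} = \overline{S}\,\overline{P}$, so $\overline{T}$ does decompose along $\overline{P}$ (it coincides with $\overline{S}$ on $\operatorname{ran}\overline{P}$ and is nilpotent on $\ker\overline{P}$), and this already gives $\spec(\overline{T})\setminus\{0\} \subseteq \spec(\overline{S})$ without passing to powers or using Lemma~\ref{lem:power-not-in-spec}.
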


\begin{proof}
    Let $\lambda \in \bbC \setminus \{0\}$. 
    We assume that $\lambda \not\in \spec(\overline{S})$ and we need to show that $\lambda \not\in \spec(\overline{T})$.

    Since $0$ is a pole of the resolvent, the restriction of $T$ to the range of $Q$ is nilpotent, i.e.\ there exists an integer $n \ge 1$ such that $T^n Q = 0$. 
    It follows from Lemma~\ref{lem:power-not-in-spec} that, by making $n$ larger if necessary, we can achieve that $\lambda^n \not\in \spec(\overline{S}^n)$.

    Observe that $T^n = T^nP = PT^n = S^nP = PS^n$, and hence the same equality holds for the  lifted operators $\overline{T}$, $\overline{S}$, $\overline{P}$. 
    Therefore,
    \begin{align*}
        (\lambda^n - \overline{T}^n) 
        \Big( 
            \overline{P}(\lambda^n-\overline{S}^n)^{-1} + \tfrac{1}{\lambda^n}(\overline{\id_E} - \overline{P})
        \Big) 
        & = 
        \overline{P} + (\overline{\id_E} - \overline{P}) - \frac{1}{\lambda^n}\overline{T}^n (\overline{\id_E} - \overline{P}) 
        \\ 
        & = 
        \overline{\id_E} - {\lambda^n} \overline{T}^n (\overline{\id_E} - \overline{P}) 
        = 
        \overline{\id_E}
    \end{align*}
    since $\overline{T}^n (\overline{\id_E} - \overline{P}) = \overline{T}^n - \overline{T}^n \overline{P} = 0$. 
    Since the factors on the left hand side of the above equation commute, we conclude that $\lambda^n - \overline{T}^n$ is invertible, so $\lambda^n \not\in\spec(\overline{T}^n)$. 
    The spectral mapping theorem for polynomials thus implies that $\lambda \not\in \spec(\overline{T})$.
\end{proof}

\begin{theorem} \label{thm:roots-of-unity-general}
    Let $E$ be a complex Banach space and let $T: E \to E$ be a bounded linear operator with $\spec(T) \not= \{0\}$. 
    \begin{enumerate}[label=\upshape(\alph*)]
        \item\label{thm:roots-of-unity-general:itm:id} 
        If $T$ is a root of unity, then $\spec(\overline{T})$ is the subgroup of $\bbT$ generated by $\spec(T)$.
        
        \item\label{thm:roots-of-unity-general:itm:proj} 
        If $T$ is a proper essential root of unity, then $\spec(\overline{T})$ consists of $0$ and of the subgroup of $\bbT$ generated by $\spec(T) \setminus \{0\}$.
    \end{enumerate}
\end{theorem}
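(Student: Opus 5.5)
Let $G$ denote the subgroup of $\bbT$ generated by $\spec(T)$ in part~\ref{thm:roots-of-unity-general:itm:id}, and by $\spec(T)\setminus\{0\}$ in part~\ref{thm:roots-of-unity-general:itm:proj}. By Lemma~\ref{lem:spectral-decomposition}, $G$ is a finite group of roots of unity, and every nonzero spectral value of $T$ is an eigenvalue of modulus $1$. The plan is to prove $G \subseteq \spec(\overline{T})$ and the reverse inclusion separately, and in part~\ref{thm:roots-of-unity-general:itm:proj} to add $0$.

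For the lower bound we apply Theorem~\ref{thm:pnt-spec-circle-subgroup} with $r=1$. This is possible because $\spec(T)\setminus\{0\} = \pntSpec(T)\setminus\{0\}\subseteq\bbT$ is nonempty, since $\spec(T)\neq\{0\}$. The theorem gives $G\subseteq\pntSpec(\overline{T})\subseteq\spec(\overline{T})$. In part~\ref{thm:roots-of-unity-general:itm:proj} we have $0\in\spec(T)$, so $0\in\spec(\overline{T})$ by \cite[Proposition~6.1(2)]{deHeviaTradacete2023}.

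For the upper bound in part~\ref{thm:roots-of-unity-general:itm:id}, let $n:=|G|$. Every element of $\spec(T)$ lies in $G$, so all spectral values are $n$-th roots of unity and are first order poles. Hence $T$ is diagonalisable with eigenvalues that are $n$-th roots of unity (Lemma~\ref{lem:spectral-decomposition}\ref{lem:spectral-decomposition:itm:decomp}), and therefore $T^n=\id_E$; equivalently, Lemma~\ref{lem:spectral-decomposition}\ref{lem:spectral-decomposition:itm:id} gives this directly. Then $\overline{T}^n=\overline{T^n}=\id$, so by the spectral mapping theorem $\spec(\overline{T})$ consists of $n$-th roots of unity. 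Since $G$ is the group of all $n$-th roots of unity, we get $\spec(\overline{T})\subseteq G$.

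For part~\ref{thm:roots-of-unity-general:itm:proj} we use Lemma~\ref{lem:zero-pole-shift}. By Lemma~\ref{lem:spectral-decomposition}\ref{lem:spectral-decomposition:itm:proj}, $0$ is a pole of the resolvent. Let $Q$ be its spectral projection, $P=\id_E-Q$, and $S=PT+Q$, i.e.\ we choose $\lambda_0=1$. Then $S$ acts as $T$ on $\operatorname{ran}P$ and as the identity on $\operatorname{ran}Q$. Hence $\spec(S)=(\spec(T)\setminus\{0\})\cup\{1\}$, which consists of roots of unity that are first order poles. So $S$ is a root of unity and generates the same group $G$, since $1\in G$ anyway. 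By part~\ref{thm:roots-of-unity-general:itm:id}, $\spec(\overline{S})=G$, and Lemma~\ref{lem:zero-pole-shift} gives $\spec(\overline{T})\setminus\{0\}\subseteq G$.

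The main point needing care is the claim that $S$ is a root of unity. The spectral projections of $S$ must be checked to be those of $T$ on $\operatorname{ran}P$ together with $Q$ attached to the eigenvalue $1$. This holds because $P$ and $Q$ commute with $T$ and decompose the space into $T$-invariant pieces. Everything else follows directly from the earlier results.
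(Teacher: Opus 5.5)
Your proposal is correct and follows the paper's proof. The lower bound comes from Theorem~\ref{thm:pnt-spec-circle-subgroup}, and the upper bound in part~(b) comes from Lemma~\ref{lem:zero-pole-shift} applied to $S = PT + \lambda_0 Q$, reducing to part~(a). The differences are minor: you take $\lambda_0 = 1$ rather than some $\lambda_0 \in \spec(T)\setminus\{0\}$, and you spell out the upper bound $\spec(\overline{T}) \subseteq G$ in part~(a) via $\overline{T}^{\,n} = \overline{T^n} = \id$ with $n = |G|$, a step the paper's proof leaves implicit.
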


\begin{proof}
    In each case, $G \subseteq \bbT$ denote the subgroup generated by $\spec(T) \setminus \{0\} \subseteq \bbT$.
    
    \begin{description}[leftmargin=0.65cm]
        \item[\ref{thm:roots-of-unity-general:itm:id}]
        Obviously $\spec(T) \subseteq G$. 
        On the other hand, $\spec(T) = \pntSpec(T)$ by Lemma~\ref{lem:spectral-decomposition}\ref{lem:spectral-decomposition:itm:id}, so $G \subseteq \pntSpec(\overline{T}) \subseteq \spec(\overline{T})$ according to Theorem~\ref{thm:pnt-spec-circle-subgroup}. 

        \item[\ref{thm:roots-of-unity-general:itm:proj}] 
        One has $\spec(T) = \pntSpec(T)$ by Lemma~\ref{lem:spectral-decomposition}\ref{lem:spectral-decomposition:itm:proj}, so $G \subseteq \pntSpec(\overline{T}) \subseteq \spec(\overline{T})$ according to Theorem~\ref{thm:pnt-spec-circle-subgroup}. 
        Moreover, $0 \in \pntSpec(T)$, so also $0 \in \pntSpec(\overline{T}) \subseteq \spec(\overline{T})$.
        
        To prove the converse inclusion choose $\lambda_0 \in \spec(T) \setminus \{0\}$; 
        such a number exists since $\spr(T) > 0$.
        Note that $0$ is a pole of the resolvent of $T$ according to Lemma~\ref{lem:spectral-decomposition}\ref{lem:spectral-decomposition:itm:proj}; let $Q$ denote the associated spectral projection, $P \coloneqq \id_E - Q$ and $S \coloneqq PT + \lambda_0 Q$. 
        Lemma~\ref{lem:zero-pole-shift} shows that $\spec(\overline{T}) \setminus \{0\} \subseteq \spec(\overline{S})$. 
        Moreover, $S$ is a root of unity and $\spec(S) = \spec(T) \setminus \{0\}$. 
        Hence, it follows from~\ref{thm:roots-of-unity-general:itm:id} that $\spec(\overline{S}) = G$.
        \qedhere
    \end{description}
\end{proof}

\section{The approximate point spectrum of lattice homomorphism}
\label{sec:app-point-spec-overline-T}

The results presented this far might lead one to the following impression: 
if $T$ is a matrix with only positive semi-simple eigenvalues, then $\spec(\overline{T}) \subseteq [0,\infty)$; 
however, this is not actually true. 
We first give a counterexample and then generalize the underlying idea to a result about the approximate point spectrum of lattice homomorphisms.

\begin{example} \label{exa:pos-real}
    We again turn to the matrix $T=\tmatrix{1&0\\0&2}$ from Example~\ref{exa:counterexample-spectrum2}. 
    There is an approximate eigenvalue $(f_n)\subseteq C_{ph}((\bbC^2)',\bbC)$ for $-1$ given by
        \[ f_n(z') = \begin{cases}
                        \sqrt[n]{\frac{\modulus{z_2'}}{\modulus{z_1}}}\modulus{z_1'}\sin\left(\frac{\pi}{\ln(2)}\ln\left(\frac{\modulus{z_2'}}{\modulus{z_1'}}\right)\right), & \text{if } \modulus{z_1'}>0 \\
                        0, & \text{if } \modulus{z_1'} = 0.
                    \end{cases} \]
    Indeed, as in Example~\ref{exa:counterexample-spectrum2} we can use the formula $\overline{T}f_n = f_n\circ T'$ to calculate
        \[ (\overline{T}f_n+f_n)(z') = f(T'z')+f(z') = (1-\sqrt[n]{2})f_n(z') \]
    for all $z' \in \bbC^2 \simeq (\bbC^2)'$;
    which converges uniformly to $0$ as $n \to \infty$. 
    Generalizing this idea leads to Theorem~\ref{thm:dounut-from-eigenvalues} below, which can be used together with Lemma~\ref{lem:spectral-radius} to obtain $\appSpec(T) = \spec(T) = [1,2]\bbT$.
\end{example}

The idea behind Example~\ref{exa:pos-real} can be turned into a general theorem, which will be very useful in our general analysis of the spectrum of $\overline{T}$ in the next section.

\begin{theorem} \label{thm:dounut-from-eigenvalues}
    Let $E$ be a complex Banach lattice and $S:E\to E$ a lattice homomorphism with the  spectral values $\lambda_1,\lambda_2\in \bbC$ such that $0<\modulus{\lambda_1}<\modulus{\lambda_2}$.
    \begin{enumerate}[label=\upshape(\alph*)]
        \item\label{thm:dounut-from-eigenvalues:itm:eigen} 
        Let $\lambda_1, \lambda_2$ be eigenvalues of $S$ with corresponding eigenvectors $x_1,x_2 \in E$. 
        If $x_1$ and $x_2$ are not disjoint, then $[\modulus{\lambda_1},\modulus{\lambda_2}]\bbT \subseteq \appSpec(S)$. 

        \item\label{thm:dounut-from-eigenvalues:itm:approx} 
        More generally, let $\lambda_1, \lambda_2$ be approximate eigenvalues of $S$ with corresponding approximate eigenvectors $(x_n),(y_n)$. 
        Assume that $(x_n)$ and $(y_n)$ are not approximately disjoint, i.e.\ that $(\modulus{x_n}\land \modulus{y_n}) \notin c_0(E)$. 
        Then $[\modulus{\lambda_1},\modulus{\lambda_2}]\bbT \subseteq \appSpec(S)$. 
    \end{enumerate}
\end{theorem}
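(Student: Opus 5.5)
Part~\ref{thm:dounut-from-eigenvalues:itm:eigen} is the special case of part~\ref{thm:dounut-from-eigenvalues:itm:approx} with the constant sequences $x_n = x_1$, $y_n = x_2$, since non-disjoint eigenvectors are trivially not approximately disjoint. So I would only prove~\ref{thm:dounut-from-eigenvalues:itm:approx}. The plan is to imitate Example~\ref{exa:counterexample-spectrum2} and Example~\ref{exa:pos-real}: find a function $g \in C_{ph}(\bbC^2,\bbC)$ that is an \emph{exact} eigenfunction of composition with $D \coloneqq \tmatrix{\lambda_1&0\\0&\lambda_2}$. Then transport approximate eigenvectors through the functional calculus, using the uniform continuity from Theorem~\ref{thm:funccalc-properties}\ref{thm:funccalc-properties:itm:continuous}.

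\textbf{Setup.} Normalize $\norm{x_n} = \norm{y_n} = 1$. Pass to a subsequence so that $\norm{\modulus{x_n}\land\modulus{y_n}} \ge \varepsilon > 0$ for all $n$. Write $a \coloneqq \modulus{\lambda_1}$, $b \coloneqq \modulus{\lambda_2}$ and $c \coloneqq \ln(b/a) > 0$. Since $\appSpec(S)$ is closed, it suffices to show $\rho e^{\iu\varphi} \in \appSpec(S)$ for $\rho \in (a,b)$ and $\varphi \in \bbR$. Write $\rho = a^{1-\gamma} b^{\gamma}$ with $\gamma \in (0,1)$.

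\textbf{The eigenfunction.} Define $g(z) \coloneqq \modulus{z_1}^{1-\gamma}\modulus{z_2}^{\gamma}\exp\big(\tfrac{\iu\varphi}{c}\ln\tfrac{\modulus{z_2}}{\modulus{z_1}}\big)$ if $z_1 z_2 \neq 0$, and $g(z) \coloneqq 0$ otherwise.
\begin{itemize}
\item $g$ is positively homogeneous, because the exponential factor is invariant under scaling.
\item $g$ is continuous, because the exponential factor has modulus one and $\modulus{z_1}^{1-\gamma}\modulus{z_2}^{\gamma} \to 0$ whenever $z_1$ or $z_2$ tends to $0$; this uses $0<\gamma<1$.
\item Replacing $z$ by $Dz$ multiplies $\modulus{z_1}^{1-\gamma}\modulus{z_2}^\gamma$ by $\rho$ and shifts $\ln(\modulus{z_2}/\modulus{z_1})$ by $c$. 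Hence $g \circ D = \rho e^{\iu\varphi} g$ exactly.
\end{itemize}

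\textbf{Approximate eigenvectors.} Set $w_n \coloneqq g(x_n,y_n)$. By Theorem~\ref{thm:funccalc-properties}\ref{thm:funccalc-properties:itm:LatHom},
\[
Sw_n = g(Sx_n,Sy_n).
\]
The sequences $(Sx_n,Sy_n)$ and $(\lambda_1x_n,\lambda_2y_n)$ are bounded and their difference tends to $0$. So Theorem~\ref{thm:funccalc-properties}\ref{thm:funccalc-properties:itm:continuous} gives $g(Sx_n,Sy_n) - g(\lambda_1 x_n,\lambda_2 y_n) \to 0$. Moreover
\[
g(\lambda_1x_n,\lambda_2y_n) = (g\circ D)(x_n,y_n) = \rho e^{\iu\varphi} w_n,
\]
hence $Sw_n - \rho e^{\iu\varphi}w_n \to 0$. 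The $w_n$ are bounded by Theorem~\ref{thm:funccalc-properties}\ref{thm:funccalc-properties:itm:bounded}.

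\textbf{Lower bound on $w_n$ (main point).} We need $w_n$ bounded away from zero. One has $\modulus{g}(z) = \modulus{z_1}^{1-\gamma}\modulus{z_2}^{\gamma} \ge \modulus{z_1}\land\modulus{z_2}$ pointwise. Both sides are elements of $C_{ph}(\bbC^2,\bbC)$, so Theorem~\ref{thm:funccalc-properties}\ref{thm:funccalc-properties:itm:modulus} and~\ref{thm:funccalc-properties:itm:positive} give
\[
\modulus{w_n} \ge \modulus{x_n}\land\modulus{y_n}.
\]
Therefore $\norm{w_n} \ge \varepsilon$. Then $w_n/\norm{w_n}$ is an approximate eigenvector for $\rho e^{\iu\varphi}$. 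By closedness, the endpoint circles $a\bbT$ and $b\bbT$ are included as well, which gives $[\modulus{\lambda_1},\modulus{\lambda_2}]\bbT \subseteq \appSpec(S)$.

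The step I expect to need the most care is verifying that $g$ genuinely lies in $C_{ph}(\bbC^2,\bbC)$: it must be continuous at the axes while the unbounded logarithm sits inside a unimodular factor. This is exactly why I restrict to the open range $\gamma \in (0,1)$ and then recover the endpoint circles by closedness.
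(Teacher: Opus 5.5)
Your proof is correct, but it takes a different route from the paper's.

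\textbf{The paper's route.} The paper proves \ref{thm:dounut-from-eigenvalues:itm:eigen} first. It uses functions $f_n$ that carry an extra factor $\sqrt[n]{\modulus{z_2}/\modulus{z_1}}$, so they are only approximate eigenfunctions of composition with $D$. The error is $\big(\sqrt[n]{\modulus{\lambda_2}/\modulus{\lambda_1}}-1\big)re^{\iu\varphi}f_n$, controlled via Theorem~\ref{thm:funccalc-properties}\ref{thm:funccalc-properties:itm:bounded}. The lower bound comes from monotonicity of $\modulus{f_n}$ on $\bbR^2_+$. Part~\ref{thm:dounut-from-eigenvalues:itm:approx} is then deduced from~\ref{thm:dounut-from-eigenvalues:itm:eigen}: one passes to the induced lattice homomorphism on $\ell^\infty(E)/c_0(E)$ and invokes Meyer-Nieberg's identification of its point spectrum with $\appSpec(S)$.

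\textbf{Your route.} You notice that for radii strictly between $\modulus{\lambda_1}$ and $\modulus{\lambda_2}$ no correction factor is needed. Since $0<\gamma<1$, $g$ is already continuous, and $g\circ D=\rho e^{\iu\varphi}g$ holds exactly. This lets you prove~\ref{thm:dounut-from-eigenvalues:itm:approx} directly from Theorem~\ref{thm:funccalc-properties}\ref{thm:funccalc-properties:itm:continuous}. That property hides the same $\ell^\infty/c_0$ idea, but you never need the spectral identification on the quotient. You recover the endpoint circles by closedness. The paper needs closedness there too, since exponents $e_1,e_2\ge 1$ cannot produce the endpoint radii. Your lower bound $\modulus{g}\ge\modulus{z_1}\wedge\modulus{z_2}$, via positivity of the calculus, is also simpler than the monotonicity argument.

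\textbf{What each route buys.} Your route gives a real strengthening in the situation of~\ref{thm:dounut-from-eigenvalues:itm:eigen}: $g(x_1,x_2)$ is an exact eigenvector, so $(\modulus{\lambda_1},\modulus{\lambda_2})\bbT\subseteq\pntSpec(S)$, not merely $\subseteq \appSpec(S)$. Combined with Lemma~\ref{FBLDisjoinedness}, this would make the non-semi-simplicity hypothesis in Corollary~\ref{cor:spec-donut} unnecessary. The paper's route instead illustrates a general transfer principle from eigenvectors to approximate eigenvectors. That principle does not depend on having an exact eigenfunction.
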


\begin{proof}
    \begin{description}[leftmargin=0.65cm]
        \item[\ref{thm:dounut-from-eigenvalues:itm:eigen}]
        Let $\varphi\in [0,2\pi)$ and $r\in [\modulus{\lambda_1},\modulus{\lambda_2}]$. 
        We denote the eigenvectors corresponding to $\lambda_1$ and $\lambda_2$ by $x_1$ and $x_2$ and set $x\coloneqq (x_1,x_2) \in E^2$. 
        Let $D_\lambda\in\bbC^{2\times 2}$ denote the diagonal matrix with the entries $\lambda_1$ and $\lambda_2$. 
        Clearly $S x = D_\lambda x$. 
        There are $e_1,e_2\geq 1$ such that $r = (\modulus{\lambda_1}^{e_1}\modulus{\lambda_1}^{e_2})^{\frac1{e_1+e_2}}$. 
        For each $n>\tfrac{e_1+e_2}{e_1}$ the function $f_n: \bbC^2 \to \bbC$ defined by
        \begin{equation*} \label{usefulFunktion3}  
            f_n(z)
            = 
            \begin{cases}
                (\modulus{z_1}^{e_1}\modulus{z_2}^{e_2})^\frac{1}{e_1+e_2}
                \sqrt[n]{\frac{\modulus{z_2}}{\modulus{z_1}}}
                \exp\left(\frac{\iu\varphi}{\ln\left(\frac{\modulus{\lambda_2}}{\modulus{\lambda_1}}\right)}\ln\left(\frac{\modulus{z_2}}{\modulus{z_1}}\right)\right),
                & \text{if } \modulus{z_1}>0 
                \\
                0, & \text{if } \modulus{z_1}=0
            \end{cases} 
        \end{equation*} 
        is continuous and is thus an element of $C_{ph}(\bbC^2,\bbC)$.
        Using Theorem~\ref{thm:funccalc-properties} one can check that
        \[ Sf_n(x) - re^{\iu\varphi}f_n(x) = f_n\circ D_\lambda (x)-re^{\iu\varphi}f_n(x) = g_n(x). \]
        where $g_n\in C_{ph}(\bbC^2,\bbC)$ is defined as
            \[ g_n(z)
            =
            \begin{cases}
                \Big(\sqrt[n]{{\frac{\modulus{\lambda_2}}{\modulus{\lambda_1}}}}-1\Big)re^{\iu\varphi}f_n(z),
                & \text{if } \modulus{z_1} > 0 
                \\
                0,
                & \text{if } \modulus{z_1} = 0.
            \end{cases}\]
        By Theorem~\ref{thm:funccalc-properties}\ref{thm:funccalc-properties:itm:bounded} $\norm{g_n(x)}$ tends to $0$ for $n\to \infty$, because $\norm{g_n}$ does so.
        Finally we show that $f_n(x)$ does not converge to $0$. 
        By assumption we know that $y\coloneqq \modulus{x_1} \land \modulus{x_2} \neq 0$, 
        so Theorem~\ref{thm:funccalc-properties}\ref{thm:funccalc-properties:itm:monotone} implies
            \[ \modulus{f_n(x)}  = \modulus{f_n(\modulus{x})} \geq \modulus{f_n\tmatrix{y\\ y}} = \modulus{y}; \] 
        here we used that each function $f_n$ is increasing on $\bbC^2_+ = \bbR^2_+$ since $n>\tfrac{e_1+e_2}{e_1}$.  
        So $\norm{f_n(x)} \geq \norm{y}$ for all $n$.

        \item[\ref{thm:dounut-from-eigenvalues:itm:approx}]
        We apply \cite[Theorem 4.1.6 on p.\,252]{MeyerNieberg1991} to $T$. 
        By choosing the Fréchet filter in this reference we get that 
        \[ 
            \hat T:\ell^\infty(E)/c_0(E) \to \ell^\infty(E)/c_0(E),
            \quad 
            \hat T (v_n + c_0(E)) = (Tv_n) + c_0(E) 
        \] 
        is a lattice homomorphism satisfying $\appSpec(\hat T) = \pntSpec(\hat T) = \appSpec(T)$. 
        By assumption $(x_n),(y_n)$ are not lattice disjoint in $\ell^\infty(E)/c_0(E)$, so we can apply~\ref{thm:dounut-from-eigenvalues:itm:eigen} to $\hat T$ and thus obtain $[\modulus{\lambda_1},\modulus{\lambda_2}]\bbT\subseteq \appSpec(\hat T) = \appSpec(T)$.
        \qedhere 
    \end{description}
\end{proof}

Let us briefly explain why Theorem~\ref{thm:dounut-from-eigenvalues} can be applied to liftings of operators to free Banach lattices:

\begin{remark}
    \label{rem:approx-applicable}
    If $T:E\to E$ is a bounded linear operator on a Banach space $E$ with eigenvalues $0<\modulus{\lambda_1} < \modulus{\lambda_2}$, Theorem~\ref{thm:dounut-from-eigenvalues}\ref{thm:dounut-from-eigenvalues:itm:eigen} yields that $[\modulus{\lambda_1},\modulus{\lambda_2}]\bbT \subseteq \appSpec(\overline{T})$ since the corresponding eigenvectors of $\overline{T}$ are never lattice disjoint by Lemma~\ref{FBLDisjoinedness}. 
    
    Similarly if $\lambda_1,\lambda_2$ are approximate eigenvalues of $T$, Lemma~\ref{FBLDisjoinedness12} implies that the corresponding approximate eigenvectors are not approximately disjoined. Thus in this case Theorem~\ref{thm:dounut-from-eigenvalues}\ref{thm:dounut-from-eigenvalues:itm:approx} yields $[|\lambda_1|,\modulus{\lambda_2}]\bbT\subseteq\appSpec(\overline{T})$.
\end{remark}

It is illuminating to discuss the proof of Theorem~\ref{thm:dounut-from-eigenvalues}\ref{thm:dounut-from-eigenvalues:itm:eigen} in the following concrete example.

\begin{example}
    Let $p \in [1,\infty]$.
    We revisit the left shift operator $L$ on $\ell^p$ from Example~\ref{exa:eigenvectors-vanish}. There we saw that for eigenvalues $0<\lambda_1<\lambda_2<1$ the eigenvectors are not disjoint, so Theorem~\ref{thm:dounut-from-eigenvalues}\ref{thm:dounut-from-eigenvalues:itm:eigen} yields $\bbD\subseteq\appSpec(L)$. 
    Of course we know that $\bbD$ is even in the point spectrum of $L$; 
    this motivates the question if one can sometimes obtain eigenvalues rather than just approximate eigenvalues from the proof technique of Theorem~\ref{thm:dounut-from-eigenvalues}\ref{thm:dounut-from-eigenvalues:itm:eigen}. 

    In the present example this is possible: 
    the sequence $(f_n)$ in the proof converges pointwise to a positively homogeneous, but non-continuous function $f \colon \bbC^2 \to \bbC$. 
    On $\ell^p$, one can define a functional calculus for non-continuous, positively homogeneous functions componentwise. 
    So if $x_1, x_2 \in \ell^p$ are eigenvectors for the eigenvalues $\lambda_1, \lambda_2$ of $L$, then $f(x_1, x_2)$ is an element of $\ell^p$, and it is indeed an eigenvector of $L$ for the eigenvalue $r e^{i\varphi}$.
\end{example}

\section{A characterization of the spectrum of $\overline{T}$}
\label{sec:spec-char}

In this section we finally prove our main result, a full characterization of $\spec(\overline{T})$ in terms of the spectral properties of $T$ (Theorem~\ref{thm:spec-char-general}).
Before we can do so, we still need to analyze in the case $0 \in \spec(T)$ and $0 < \spri(T)$ whether the numbers $\lambda\in\bbC$ with $0 < \modulus{\lambda} < \spri(T)$ are in the spectrum of $\overline{T}$. 
We do this in Lemmas~\ref{lem:dounut-empty-general} and~\ref{lem:zero-essential-singularity} and in Corollary~\ref{cor:zero-essential-singularity}; 
it turns out that the answer depends on whether $0$ is a pole or an essential singularity of the resolvent of $T$.

\begin{lemma} \label{lem:dounut-empty-general}
    Let $T$ be a bounded linear operator on a Banach space $E$ such that $\{0\}\subsetneq \spec(T)$ and $0$ is a pole of the resolvent of $T$. Then $\spec(\overline{T})\subseteq [q(T),r(T)]\bbT\cup\{0\}$.
\end{lemma}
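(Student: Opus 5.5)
The plan is to reduce to the invertible case by splitting off the spectral value $0$, exactly as in the proof of Theorem~\ref{thm:roots-of-unity-general}\ref{thm:roots-of-unity-general:itm:proj}. Let $Q$ be the spectral projection of $T$ associated with the isolated spectral value $0$, which is a pole by assumption. Set $P \coloneqq \id_E - Q$. Since $\{0\} \subsetneq \spec(T)$, the number $q(T)$ is a well-defined positive real number: $0$ is isolated, so $\spec(T)\setminus\{0\}$ is a nonempty compact set bounded away from $0$. Choose some $\lambda_0 \in \spec(T)\setminus\{0\}$, for instance one with $\modulus{\lambda_0} = q(T)$, and define $S \coloneqq PT + \lambda_0 Q$.

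By Lemma~\ref{lem:zero-pole-shift} we get
\[
\spec(\overline{T})\setminus\{0\} \subseteq \spec(\overline{S}).
\]
So it suffices to show $\spec(\overline{S}) \subseteq [q(T), r(T)]\bbT$. For this I will apply Lemma~\ref{lem:spectral-radius} to $S$, which gives $\spec(\overline{S}) \subseteq [\inf_{\mu\in\spec(S)}\modulus{\mu}, \spr(S)]\bbT$. This reduces the problem to computing $\spec(S)$.

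The operator $S$ is reduced by the decomposition $E = PE \oplus QE$. On $PE$ it acts as $T|_{PE}$, whose spectrum is $\spec(T)\setminus\{0\}$. On $QE$ it acts as $\lambda_0 \id_{QE}$, with spectrum $\{\lambda_0\}$, provided $QE \neq \{0\}$, which holds since $0 \in \spec(T)$. Hence
\[
\spec(S) = \big(\spec(T)\setminus\{0\}\big) \cup \{\lambda_0\} = \spec(T)\setminus\{0\}.
\]
Therefore $0 \notin \spec(S)$, $\inf_{\mu\in\spec(S)}\modulus{\mu} = q(T)$, and $\spr(S) = r(T)$. Combining these facts yields $\spec(\overline{T}) \subseteq [q(T),r(T)]\bbT \cup \{0\}$.

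I do not expect a serious obstacle, since the heavy lifting is already done in Lemma~\ref{lem:zero-pole-shift}. The only points needing care are the standard spectral-theory facts: that $S$ decomposes along $PE\oplus QE$ because $T$ commutes with $Q$, that $\spec(T|_{PE}) = \spec(T)\setminus\{0\}$, and that $\spec(T)\setminus\{0\}$ is nonempty and bounded away from zero.
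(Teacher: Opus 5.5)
Your proposal is correct and follows essentially the same argument as the paper. Like the paper, you split off $0$ via $S = PT + \lambda_0 Q$, use Lemma~\ref{lem:zero-pole-shift} to get $\spec(\overline{T})\setminus\{0\}\subseteq\spec(\overline{S})$, and apply Lemma~\ref{lem:spectral-radius} to $S$ using $\spec(S)=\spec(T)\setminus\{0\}$.
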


\begin{proof}
    Let $Q$ be the spectral projection of $T$ for the spectral value $0$ and set $P \coloneqq \id_E-Q$. 
    We define $S=PT+\lambda_0Q$ for some $\lambda_0\in\spec(T)\setminus\{0\}$. 
    Then $\spec(S) = \spec(T) \setminus\{0\}$, 
    so it follows from Lemma~\ref{lem:spectral-radius} (applied to the operator $S$) 
    that $\spec(\overline{S}) \subseteq [\spri(S), \spr(S)] \bbT = [\spri(T), \spr(T)] \bbT$. 
    Moreover, Lemma~\ref{lem:zero-pole-shift} shows that $\spec(\overline{T}) \setminus \{0\} \subseteq \spec(\overline{S})$. 
\end{proof}

\begin{lemma} \label{lem:zero-essential-singularity}
    Let $E$ be a complex Banach lattice and $S:E\to E$ a lattice homomorphism such that $\{0\}\subsetneq\spec(S)$. Let $(w_n)\subseteq E$ be an approximate eigenvector corresponding to $\spr(S)$ and assume there are sequence $(\alpha_n)\subseteq (0,\infty)$, $(\lambda_n)\subseteq \bbC \setminus \{0\}$, and bounded sequences $(x_n),(y_n)\subseteq E$ such that $Sy_n = -\tfrac{x_n}{\alpha_n}+\lambda_ny_n$, $\lambda_n \to 0$ and $\alpha_n\modulus{\lambda_n}^k\to\infty$ as $n\to\infty$ for every $k\in \bbN$. If $(w_n)$ and $(y_n)$ are not approximately disjoint, i.e.\ if $\modulus{w_n} \land \modulus{y_n} \not\to 0$, then $\spec(S)= \spr(S) \overline{\bbD}$.
\end{lemma}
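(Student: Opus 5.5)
The plan is to show that every $\rho \in (0,\spr(S))$ lies in $\appSpec(S)$. We then upgrade this to all of $\spr(S)\overline{\bbD}$ using Theorem~\ref{thm:dounut-from-eigenvalues}\ref{thm:dounut-from-eigenvalues:itm:approx} and the closedness of the spectrum. The reverse inclusion $\spec(S)\subseteq \spr(S)\overline{\bbD}$ is trivial.

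Normalize $\spr(S)=1$. Since $S$ is positive, $1\in\spec(S)$, and it lies on the boundary of the spectrum, so it is an approximate eigenvalue; we may therefore take $(w_n)$ with $Sw_n - w_n \to 0$. Rescale so that $(w_n),(x_n),(y_n)$ are bounded by $1$ while keeping $\modulus{w_n}\land\modulus{y_n}\not\to 0$ (pass to a subsequence where this infimum is bounded below in norm).

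Rewrite the relation as $Sy_n = \lambda_n(y_n - d_n)$ with $d_n \coloneqq x_n/(\alpha_n\lambda_n)$. Then $\norm{d_n}\le 1/(\alpha_n\modulus{\lambda_n})\to 0$.

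Fix $c\in(0,1)$ and choose $s_n\in(0,1)$ with $\modulus{\lambda_n}^{s_n}=c$, i.e.\ $s_n=\ln c/\ln\modulus{\lambda_n}\to 0$. Consider the functions $f_n(z)=\modulus{z_1}^{1-s_n}\modulus{z_2}^{s_n}\in C_{ph}(\bbC^2,\bbC)$ and the vectors $u_n\coloneqq f_n(w_n,y_n)$. By Theorem~\ref{thm:funccalc-properties}\ref{thm:funccalc-properties:itm:LatHom},
\[
Su_n = f_n(Sw_n,Sy_n) = \modulus{\lambda_n}^{s_n}\,\modulus{Sw_n}^{1-s_n}\modulus{y_n-d_n}^{s_n},
\]
and we compare this with $c\,u_n = c\,\modulus{w_n}^{1-s_n}\modulus{y_n}^{s_n}$. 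Passing through the Kakutani representation (as in \eqref{PhiDarstellung}), the scalar inequalities $\bigl|\modulus{a+b}^s-\modulus{a}^s\bigr|\le\modulus{b}^s$ and $\modulus{a}^{s}\modulus{b}^{1-s}\le s\modulus{a}+(1-s)\modulus{b}$ transfer to $E$ by Theorem~\ref{thm:funccalc-properties}\ref{thm:funccalc-properties:itm:positive}. They bound $\norm{Su_n-cu_n}$ by terms of order $\norm{Sw_n-w_n}^{1-s_n}$ (hence $\to0$, since $s_n\to0$) plus terms of order $\norm{d_n}^{s_n}\le \exp\bigl(-s_n\ln(\alpha_n\modulus{\lambda_n})\bigr)$.

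This last term is the main obstacle. Since $s_n = \ln(1/c)/\ln(1/\modulus{\lambda_n})$, it tends to $0$ precisely when
\[
\frac{\ln(\alpha_n\modulus{\lambda_n})}{\ln(1/\modulus{\lambda_n})}\to\infty,
\]
which is exactly what the hypothesis $\alpha_n\modulus{\lambda_n}^k\to\infty$ for every $k$ gives. I expect the careful bookkeeping of these mixed-exponent estimates, uniformly in the varying exponent $s_n$, to be the delicate part.

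Nondegeneracy follows from monotonicity. Since $f_n$ is increasing on $\bbR_+^2$ and has total degree $1$, Theorem~\ref{thm:funccalc-properties}\ref{thm:funccalc-properties:itm:monotone} gives $\modulus{u_n}\ge \modulus{w_n}\land\modulus{y_n}$. Hence $\norm{u_n}$ is bounded below, and $(u_n)$ is (after normalization) an approximate eigenvector for $c$. Thus $(0,1)\subseteq\appSpec(S)$.

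To fill in the phases, take $0<c_1<c_2\le 1$. The approximate eigenvectors $(u_n^{(1)})$, $(u_n^{(2)})$ built above (using $(w_n)$ itself for $c_2=1$) both dominate $\modulus{w_n}\land\modulus{y_n}$ in modulus. Hence $\modulus{u^{(1)}_n}\land\modulus{u^{(2)}_n}\ge\modulus{w_n}\land\modulus{y_n}\not\to0$, so they are not approximately disjoint. Theorem~\ref{thm:dounut-from-eigenvalues}\ref{thm:dounut-from-eigenvalues:itm:approx} then yields $[c_1,c_2]\bbT\subseteq\appSpec(S)$. Letting $c_1\to0$ and using the closedness of $\spec(S)$ gives $\overline{\bbD}\subseteq\spec(S)$, and hence $\spec(S)=\spr(S)\overline{\bbD}$.
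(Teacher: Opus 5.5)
Your proposal is correct and follows the paper's proof essentially step for step. It uses the same functions $\modulus{z_1}^{1-s_n}\modulus{z_2}^{s_n}$ with $\modulus{\lambda_n}^{s_n}=c$, the same two-term splitting via $\bigl|\modulus{b'}^{s}-\modulus{b}^{s}\bigr|\le\modulus{b'-b}^{s}$, the same lower bound $\modulus{u_n}\ge\modulus{w_n}\land\modulus{y_n}$ from monotonicity, and the same final appeal to Theorem~\ref{thm:dounut-from-eigenvalues}\ref{thm:dounut-from-eigenvalues:itm:approx} with $(w_n)$ for the value $1$.

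The only difference is in how the error term is estimated. The paper keeps $\lambda_n y_n$ and $x_n/\alpha_n$ inside the arguments and absorbs $1/\alpha_n$ into a function $g_n$ with $\norm{g_n}\le\alpha_n^{-\gamma_n}\to0$, so the linear bound of Theorem~\ref{thm:funccalc-properties}\ref{thm:funccalc-properties:itm:bounded} suffices. Because you factor out $\lambda_n$, you instead need $\norm{\modulus{a}^{1-s}\modulus{b}^{s}}\le\norm{a}^{1-s}\norm{b}^{s}$. This follows from Young's inequality only after rescaling $a\mapsto t^{s}a$, $b\mapsto t^{-(1-s)}b$ with $t=\norm{b}/\norm{a}$. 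Applied without rescaling to $\modulus{Sw_n}^{1-s_n}\modulus{d_n}^{s_n}$, Young only gives the non-vanishing bound $(1-s_n)\norm{Sw_n}+s_n\norm{d_n}$, so make that rescaling step explicit.
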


\begin{proof}
    By means of rescaling $S$ we can assume $\spr(S) = 1$. 
    Moreover, by replacing all sequences with subsequences may assume that $\liminf_n \norm{\modulus{w_n} \land \modulus{y_n}} > 0$.
    
    Fix $r\in(0,1)$.
    By dropping finitely many elements of all sequences, 
    we can achieve that that $\modulus{\lambda_n} \le r$ for all indices $n$. 
    For each $n$, choose $\gamma_n \in (0,1]$ such that $r = \modulus{\lambda_n}^{\gamma_n}$, 
    i.e.\ $\gamma_n \coloneqq \tfrac{\ln r}{\ln \modulus{\lambda_n}}$. 
    We define $h_n,\tilde h_n\in C_{ph}(\bbC^2,\bbC)$ by
    \[
        h_n(z) \coloneqq \modulus{z_1}^{1-\gamma_n}\modulus{z_2}^{\gamma_n} \quad\text{and}\quad 
        g_n(z) \coloneqq \modulus{z_1}^{1-\gamma_n}\modulus{\tfrac{z_2}{\alpha_n}}^{\gamma_n}
    \]
    We will first show that $\left(h_n\tmatrix{w_n\\y_n}\right)\subseteq E$ is an approximate eigenvector of $S$ for $r$, so $r$ is an approximate eigenvalue of $S$. 
    To this end we calculate for all $z,\tilde z\in \bbC^2$ and all $n$
    \begin{align*}
        \modulus{h_n(z) - rh_n(\tilde z)} 
        &= \modulus{\modulus{z_1}^{1-\gamma_n}\modulus{z_2}^{\gamma_n} - \modulus{\tilde z_1}^{1-\gamma_n}\modulus{\lambda_n\tilde z_2}^{\gamma_n}} \\
        &\leq \modulus{z_1}^{1-\gamma_n} \modulus{\modulus{z_2}^{\gamma_n} - \vphantom{\Big|} \modulus{\lambda_n\tilde z_2}^{\gamma_n}} + \modulus{\modulus{z_1}^{1-\gamma_n}- \modulus{\tilde z_1}^{1-\gamma_n}}\modulus{\lambda_n\tilde z_2}^{\gamma_n}.
    \end{align*}
    For all $s \in (0,1]$ and all $a,b \ge 0$ one has $\modulus{b^s-a^s} \le \modulus{b-a}^s$, so it follows that
    \begin{align*} 
        \modulus{h_n(z) - rh_n(\tilde z)} 
        &\leq \modulus{z_1}^{1-\gamma_n} \modulus{z_2-\lambda_n\tilde z_2}^{\gamma_n} + \modulus{z_1-\tilde z_1}^{1-\gamma_n}\modulus{\lambda_n\tilde z_2}^{\gamma_n} \\
        &= h_n\tmatrix{z_1\\z_2-\lambda_n\tilde z_2} + h_n\tmatrix{z_1-\tilde z_1\\\lambda_n\tilde z_2}.
    \end{align*}
    Thus, Theorem~\ref{thm:funccalc-properties}\ref{thm:funccalc-properties:itm:positive} implies
    \begin{align*} 
        \modulus{Sh_n\tmatrix{w_n\\y_n} - r h_n\tmatrix{w_n\\y_n}} 
        &\leq h_n\tmatrix{Sw_n\\Sy_n-\lambda_ny_n} + h_n\tmatrix{Sw_n-w_n\\\lambda_ny_n} \\
        &= g_n\tmatrix{Sw_n\\-x_n} + h_n\tmatrix{Sw_n-w_n\\\lambda_ny_n},
    \end{align*}
    where the equality uses the assumption $Sy_n = -\tfrac{x_n}{\alpha_n}+\lambda_ny_n$.
    Applying Theorem~\ref{thm:funccalc-properties}\ref{thm:funccalc-properties:itm:bounded} yields
    \begin{align*} 
        & \norm{Sh_n\tmatrix{w_n\\y_n} - r h_n\tmatrix{w_n\\y_n}} \\ 
        & \quad \leq \norm{g_n}(\norm{Sw_n} + \norm{x_n}) 
         + \underbrace{\norm{h_n}}_{\le 1}(\underbrace{\norm{Sw_n-w_n}}_{\to 0} + \underbrace{\modulus{\lambda_n}}_{\to 0} \norm{y_n}).
    \end{align*}
    Let us show that also $\norm{g_n} \to 0$.
    Indeed, fix an integer $k \ge 1$ and recall that $\alpha_n\modulus{\lambda_n}^k\to\infty$ as $n \to \infty$ by assumption. 
    For large $n$ one thus has $\alpha_n \ge \tfrac1{\modulus{\lambda_n}^k}$ and hence
    \[
        \ln \alpha_n^{-\gamma_n} 
        = 
        -\gamma_n\ln\alpha_n 
        \le 
        k \gamma_n \ln \modulus{\lambda_n}
        =
        k \ln r
        .
    \]
    So $\ln \alpha_n^{-\gamma_n} \to -\infty$ and therefore, $\alpha_n^{-\gamma_n} \to 0$ as $n \to \infty$. 
    This implies that $\norm{g_n} \to 0$, as claimed.

    We have thus proved that $Sh_n\tmatrix{w_n\\y_n} - r h_n\tmatrix{w_n\\y_n} \to 0$. 
    For $v_n\coloneqq\modulus{w_n}\land \modulus{y_n}$,
    \[
        \modulus{h_n\tmatrix{w_n\\y_n}} = h_n\tmatrix{\modulus{w_n}\\\modulus{y_n}} \geq \modulus{h_n\tmatrix{v_n\\v_n}} = v_n 
    \]
    by Theorem~\ref{thm:funccalc-properties}\ref{thm:funccalc-properties:itm:monotone}, 
    and $\liminf_n \norm{v_n} > 0$ as ensured at the beginning of the proof. 
    Hence, $\left(h_n\tmatrix{w_n\\y_n}\right)$ is indeed an approximate eigenvector of $S$ with approximate eigenvalue $r$. 
    
    Finally, observe that the approximate eigenvectors $\left(h_n\tmatrix{w_n\\y_n}\right)$ for $r$ and $(w_n)$ for $1$ both dominate the sequence $(v_n)$. 
    Hence, Theorem~\ref{thm:dounut-from-eigenvalues}\ref{thm:dounut-from-eigenvalues:itm:approx} is applicable and implies that $(r,1) \, \bbT \subseteq \appSpec(S) \subseteq \spec(S)$. 
    Since $r\in (0,1)$ was arbitrary and $\spec(S)$ is closed, we conclude that $\overline{\bbD} \subseteq \spec(S)$, which proves the claim.
\end{proof}

\begin{corollary} \label{cor:zero-essential-singularity}
    Let $T$ be a bounded linear operator on a complex Banach space $E$. If $0$ is an isolated spectral value of $T$ and an essential singularity of the resolvent, then $\spec(\overline{T}) = \spr(T)\overline{\bbD}$. 
\end{corollary}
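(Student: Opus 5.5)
The plan is to apply Lemma~\ref{lem:zero-essential-singularity} to the lattice homomorphism $S \coloneqq \overline{T}$ on $\FBL_\bbC[E]$. Every sequence it needs will be built in $E$ and transported by the isometric embedding $\delta_E$. The degenerate case $\spec(T) = \{0\}$ is handled first. There $\spr(T) = 0$, so $\spr(\overline{T}) = \spr(T) = 0$ (see the proof of Lemma~\ref{lem:spectral-radius}), and hence $\spec(\overline{T}) = \{0\} = \spr(T)\overline{\bbD}$. So assume $\{0\} \subsetneq \spec(T)$, and thus $\spr(T) = \spr(\overline{T}) > 0$. Then $\{0\} \subsetneq \spec(\overline{T})$, and the inclusion $\spec(\overline{T}) \subseteq \spr(T)\overline{\bbD}$ is automatic. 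It remains to verify the hypotheses of Lemma~\ref{lem:zero-essential-singularity}.

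\emph{The sequence $(w_n)$.} Pick $\lambda \in \spec(T)$ with $\modulus{\lambda} = \spr(T)$. It lies on the boundary of $\spec(T)$ and is therefore an approximate eigenvalue of $T$. Take normalized $u_n \in E$ with $Tu_n - \lambda u_n \to 0$ and set $w_n \coloneqq \modulus{\delta_E(u_n)}$. Since $\overline{T}$ is a lattice homomorphism,
\[
    \modulus{\overline{T} w_n - \spr(T) w_n} = \bigl|\, \modulus{\overline{T}\delta_E(u_n)} - \modulus{\lambda \delta_E(u_n)} \,\bigr| \le \modulus{\delta_E(Tu_n - \lambda u_n)}.
\]
So $(w_n)$ is an approximate eigenvector of $\overline{T}$ for $\spr(\overline{T})$; it is bounded away from $0$ because $\delta_E$ is isometric.

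\emph{The sequences $(y_n)$, $(x_n)$, $(\alpha_n)$, $(\lambda_n)$.} Let $Q$ be the spectral projection for the isolated point $0$, and let $T_0$ be the restriction of $T$ to $QE$. Then $T_0$ is quasinilpotent, and it is not nilpotent since $0$ is an essential singularity. I would first show that the resolvent grows faster than any power near $0$: there are $\lambda_n \to 0$, $0 < \modulus{\lambda_n} < 1$, with
\[
    \norm{R(\lambda_n, T_0)}\,\modulus{\lambda_n}^n \ge 2n .
\]
If this failed, one would have $\norm{R(\mu, T_0)} \le C \modulus{\mu}^{-k}$ near $0$ for some $k$. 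Then $\mu^k R(\mu,T_0)$ would be bounded near $0$, so by Riemann's removable singularity theorem $0$ would be a pole, contradicting the hypothesis. This step is the main technical point. Next choose unit vectors $v_n \in QE$ with $\norm{R(\lambda_n,T_0)v_n} \ge \tfrac12 \norm{R(\lambda_n,T_0)}$, and set
\[
    \alpha_n \coloneqq \norm{R(\lambda_n,T_0)v_n}, \qquad y_n \coloneqq R(\lambda_n,T_0)v_n/\alpha_n, \qquad x_n \coloneqq v_n .
\]
Then $Ty_n = \lambda_n y_n - x_n/\alpha_n$, with $\norm{y_n} = 1$ and $\norm{x_n}$ bounded by $\norm{Q}$. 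For fixed $k$ and all $n \ge k$,
\[
    \alpha_n \modulus{\lambda_n}^k \ge \alpha_n \modulus{\lambda_n}^n \ge n .
\]
Applying $\delta_E$ gives $\overline{T}\delta_E(y_n) = -\delta_E(x_n)/\alpha_n + \lambda_n \delta_E(y_n)$, with all sequences bounded.

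\emph{Non-disjointness.} Since $u_n$ and $y_n$ are normalized, Lemma~\ref{FBLDisjoinedness12} gives
\[
    \bigl\| \modulus{w_n} \land \modulus{\delta_E(y_n)} \bigr\| = \bigl\| \modulus{\delta_E(u_n)} \land \modulus{\delta_E(y_n)} \bigr\| \ge \tfrac16
\]
for all $n$. Hence the two sequences are not approximately disjoint. Lemma~\ref{lem:zero-essential-singularity} then yields $\spec(\overline{T}) = \spr(\overline{T})\overline{\bbD} = \spr(T)\overline{\bbD}$.
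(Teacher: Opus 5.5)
Your proposal is correct and follows the paper's proof. You apply Lemma~\ref{lem:zero-essential-singularity} to $\overline{T}$, take $(\lambda_n,\alpha_n,x_n,y_n)$ from the faster-than-any-power growth of the resolvent near $0$ (restricting to $QE$ is harmless but not needed), transport everything via $\delta_E$, and get non-disjointness from Lemma~\ref{FBLDisjoinedness12}. Your choice $w_n = \modulus{\delta_E(u_n)}$, for an approximate eigenvalue $\lambda$ of $T$ with $\modulus{\lambda} = \spr(T)$, is also an improvement: it gives an approximate eigenvector of $\overline{T}$ for $\spr(\overline{T})$ itself even when $\spr(T) \notin \spec(T)$, a case the paper's proof skips by simply taking an approximate eigenvector of $T$ ``for $\spr(T)$''.
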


\begin{proof}
    If $\spr(T) = 0$ there is nothing to show, so let us assume $\spr(T)>0$ and let $(\tilde w_n)\subseteq E$ be an approximate eigenvector for $\spr(T)$. 
    
    Since $0$ is an essential singularity of the resolvent, it follows for each integer $n \ge 1$ that $\lambda^n (\lambda \id_E - T)^{-1}$ is unbounded for $\lambda$ in any punctured neighbourhood of $0$. 
    Hence, for each $n$ we can find a number $\lambda_n$ in the resolvent set of $T$ such that $\modulus{\lambda_n} \le \frac{1}{n}$ and $\modulus{\lambda_n}^n \norm{(\lambda_n \id_E - T)^{-1}} \ge n$.
    For each $n$ there exists a vector $\tilde x_n \in E$ of norm $1$ such that $\alpha_n \coloneqq \norm{(\lambda_n \id_E - T)^{-1} \tilde x_n} \ge \frac{1}{2} \norm{(\lambda_n \id_E - T)^{-1}}$. 

    We have $\lambda_n \to 0$, and for all integers $n \ge k \ge 1$ one has $\alpha_n \modulus{\lambda_n}^k \ge \modulus{\lambda_n}^n \alpha_n \ge \frac{n}{2} \ge \frac{k}{2}$, so $\alpha_n \modulus{\lambda_n}^k \to \infty$ as $n \to \infty$.

    We define the normalized vector $\tilde y_n \coloneqq \tfrac1{\alpha_n} (\lambda_n \id_E - T)^{-1} \tilde x_n \in E$ and thus obtain 
    $T\tilde y_n = -\tfrac{\tilde x_n}{\alpha_n} + \lambda_n\tilde y_n$ for each $n$. 
    Finally we set $w_n \coloneqq \delta_E\tilde w_n$, $y_n \coloneqq \delta_E\tilde y_n$ and $x_n \coloneqq \delta_E\tilde x_n$ for all $n$. 
    It follows from Lemma~\ref{FBLDisjoinedness12} that $\norm{\modulus{w_n} \land \modulus{y_n}} \ge \frac{1}{6}$ for all $n$. 
    Thus, Lemma~\ref{lem:zero-essential-singularity} is applicable and gives $\sigma(\overline{T}) = \spr(\overline{T}) \overline{\bbD} = \spr(T) \overline{\bbD}$.
\end{proof}

Finally, we can combine our results so far to obtain the following Theorem~\ref{thm:spec-char-general}, which is the main result of the paper.
As mentioned in the introduction if $\spec(T) = \{0\}$ we know $\spec(\overline{T}) = \{0\}$ since $\spr(T) = \spr(\overline{T})$, so we only consider the case $\spr(T)>0$ and since one can simply rescale $T$ we assume $\spr(T) = 1$.

\begin{theorem} \label{thm:spec-char-general}
    Let $E$ be a complex Banach space and  
    let $T \colon E \to E$ be a bounded linear operator with spectral radius $\spr(T) = 1$. 
    The spectrum of $\overline{T}$ can be described by the following cases. 
    \begin{center}
        \begin{adjustbox}{max width=\textwidth}
            \begin{tabu}{|c||c|c|}
                \hline
                & 
                \parbox[c][1.5cm]{0.38\textwidth}{$T$ is essentially a root of unity} & 
                \parbox[c][1.5cm]{0.38\textwidth}{$T$ is not essentially a root of unity} \\ 
                %\tabucline[1.5pt]{1-4}
                \hhline{|=||=|=|}
                \parbox[c][1.5cm]{0.14\textwidth}{$0\not\in\spec(T)$} & 
                \parbox[c][1.5cm]{0.38\textwidth}{$\spec(\overline{T})$ is the multiplicative subgroup of $\bbT$ generated by $\spec(T)$} &
                $\spec(\overline{T}) = [\spri(T),1]\,\bbT$ \\ 
                \cline{1-3}
                \multirow{2}{*}{\parbox[c][1.5cm]{0.14\textwidth}{$0\in\spec(T)$}} & 
                \multirow{2}{*}{\parbox[c][3cm]{0.38\textwidth}{$\spec(\overline{T})$ consists of $0$ and the multiplicative subgroup of $\bbT$ generated by $\spec(T)\setminus\{0\}$ }} & 
                \parbox[c][1.5cm]{0.38\textwidth}{if $0$ is pole of the resolvent of $T$: $\spec(\overline{T}) = [\spri(T),1]\,\bbT\cup\{0\}$} \\
                \cline{3-3}
                & & 
                \parbox[c][1.5cm]{0.38\textwidth}{if $0$ is not a pole of the resolvent of $T$: $\spec(\overline{T}) = \overline{\bbD}$} \\
                \hline
            \end{tabu}
        \end{adjustbox}
    \end{center}
\end{theorem}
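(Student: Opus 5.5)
The proof goes cell by cell through the table, assembling results already proved.

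\emph{Left column.} If $T$ is essentially a root of unity, the two cells are exactly Theorem~\ref{thm:roots-of-unity-general}. If $0\notin\spec(T)$, then $T$ is a root of unity, since an invertible projection is the identity; this gives part~\ref{thm:roots-of-unity-general:itm:id}. If $0\in\spec(T)$, then $T$ is a proper essential root of unity, which gives part~\ref{thm:roots-of-unity-general:itm:proj}.

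\emph{Right column: upper bounds.} Assume now that $T$ is not essentially a root of unity.
\begin{itemize}
\item If $0\notin\spec(T)$, Lemma~\ref{lem:spectral-radius} gives $\spec(\overline{T})\subseteq[\spri(T),1]\bbT$.
\item If $0\in\spec(T)$ is a pole, Lemma~\ref{lem:dounut-empty-general} gives $\spec(\overline{T})\subseteq[\spri(T),1]\bbT\cup\{0\}$.
\item If $0\in\spec(T)$ is not a pole, the bound $\spec(\overline{T})\subseteq\overline{\bbD}$ is trivial because $\spr(\overline{T})=\spr(T)=1$.
\end{itemize}

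\emph{Right column: lower bounds.} Since $\overline{T}$ is a lattice homomorphism, $\spec(\overline{T})$ is cyclic and closed.

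First, the full circle $\bbT$ lies in $\spec(\overline{T})$. Suppose not. If $0\notin\spec(T)$, Proposition~\ref{prop:hom-root-full-circle}\ref{prop:hom-root-full-circle:itm:id} makes $\overline{T}$ a root of unity, hence $T$ is a root of unity (restrict $\overline{T}^n=\id$ along $\delta_E$). If $0\in\spec(T)$, then also $0\in\spec(\overline{T})$, and part~\ref{prop:hom-root-full-circle:itm:proj} makes $\overline{T}^n$ a projection; hence $T^n$ is a projection, because $\delta_E$ is injective and $\delta_E T^{2n}=\overline{T}^{2n}\delta_E=\overline{T}^n\delta_E=\delta_E T^n$. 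Both contradict the standing assumption. This works because $\spec(\overline{T})\cap\bbT\neq\emptyset$ (it contains $\spec(T)\cap\bbT$ by Theorem~\ref{thm:spectral-inclusion}), so ``$\spec(\overline{T})\setminus\{0\}\subsetneq\bbT$'' is the right negation once we know the spectrum is contained in the appropriate annulus.

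Next, the annulus. When $\spri(T)<1$, pick $\lambda_1,\lambda_2\in\partial\spec(T)\setminus\{0\}$ with moduli arbitrarily close to $\spri(T)$ and to $1$ respectively. Boundary points are approximate eigenvalues: for radius $1$ this is automatic, and for the inner radius, in the case $0\notin\spec(T)$, a point of minimal modulus in $\spec(T)$ lies on the boundary.

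For $0\in\spec(T)$ with $0$ a pole, $\spec(T)\setminus\{0\}$ is closed, so there is a point of minimal nonzero modulus, and it is a boundary point. If $\spri(T)=0$ in this case, then $0$ is not isolated; but a pole is isolated, so in fact $\spri(T)>0$.

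Remark~\ref{rem:approx-applicable} with Theorem~\ref{thm:dounut-from-eigenvalues}\ref{thm:dounut-from-eigenvalues:itm:approx} then yields $[\modulus{\lambda_1},\modulus{\lambda_2}]\bbT\subseteq\spec(\overline{T})$. Taking closures gives $[\spri(T),1]\bbT$, plus $\{0\}$ when $0\in\spec(T)$.

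\emph{The last cell: $0$ not a pole.} Either $0$ is isolated, and then it is an essential singularity and Corollary~\ref{cor:zero-essential-singularity} gives $\overline{\bbD}$ directly. Or $0$ is not isolated, so $\spri(T)=0$ and nonzero boundary points of $\spec(T)$ accumulate at $0$. The annulus argument above, with $\modulus{\lambda_1}\to0$, then fills $\overline{\bbD}$.

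The main obstacle is the careful bookkeeping showing that points of modulus near $\spri(T)$ and $1$ can be chosen in $\appSpec(T)$, i.e.\ on $\partial\spec(T)$, in each case. The delicate case is $0\in\spec(T)$ with $0$ non-isolated, where one must check that the boundary of $\spec(T)$ contains nonzero points of arbitrarily small modulus. I would argue this by noting that every neighbourhood of $0$ meets $\spec(T)\setminus\{0\}$; if all such points were interior, then, since $\spec(T)$ is compact and nonempty, a whole punctured disc around $0$ would lie in the spectrum. But then its boundary points on circles of small radius still lie in $\spec(T)$, and in any case $\spec(T)\supseteq$ that punctured disc already yields radii accumulating at $0$ via the boundary of $\spec(T)$ near the outer radius of the disc. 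If this fails, I would fall back on Theorem~\ref{thm:spectral-inclusion} combined with cyclicity and closedness of $\spec(\overline{T})$.
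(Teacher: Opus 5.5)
Your treatment of the left column and of the first two cells of the right column follows the paper's proof. The last cell, where $0$ is a non-isolated point of $\spec(T)$, has a genuine gap.

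You assert that nonzero boundary points of $\spec(T)$ accumulate at $0$. This is false in general. For the left shift on $\ell^2$ one has $\spec(T)=\overline{\bbD}$ and $\partial\spec(T)=\bbT$. In Example~\ref{exas:inf-dim-pictures}\ref{exas:inf-dim-pictures:itm:shift-modified} one has $\spec(T)=\frac12\overline{\bbD}\cup\{1\}$, whose nonzero boundary points all have modulus at least $\frac12$.

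Your dichotomy is correct: either such boundary points exist arbitrarily close to $0$, or a punctured disc around $0$ lies in $\spec(T)$. But the second alternative cannot produce boundary points near $0$. The appeal to ``spectral inclusion, cyclicity and closedness'' is not yet an argument, and cyclicity plays no role here.

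The missing step is to treat each $\lambda\in\bbD\setminus\{0\}$ separately, as the paper does:
\begin{enumerate}
\item If $\lambda\in\spec(T)$, then $\lambda\in\spec(\overline{T})$ by Theorem~\ref{thm:spectral-inclusion}.
\item If $\lambda\notin\spec(T)$, choose an open disc $D$ around $\lambda$ contained in the resolvent set of $T$. The open connected set $\{0<\modulus{z}<\modulus{\lambda}\}\cup D$ meets $\spec(T)$, because $0$ is not isolated, and it also meets the complement of $\spec(T)$. Hence it contains a point $\mu_1\in\partial\spec(T)$, and necessarily $0<\modulus{\mu_1}<\modulus{\lambda}$. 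Applying Theorem~\ref{thm:dounut-from-eigenvalues}\ref{thm:dounut-from-eigenvalues:itm:approx} to $\mu_1$ and a spectral value of modulus $1$ gives $\lambda\in\spec(\overline{T})$.
\end{enumerate}
Equivalently, you can finish your second alternative directly. Let $\rho$ be the largest radius with $\rho\overline{\bbD}\subseteq\spec(T)$. Use spectral inclusion on $\rho\overline{\bbD}$. If $\rho<1$, a compactness argument yields a boundary point of $\spec(T)$ on $\rho\bbT$, and the annulus theorem then covers $[\rho,1]\bbT$.

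A smaller point concerns your full-circle step. It invokes Proposition~\ref{prop:hom-root-full-circle}, which needs $\spec(\overline{T})\setminus\{0\}\subseteq\bbT$. Knowing only $\bbT\not\subseteq\spec(\overline{T})$ does not give this, and your upper bounds provide it only when $\spri(T)=1$. This is harmless, since for $\spri(T)<1$ the annulus argument already contains $\bbT$. Still, the step should be restricted to the case $\spec(T)\setminus\{0\}\subseteq\bbT$, which is exactly the split the paper makes in Cases~3 and~4.
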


\begin{proof}
    We consider all five cases in the table separately. 
    \begin{description}[leftmargin=0.65cm]
        \item[Case~1:] 
        \emph{$0\not\in\spec(T)$ and $T$ is (essentially) a root of unity.}
        This is Theorem~\ref{thm:roots-of-unity-general}\ref{thm:roots-of-unity-general:itm:id}.

        \item[Case~2:] 
        \emph{$0\in\spec(T)$ and $T$ is essentially a root of unity.}
        This is Theorem~\ref{thm:roots-of-unity-general}\ref{thm:roots-of-unity-general:itm:proj}.
        
        \item[Case~3:] 
        \emph{$0\not\in\spec(T)$ and $T$ is not essentially a root of unity.}
        
        If $\spri(T) = \spr(T)$ (in other words, if $\spec(T)\subseteq \bbT$) Proposition~\ref{prop:hom-root-full-circle}\ref{prop:hom-root-full-circle:itm:id} implies $\spec(\overline{T}) = \bbT$ since $\overline{T}$ is not essentially a root of unity. 
        If $\spri(T)<\spr(T)$ the claim follows from Lemma~\ref{FBLDisjoinedness12} (for $n=2$) and Theorem~\ref{thm:dounut-from-eigenvalues}\ref{thm:dounut-from-eigenvalues:itm:approx} since every point in $\partial \spec(T)$ is an approximate eigenvalue of $T$.

        \item[Case~4:] 
        \emph{$0\in\spec(T)$, $T$ is not essentially a root of unity and $0$ is a pole of the resolvent.}

        From Lemma~\ref{lem:dounut-empty-general} we get $\spec(\overline{T})\subseteq [\spri(T),\spr(T)]\bbT\cup\{0\}$ right away, so we only have to show the converse inclusion.

        If $\spec(T) \setminus \{0\} \not\subseteq \bbT$, 
        then Theorem~\ref{thm:dounut-from-eigenvalues}\ref{thm:dounut-from-eigenvalues:itm:approx} 
        (which is applicable due to Lemma~\ref{FBLDisjoinedness12}) 
        gives $[\spri(T),\spr(T)]\bbT \subseteq \spec(\overline{T})$.
        So assume instead that $\spec(T) \setminus \{0\} \subseteq \bbT$, 
        Since $T$ is not essentially a root of unity, neither is $\overline{T}$. 
        Therefore it follows from Proposition~\ref{prop:hom-root-full-circle}\ref{prop:hom-root-full-circle:itm:proj} 
        that $\spec(\overline{T}) \setminus \{0\} = \bbT$, so $\spec(\overline{T}) = \bbT \cup \{0\}$.

        \item[Case~5:]
        \emph{$0\in\spec(T)$, $T$ is not essentially a root of unity and $0$ is not a pole of the resolvent.}

        If $0$ is an isolated in $\spec(T)$, then it is an essential singularity of the resolvent; 
        hence the claim follows directly from Corollary~\ref{cor:zero-essential-singularity}. 
        
        So suppose now that $0$ is not isolated in $\spec(T)$. 
        We show that $\bbD \subseteq \spec(\overline{T})$; this implies the claim since the spectrum is closed. 
        So assume for a contradiction that there is a number $\lambda \in \bbD \setminus \spec(\overline{T})$. 
        Then $\lambda \not= 0$. 
        There exists a number $\mu_1 \in \partial\spec(T)$ such that $0 < \modulus{\mu_1} < \modulus{\lambda}$. 
        Indeed, first note that $\lambda \not\in \spec(T)$ by Theorem~\ref{thm:spectral-inclusion}. 
        Moreover, since $0$ is a non-isolated point in $\spec(T)$, we can find a $\mu_0 \in \spec(T)$ such that $0 < \modulus{\mu_0} < \modulus{\lambda}$. 
        If we connect $\mu_0$ and $\lambda$ b< a straight line, this line contains a number $\mu_1$ with the claimed properties. 
        On the other hand, there exists a spectral value $\mu_2$ of $T$ with modulus $\modulus{\mu_2} = 1$. 
        Thus, Theorem~\ref{thm:dounut-from-eigenvalues}\ref{thm:dounut-from-eigenvalues:itm:approx} -- which is applicable due to Lemma~\ref{FBLDisjoinedness12} -- implies that $\lambda \in [\modulus{\mu_1}, \modulus{\mu_2}] \bbT \subseteq \spec(\overline{T})$, a contradiction.
        \qedhere
    \end{description}
\end{proof}

Let us consider the assertions of Theorem~\ref{thm:spec-char-general} in a few concrete examples. 

\begin{example}
    If $T \in \bbC^{n \times n}$, the fifth case in Theorem~\ref{thm:spec-char-general} (on the bottom of the right column) cannot occur, but all other cases can. 
    We sketcht the spectra of $T$ and $\overline{T}$ for a few concrete matrices in Figure~\ref{fig:spectra}.
\end{example}

\begin{figure}[t]
    \begin{minipage}[b]{0.23\textwidth}
        \centering
        \begin{adjustbox}{width=\linewidth}
            % case 1
            \begin{tikzpicture} 
                % axis
                \draw [black, ->] (-2,0) to (2,0) node[anchor=west] {$\bbR$}; 
                \draw [black, ->] (0,-1.95) to (0,2) node[anchor=south] {$\iu \bbR$}; 
                % spectrum of T
                \draw [fill] (-1,0) circle [radius=0.05];
                \draw [fill] (-0.5,0.866) circle [radius=0.05];
                % spectrum of T^_
                \draw [fill, white!50!gray] (1,0) circle [radius=0.05];
                \draw [fill, white!50!gray] (0.5,0.866) circle [radius=0.05];
                \draw [fill, white!50!gray] (-0.5,-0.866) circle [radius=0.05];
                \draw [fill, white!50!gray] (0.5,-0.866) circle [radius=0.05];
                % case 1
                \matrix [below right] at (-2,2) { \node {(a)}; \\};
                % caption
                \matrix [below] at (0,-1.95) { \node {$T = \tmatrix{-1&0\\0&\omega}$};\\};
            \end{tikzpicture}
        \end{adjustbox}
    \end{minipage}
    \begin{minipage}[b]{0.23\textwidth}
        \centering
        \begin{adjustbox}{width=\linewidth}
            % case 3 a)
            \begin{tikzpicture}
                % spectrum of T^_
                \draw [white!50!gray,fill] (0,0) circle [radius=1.04];
                \draw [white,fill] (0,0) circle [radius=0.96];
                % axis
                \draw [black, ->] (-2,0) to (2,0) node[anchor=west] {$\bbR$}; 
                \draw [black, ->] (0,-2) to (0,2) node[anchor=south] {$\iu \bbR$}; 
                % spectrum of T
                \draw [fill] (0.54,0.841) circle [radius=0.05];
                % case 3a
                \matrix [below right] at (-2,2) { \node {(c)}; \\ };
                % caption
                \matrix [below] at (0,-2) { \node {$T = \tmatrix{e^\iu}$}; \\};
            \end{tikzpicture}
        \end{adjustbox}
    \end{minipage}
    \begin{minipage}[b]{0.23\textwidth}
        \centering
        \begin{adjustbox}{width=\linewidth}
            % case 3 b)
            \begin{tikzpicture}
                % spectrum of T^_
                \draw [white!50!gray,fill] (0,0) circle [radius=1.04];
                \draw [white,fill] (0,0) circle [radius=0.96];
                % axis
                \draw [black, ->] (-2,0) to (2,0) node[anchor=west] {$\bbR$}; 
                \draw [black, ->] (0,-2) to (0,2) node[anchor=south] {$\iu \bbR$}; 
                % spectrum of T
                \draw [fill] (1,0) circle [radius=0.05];
                % case 3b
                \matrix [below right] at (-2,2) { \node {(e)}; \\ };
                % caption
                \matrix [below] at (0,-2) { \node {$T = \tmatrix{1&1\\0&1}$}; \\};
            \end{tikzpicture}
        \end{adjustbox}
    \end{minipage}
    \begin{minipage}[b]{0.23\textwidth}
        \centering
        \begin{adjustbox}{width=\linewidth}
            % case 3 c)
            \begin{tikzpicture}
                % spectrum of T^_
                \draw [white!50!gray,fill] (0,0) circle [radius=1.5];
                \draw [white,fill] (0,0) circle [radius=0.75];
                % axis
                \draw [black, ->] (-2,0) to (2,0) node[anchor=west] {$\bbR$}; 
                \draw [black, ->] (0,-2) to (0,2) node[anchor=south] {$\iu \bbR$}; 
                % spectrum of T
                \draw [fill] (0.75,0) circle [radius=0.05];
                \draw [fill] (1.5,0) circle [radius=0.05];
                % case 3c
                \matrix [below right] at (-2,2) { \node {(f)}; \\ };
                % caption
                \matrix [below] at (0,-2) { \node {$T = \tmatrix{1&0\\0&2}$}; \\};
            \end{tikzpicture}
        \end{adjustbox}
    \end{minipage}
    
    \begin{minipage}[b]{0.23\textwidth}
        \centering
        \begin{adjustbox}{width=\linewidth}
            % case 2
            \begin{tikzpicture}
                % axis
                \draw [black, ->] (-2,0) to (2,0) node[anchor=west] {$\bbR$}; 
                \draw [black, ->] (0,-1.8) to (0,2) node[anchor=south] {$\iu \bbR$}; 
                % spectrum of T
                \draw [fill] (0,0) circle [radius=0.05];
                \draw [fill] (-1,0) circle [radius=0.05];
                \draw [fill] (-0.5,0.866) circle [radius=0.05];
                % spectrum of T^_
                \draw [fill, white!50!gray] (1,0) circle [radius=0.05];
                \draw [fill, white!50!gray] (0.5,0.866) circle [radius=0.05];
                \draw [fill, white!50!gray] (-0.5,-0.866) circle [radius=0.05];
                \draw [fill, white!50!gray] (0.5,-0.866) circle [radius=0.05];
                % case 2
                \matrix [below right] at (-2,2) { \node {(b)}; \\};
                % caption
                \matrix [below] at (0,-1.8) { \node {$T = \tmatrix{0&0&0&\\0&-1&0\\0&0&\omega}$};\\};
            \end{tikzpicture}
        \end{adjustbox}
    \end{minipage}
    \begin{minipage}[b]{0.23\textwidth}
        \centering
        \begin{adjustbox}{width=\linewidth}
            % case 4 a)
            \begin{tikzpicture}
                % spectrum of T^_
                \draw [white!50!gray,fill] (0,0) circle [radius=1.04];
                \draw [white,fill] (0,0) circle [radius=0.96];
                % axis
                \draw [black, ->] (-2,0) to (2,0) node[anchor=west] {$\bbR$}; 
                \draw [black, ->] (0,-2) to (0,2) node[anchor=south] {$\iu \bbR$}; 
                % spectrum of T
                \draw [fill] (0,0) circle [radius=0.05];
                \draw [fill] (0.54,0.841) circle [radius=0.05];
                % case 4a
                \matrix [below right] at (-2,2) { \node {(d)}; \\ };
                % caption
                \matrix [below] at (0,-2) { \node {$T = \tmatrix{0&0\\0&e^\iu}$}; \\};
            \end{tikzpicture}
        \end{adjustbox}
    \end{minipage}
    \begin{minipage}[b]{0.23\textwidth}
        \centering
        \begin{adjustbox}{width=\linewidth}
            % case 4 b)
            \begin{tikzpicture}
                % spectrum of T^_
                \draw [white!50!gray,fill] (0,0) circle [radius=1.04];
                \draw [white,fill] (0,0) circle [radius=0.96];
                % axis
                \draw [black, ->] (-2,0) to (2,0) node[anchor=west] {$\bbR$}; 
                \draw [black, ->] (0,-1.8) to (0,2) node[anchor=south] {$\iu \bbR$}; 
                % spectrum of T
                \draw [fill] (0,0) circle [radius=0.05];
                \draw [fill] (1,0) circle [radius=0.05];
                % case 4b
                \matrix [below right] at (-2,2) { \node {(f)}; \\ };
                % caption
                \matrix [below] at (0,-1.8) { \node {$T = \tmatrix{0&0&0\\0&1&1\\0&0&1}$}; \\};
            \end{tikzpicture}
        \end{adjustbox}
    \end{minipage}
    \begin{minipage}[b]{0.23\textwidth}
        \centering
        \begin{adjustbox}{width=\linewidth}
            % case 4 c)
            \begin{tikzpicture}
                % spectrum of T^_
                \draw [white!50!gray,fill] (0,0) circle [radius=1.5];
                \draw [white,fill] (0,0) circle [radius=0.75];
                % axis
                \draw [black, ->] (-2,0) to (2,0) node[anchor=west] {$\bbR$}; 
                \draw [black, ->] (0,-1.8) to (0,2) node[anchor=south] {$\iu \bbR$}; 
                % spectrum of T
                \draw [fill] (0,0) circle [radius=0.05];
                \draw [fill] (0.75,0) circle [radius=0.05];
                \draw [fill] (1.5,0) circle [radius=0.05];
                % case 4c
                \matrix [below right] at (-2,2) { \node {(h)}; \\ };
                % caption
                \matrix [below] at (0,-1.8) { \node {$T = \tmatrix{0&0&0\\0&1&0\\0&0&2}$}; \\};
            \end{tikzpicture}
        \end{adjustbox}
    \end{minipage}
    \caption{\label{fig:spectra}
        In each figure $\spec(T)$ is depicted as black dots and $\spec(\overline{T})$ is depicted in gray. 
        In~(a) and~(b) we use the number $\omega \coloneqq e^{\frac{2\pi\iu}{3}}$.
    }
\end{figure}

Finally, let us also consider several infinite-dimensional examples.

\begin{examples}
    \label{exas:inf-dim-pictures}
    \leavevmode
    \begin{enumerate}[label=(\alph*)]
        \item\label{exas:inf-dim-pictures:itm:volterra}
        Consider the Volterra operator $V$ on $L^2([0,1])$ that is given by $(Vf)(y) = \int_0^y f(x) \dx x$. 
        It has spectrum $\spec(V) = \{0\}$ and $0$ is an essential singularity of the resolvent. 
        Therefore the block diagonal operator $T:L^2([0,1])\times\bbC\to L^2([0,1])\times\bbC$ 
        with block entries $V$ and $\id_\bbC$ has spectrum $\spec(T) = \{0,1\}$, 
        and again $0$ is an essential singularity of its resolvent. 
        The spectra of $T$ and $\overline{T}$ are shown in Figure~\ref{fig:essential-singularity}(a).

        \item\label{exas:inf-dim-pictures:itm:multiplication}
        Let $p \in [1,\infty]$ and let $T \colon \ell^p \to \ell^p$ be given by 
        $Tx = (\tfrac{1}{n}x_n)$ for each $x = (x_n) \in \ell^p$.
        Its spectrum is $\spec(T) = \pntSpec(T) = \{\tfrac{1}{n}\mid n\in\bbN\}$
        Both $\spec(T)$ and $\spec(\overline{T})$ are shown in Figure~\ref{fig:essential-singularity}(b).
        
        \item\label{exas:inf-dim-pictures:itm:shift-modified}
        Let $p \in [1,\infty]$ and let $T:\ell^p\to\ell^p$ be defined by $Tx = \tfrac12 (2x_1,0,x_2,x_3,x_4,\dots)$. 
        It spectrum is $\spec(T) = \frac{1}{2}\overline{\bbD} \cup \{1\}$. 
        Both $\spec(T)$ and $\spec(\overline{T})$ are shown in Figure~\ref{fig:essential-singularity}(c).
    \end{enumerate}
\end{examples}

\begin{figure}[t]
        \begin{minipage}[b]{0.31\textwidth}
            \centering
            \begin{adjustbox}{width=\linewidth}
                % case 1
                \begin{tikzpicture} 
                    % spectrum of T^_
                    \draw [fill, white!50!gray] (0,0) circle [radius=1];
                    % axis
                    \draw [black, ->] (-2,0) to (2,0) node[anchor=west] {$\bbR$}; 
                    \draw [black, ->] (0,-1.95) to (0,2) node[anchor=south] {$\iu \bbR$}; 
                    % spectrum of T
                    \draw [fill] (0,0) circle [radius=0.05];
                    \draw [fill] (1,0) circle [radius=0.05];
                    % case 1
                    \matrix [below right] at (-2,2) { \node {(a)}; \\};
                    % caption
                    %\matrix [below] at (0,-1.95) { \node {$T$};\\};
                \end{tikzpicture}
            \end{adjustbox}
        \end{minipage}
        % case 2
        \begin{minipage}[b]{0.31\textwidth}
            \centering
            \begin{adjustbox}{width=\linewidth}
            \begin{tikzpicture} 
                % spectrum of T^_
                \draw [fill, white!50!gray] (0,0) circle [radius=1];
                % axis
                \draw [black, ->] (-2,0) to (2,0) node[anchor=west] {$\bbR$}; 
                \draw [black, ->] (0,-1.95) to (0,2) node[anchor=south] {$\iu \bbR$}; 
                % spectrum of T
                \draw [fill] (0,0) circle [radius=0.05];
                \draw [fill] (1,0) circle [radius=0.05];
                \draw [fill] (1/2,0) circle [radius=0.05];
                \draw [fill] (1/3,0) circle [radius=0.05];
                \draw [fill] (1/4,0) circle [radius=0.05];
                \draw [fill] (1/5,0) circle [radius=0.05];
                \draw [fill] (1/6,0) circle [radius=0.05];
                \draw [fill] (1/7,0) circle [radius=0.05];
                \draw [line width = 1.15mm, black] (0,0) to (1/8,0);
                % case 1
                \matrix [below right] at (-2,2) { \node {(b)}; \\};
                % caption
                %\matrix [below] at (0,-1.95) { \node {$S$};\\};
            \end{tikzpicture}
            \end{adjustbox}
        \end{minipage}
        % case 3
        \begin{minipage}[b]{0.31\textwidth}
            \centering
            \begin{adjustbox}{width=\linewidth}
                \begin{tikzpicture}
                    % spectrum of T^_
                    \draw [fill, white!50!gray] (0,0) circle [radius=1];
                    % axis
                    \draw [black, ->] (-2,0) to (2,0) node[anchor=west] {$\bbR$}; 
                    \draw [black, ->] (0,-1.95) to (0,2) node[anchor=south] {$\iu \bbR$}; 
                    % spectrum of T
                    \draw [fill, black] (0,0) circle [radius=1/2];
                    \draw [fill] (1,0) circle [radius=0.05];
                    % case 1
                    \matrix [below right] at (-2,2) { \node {(c)}; \\};
                    % caption
                    %\matrix [below] at (0,-1.95) { \node {$R$};\\};
                \end{tikzpicture}
            \end{adjustbox}
        \end{minipage}
    \caption{\label{fig:essential-singularity}
        Spectra of the operators from Examples~\ref{exas:inf-dim-pictures}.
        In each figure $\spec(T)$ is depicted as black dots and $\spec(\overline{T})$ is depicted in gray.
    }
\end{figure}

\subsection*{Acknowledgements} 
Phillip Krokor was funded by the Deutsche Forschungsgemeinschaft (DFG, German Research Foundation) -- 541471251.

\bibliographystyle{plainurl}
\bibliography{literature}

\end{document}